\renewcommand*{\backref}[1]{}
\renewcommand*{\backrefalt}[4]{\quad \tiny 
  \ifcase #1 (\textbf{\color{red}NOT CITED.})%
  \or    (Cited on page~#2.)%
  \else   (Cited on pages~#2.)%
  \fi}
\newcommand{\doi}[1]{\href{http://dx.doi.org/#1}{DOI}} 
\numberwithin{equation}{section}
\declaretheorem[numberwithin=section]{theorem} 
\declaretheorem[sibling=theorem]{proposition} 
\declaretheorem[sibling=theorem]{lemma}
\declaretheorem[sibling=theorem]{corollary}
\declaretheorem[sibling=theorem, style=definition]{remark}
\declaretheorem[sibling=theorem, style=definition]{example}
\def\R{\mathbb R}
\def\Z{\mathbb Z}
\def\<{\langle}
\def\>{\rangle}
\def\tilde{\widetilde}
\begin{document}

\title[Regularity of cocycles and Baire category]{Complete regularity of linear cocycles and the Baire category of the set of Lyapunov-Perron regular points}

\author{Jairo Bochi}
\address{Department of Mathematics \\ The Pennsylvania State University \\ University Park, PA 16802, USA}
\email{bochi@psu.edu}

\author{Yakov Pesin}
\address{Department of Mathematics \\ The Pennsylvania State University \\ University Park, PA 16802, USA}
\email{pesin@math.psu.edu}

\author{Omri Sarig}
\address{Department of Mathematics \\ Faculty of Mathematics and Computer Sciences\\ Weizmann Institute of Science \\ 234 Herzl Street, Rehovot 7610001 Israel}
\email{omri.sarig@weizmann.ac.il}

\subjclass[2020]{37D25; 37D30}

\begin{abstract}
Given a continuous linear cocycle $\mathcal{A}$ over a homeomorphism $f$ of a compact metric space $X$, we investigate its set $\mathcal{R}$ of Lyapunov-Perron regular points, that is, the collection of  trajectories of $f$ that obey the conclusions of the Multiplicative Ergodic Theorem. We obtain results roughly saying that the set $\mathcal{R}$ is of first Baire category (i.e., meager) in $X$, unless some rigid structure is present. In some settings, this rigid structure forces the Lyapunov exponents to be defined everywhere and to be independent of the point; that is what we call complete regularity. 
\end{abstract}

\date{September 3, 2024. This revision: January 20, 2026.}

\maketitle

\vspace*{-5mm} 

\setcounter{tocdepth}{2}
\tableofcontents

\date{\today}

\section{Introduction} \label{s.intro}

Linear cocycles consist of products of matrices driven by some dynamical system. A precise description of the asymptotics of these products is available for those orbits
that are Lyapunov-Perron (or LP) regular. The multiplicative ergodic theorem of Oseledets says that the set $\mathcal{R}$ of LP-regular points is always large in a measure-theoretical sense: it has full measure with respect to any invariant probability measure. However, in many situations, the set $\mathcal{R}$ is of first Baire category (i.e., meager) and so is small in a topological sense. Manifestations of this phenomenon in specific settings can be found in the works \cite{Tian,FerreiraV,Varandas}. As for derivative cocycles, \cite[Theorem~3.14]{ABC} asserts that $\mathcal{R}$ is a meager set for $C^1$-generic diffeomorphisms. 

Our objective is to investigate the Baire category of the set $\mathcal{R}$ of Lyapunov-Perron regular points. More specifically, we aim to identify sufficient conditions under which $\mathcal{R}$ is meager. We want these conditions to be as broadly applicable as possible.

As a guiding principle, if the set $\mathcal{R}$ is non-meager, then some underlying rigid structure should account for this fact. Here is a result that corroborates this principle (Corollary~\ref{cor-criterion}): If the dynamics has the property that stable (or unstable) sets of periodic orbits are dense, then for every continuous cocycle, the set $\mathcal{R}$ of LP-regular points is meager, unless all periodic orbits have exactly the same Lyapunov spectrum. The hypothesis on the dynamics in Corollary~\ref{cor-criterion} is quite general (allowing for nonuniformly hyperbolic maps, for instance), and there are no regularity assumptions on the cocycle other than continuity. 

Under the strong assumption of minimality of the base dynamics, we obtain an even sharper dichotomy (Theorem~\ref{t.dichotomy_minimal}): for every continuous cocycle, the set $\mathcal{R}$ of LP-regular points is either meager or it is the whole space. Furthermore, in the latter case, the cocycle will have extra properties that we call \emph{complete regularity}.

Given a continuous linear cocycle $\mathcal{A}$ over any topological dynamical system $f$, complete regularity is defined by two requirements. They are: (1) every trajectory of $f$ is LP-regular, (2) the Lyapunov spectrum of $\mathcal{A}$ is independent of the point~$x$. A restriction of a cocycle $\mathcal{A}$ to a periodic orbit of $f$ is a simple example of a completely regular cocycle. 

If a cocycle is completely regular, then the Oseledets splitting is defined everywhere. We prove (Corollary~\ref{c.DS}) that the Oseledets splitting is uniformly dominated and, in particular, continuous. 

It becomes apparent that complete regularity is pertinent to our study of the Baire category of the set $\mathcal{R}$ of LP-regular points. Our most general criterion for meagerness of $\mathcal{R}$ (Theorem~\ref{thm-irreg-criterion}) is actually formulated in terms of complete regularity. If the base dynamics $f$ is hyperbolic and the cocycle $\mathcal{A}$ is H\"older continuous, then we have a sharp dichotomy again (Theorem~\ref{t.dichotomy}): either $\mathcal{R}$ is meager or the cocycle is completely regular.

Complete regularity can be characterized in terms of the Sacker-Sell spectrum of the cocycle: see \cite{JohnZamp} and the discussion in Section~\ref{ss.SS} below. 

Thus, motivated to understand the notion of complete regularity, we investigated whether the two requirements in the definition of complete regularity are independent. The short answer is yes: Section~\ref{s.independence} exhibits examples of cocycles where one of the two conditions is satisfied but not the other. These examples may be of interest in other contexts.

\section{Preliminaries} \label{e.prelim}

In this section, we recall some basic notions about linear cocycles and Lyapunov exponents. 
Despite the fact that much of the material of this section makes sense in a measurable setting, we will assume continuity and compactness throughout.

\subsection{Linear cocycles over continuous transformations} 

Let $X$ be a topological space and $f: X\to X$ a homeomorphism. A function $\mathcal{A}=\mathcal{A}(x,n)$, $x\in X$ and $n\in\mathbb{Z}$ with values in $\mathrm{GL}(d,\mathbb{R})$ is called a \emph{continuous linear cocycle} if the following properties hold:
\begin{enumerate}
\item for each $x\in X$, we have $\mathcal{A}(x,0)=\mathrm{Id}$, and given $n,k\in\mathbb{Z}$,
\[
\mathcal{A}(x, n+k)=\mathcal{A}(f^k(x), n)\mathcal{A}(x, k);
\]
\item for every $n\in\mathbb{Z}$, the function $\mathcal{A}(\cdot,n)\colon X\to \mathrm{GL}(d,\mathbb{R})$ is continuous.
\end{enumerate}

Given a continuous function $A\colon X\to \mathrm{GL}(d,\mathbb{R})$, we define a cocycle by
\[
\mathcal{A}(x, n) \coloneqq 
\begin{cases}
A(f^{n-1}(x))\cdots A(f(x))A(x) &\text{if $n>0$}\\ \text{Id} &\text{if $n=0$},\\
{A(f^n(x))}^{-1}\cdots {A(f^{-2}(x))}^{-1}{A(f^{-1}(x))}^{-1}
&\text{if $n<0$}.
\end{cases}
\]
We call the map $A(x)$ the \emph{generator} of the cocycle $\mathcal{A}(x,n)$ and we say that the cocycle is \emph{generated} by the function $A(x)$. Every cocycle $\mathcal{A}(x,n)$ is generated by the function $A(x) \coloneqq \mathcal{A}(x,1)$.

\subsection{Lyapunov exponents and regularity of cocycles}

Given a continuous cocycle $\mathcal{A}$ over the homeomorphism $f$, we say that a point $x \in X$ is \emph{regular} if
there exist a list of numbers 
\begin{equation}\label{e.LE}
\chi_1(x)>\chi_2(x)>\cdots>\chi_{s(x)}(x)
\end{equation}
and a splitting
\begin{equation}\label{e.Oseledets}
\mathbb{R}^d=\bigoplus_{i=1}^{s(x)}E_i(x)
\end{equation}
such that the following conditions are satisfied:
\begin{itemize}
\item for every $i$ with $1 \le i \le s(x)$, the two-sided sequence 
\[
\frac1n \log\|\mathcal{A}(x,n)v\|
\]
converges to $\chi_i(x)$ as $n \to \pm \infty$ uniformly in $v \in S^{d-1} \cap E_i(x)$;
\item for every partition of the set $\{1,\dots,s(x)\}$ into two nonempty (disjoint) subsets $I$, $J$, we have
\[
\lim_{n \to \pm \infty} \frac{1}{|n|} \log \measuredangle \left( 
\mathcal{A}(x,n) \left(\bigoplus_{i \in I} E_i(x)\right), 
\mathcal{A}(x,n) \left(\bigoplus_{j \in J} E_j(x)\right) \right) = 0 \, ,
\]
where the angle $\measuredangle(E,F)$ between two subspaces is defined as the least nonnegative angle between two vectors in the corresponding spaces.
\end{itemize}

If $x$ is a regular point, it can be shown that the \emph{Lyapunov exponents} \eqref{e.LE} and the \emph{Oseledets splitting} \eqref{e.Oseledets} are uniquely defined. Let $\mathcal{R}$ denote the set of regular points. It can be shown that $\mathcal{R}$ is a Borel set and that the number $s(x)$, the Lyapunov exponents $\chi_i(x)$, and Oseledets spaces $E_i(x)$ all depend Borel measurably on $x$, and that they satisfy the invariance relations:
\[
s(f^n(x)) = s(x) \, , \quad 
\chi_i(f^n(x)) = \chi_i(x) \, , \quad 
E_i(f^n(x)) = \mathcal{A}(x,n) E_i(x) \, .
\]
According to the Multiplicative Ergodic Theorem of Oseledets, $\mu(\mathcal{R})=1$ for every $f$-invariant Borel probability measure $\mu$ on $X$. If $\mu$ is ergodic, then the Lyapunov exponent $\chi_i(x)$ is the same for $\mu$-almost every $x$, and this common value is denoted $\chi_i(\mu)$.

The notion of regularity as defined above is equivalent to \emph{Lyapunov-Perron regularity} (or \emph{LP regularity}). The latter requires that the cocycle $\mathcal A$ must be forward and backward regular. We refer the reader to \cite{BP} for these definitions. We note that our requirements \eqref{e.LE} and \eqref{e.Oseledets} imply that every regular point is both forward and backwards regular; we stress that simultaneous forward and backward regularity does not imply regularity.

Here are some other properties that we will need: 
if $x$ is forward regular, then 
\begin{align}
\label{e.oseledets_norm}
\lim_{n\to+\infty}\frac1n\log\|\mathcal{A}(x,n)\| &= \chi_1(x) \, , \quad \text{and} \\
\label{e.oseledets_conorm}
\lim_{n\to+\infty}\frac1n\log m(\mathcal{A}(x,n))&=\chi_{s(x)}(x) \, , 
\end{align}
where 
\[
m(L) \coloneqq \|L^{-1}\|^{-1}
\] 
is the \emph{co-norm} of the linear map $L$.
Furthermore, if $x \in \mathcal{R}$,
\begin{align}
\label{e.oseledets_norm_subbundle}
\lim_{n\to\pm\infty}\frac1n\log\|\mathcal{A}(x,n)|E_i(x)\| &= \chi_i(x) \, , \quad \text{and} \\
\label{e.oseledets_conorm_subbundle}
\lim_{n\to\pm\infty}\frac1n\log m(\mathcal{A}(x,n)|E_i(x))&=\chi_i(x) \, .
\end{align}

Given a regular point $x \in \mathcal{R}$, the dimension $d_i(x)$ of the subbundle $E_i(x)$ is called the \emph{multiplicity} of the Lyapunov exponent $\chi_i(x)$.
The collection 
\[
\mathrm{Sp}(x) \coloneqq \big\{(\chi_i(x),d_i(x)) \  :  \  1 \le i \le s(x)\big\}
\]
is called the \emph{Lyapunov spectrum} of the cocycle $\mathcal{A}$ at $x$. 
Sometimes we will also use the \emph{list} of values of the Lyapunov exponent
\[\tilde\chi_1(x)\ge\cdots\ge\tilde\chi_d(x)\] in which distinct values $\chi_i(x)$ are repeated according to their multiplicities $d_i(x)$, $1\le i\le s(x)$.
When we want to stress the dependence on the cocycle, we use more explicit notations such as
$\mathrm{Sp}(\mathcal{A},x)$,  $\chi_i(\mathcal{A},x)$, etc.

\subsection{Cohomology}

Let $\mathcal{A}$ and $\mathcal{B}$ be two cocycles over the same homeomorphism $f$, and of the same dimension. We say that they are \emph{continuously cohomologous} if there exists a continuous conjugacy between them, that is, a continuous map $C \colon X \to \mathrm{GL}(d,\R)$ such that 
\[
\mathcal{A}(x,n) = C(f^n x)^{-1} \mathcal{B}(x,n) C(x) \, .
\]
Since $C$ is continuous, it is tempered, and hence continuously cohomologous cocycles have the same set of Lyapunov--Perron regular points, and the same Lyapunov spectra. 

\subsection{The exterior power of cocycles} \label{ss.exterior}

Consider the vector space of all alternating $i$-forms on $\R^d$, that is, the space of all multilinear maps from $\R^d \times \cdots \times \R^d$ (with $i$ copies) to $\R$ which are alternating. 
The dual of this vector space is denoted $(\mathbb{R}^d)^{\wedge i}$. 
There exists a canonical alternating multilinear map 
\[
(v_1, \dots, v_i) \in \R^d \times \cdots \times \R^d \mapsto v_1\wedge\cdots\wedge v_i \in (\mathbb{R}^d)^{\wedge i} \, , 
\]
called the \emph{exterior product}.
Given a linear operator $L$ of 
$\mathbb{R}^d$, we define the \emph{$i$-fold exterior power}  $L^{\wedge i}$ as the unique linear operator of $(\mathbb{R}^d)^{\wedge i}$ such that
\[
L^{\wedge i} (v_1\wedge\cdots\wedge v_i)=(Lv_1\wedge\cdots\wedge Lv_i) \, .
\]

Let $\mathcal{A}$ be a cocycle over a homeomorphism $f$ of a topological space $X$. For each $i=1,\dots,n$ consider the cocycle 
$\mathcal{A}^{\wedge i} \colon X\times\Z\to \mathrm{GL}(n,\mathbb{R})^{\wedge i}$ defined by
$\mathcal{A}^{\wedge i}(x,n) = \mathcal{A}(x,n)^{\wedge i}$.
We call $\mathcal{A}^{\wedge i}$ the \emph{$i$-fold exterior power cocycle} of $\mathcal{A}$.

One can show (see \cite[p.~211]{Ar}, \cite[Section 3.1]{BP}, or \cite{Rag}) that 
\begin{enumerate}
\item if $x\in X$ is a LP-regular point for the cocycle $\mathcal{A}$, then it is also a LP-regular point for the exterior power cocycle $\mathcal{A}^{\wedge i}$;
\item the list of values of the Lyapunov exponents of the cocycle $\mathcal{A}^{\wedge i}$ is 
\begin{equation}\label{e.ex-power-lp}
\{\tilde\chi_k(\mathcal{A}^{\wedge i},x)\} = \big\{\tilde\chi_{j_1}(\mathcal{A},x)+\cdots+\tilde\chi_{j_i}(\mathcal{A},x) \ : \ j_1>\cdots >j_i \big\};
\end{equation} 
in particular,
\begin{equation}\label{e.power-top}
\tilde\chi_1(\mathcal{A}^{\wedge i},x) = \tilde\chi_{1}(\mathcal{A},x)+\cdots+\tilde\chi_{i}(\mathcal{A},x) \, .
\end{equation}
\end{enumerate}

\subsection{Uniform splittings}\label{ss.dominations}

Let $f$ be a homeomorphism of a compact metric space $X$.
Let $\mathcal{A}$ be a continuous linear cocycle over $f$.
Suppose that for each $x \in X$, we are given a splitting 
\begin{equation}\label{e.DS}
\R^d=\bigoplus_{i=1}^s E_i(x) 
\end{equation}
where each $\dim E_i(x)$ is independent of the point.
Furthermore, assume that this splitting is $\mathcal{A}$-invariant in the sense that $\mathcal{A}(x,n) E_i(x) = E_i(f^n(x))$.
We say that splitting is \emph{dominated} if there exists $n_0>0$ such that for every point $x \in X$ and each $i$ with $0 < i < s$, we have:
\begin{equation}\label{e.def_domination}
m \big( \mathcal{A}(x,n_0)|E_i(x) \big) > 2 \big \|  \mathcal{A}(x,n_0)|E_{i+1}(x) \big\|  
\, .
\end{equation}
Equivalently, for all choices of unit vectors $v_i\in E_i(x)$, we have:
\[
\|\mathcal{A}(x,n_0)v_i\| > 2 \|\mathcal{A}(x,n_0)v_{i+1}\|  
\, .
\]
We say that the subbundle $E_i$ \emph{dominates} the subbundle $E_{i+1}$.

Let us emphasize that domination is a \emph{uniform} property: the amount of time $n_0$ needed for the validity of the inequalities above is the same for every point $x$. 
In fact, this assumption is strong enough to force continuity (see \cite[Section B.1]{BDV} or \cite[Section 2.1]{CroPo}):

\begin{proposition}\label{p.DS_is_cont}
If \eqref{e.DS} is a dominated splitting, then each subbundle $E_i(x)$ depends continuously on $x \in X$.
\end{proposition}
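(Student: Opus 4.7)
The plan is to prove continuity by a closed-graph argument. The ingredients are: uniform transversality of the splitting (a consequence of iterating \eqref{e.def_domination} and the compactness of $X$), the closedness of the domination condition under Grassmannian limits, and the uniqueness of a dominated splitting of given dimensions along an invariant orbit.

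\textbf{Uniform transversality.} Since $\mathcal{A}(\cdot, n_0)$ and its inverse are continuous on the compact space $X$, their norms are bounded by some $M > 0$, and in particular $m(\mathcal{A}(\cdot, n_0)) \ge 1/M$. The pairwise angle $\measuredangle(E_i(x), E_{i+1}(x))$ is bounded below uniformly in $x$: otherwise, choose $x_n \in X$ and unit vectors $u_n \in E_i(x_n)$, $w_n \in E_{i+1}(x_n)$ with $\|u_n - w_n\| \to 0$; then
\[
\big|\|\mathcal{A}(x_n, n_0) u_n\| - \|\mathcal{A}(x_n, n_0) w_n\|\big| \le M \|u_n - w_n\| \to 0,
\]
but \eqref{e.def_domination} gives $\|\mathcal{A}(x_n, n_0) u_n\| - \|\mathcal{A}(x_n, n_0) w_n\| > \|\mathcal{A}(x_n, n_0) w_n\| \ge 1/M$, a contradiction. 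An inductive extension of this argument---applied not merely to consecutive pairs $(E_i, E_{i+1})$ but to each coarser splitting of the form $(\bigoplus_{i \le k} E_i) \oplus (\bigoplus_{i > k} E_i)$, which is itself seen to be dominated after sufficiently many iterates of \eqref{e.def_domination}---yields a uniform bound $\measuredangle(E_i(x), \bigoplus_{j \ne i} E_j(x)) \ge \theta_0 > 0$.

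\textbf{Closed graph and uniqueness.} Let $x_n \to x$. By compactness of the Grassmannians and passing to a subsequence, $E_i(x_n) \to \tilde E_i$ for every $i$, with $\dim \tilde E_i = \dim E_i(x)$. The uniform transversality bound passes to the limit, giving $\R^d = \bigoplus_i \tilde E_i$. A diagonal extraction over $f^k(x_n) \to f^k(x)$, $k \in \Z$, combined with the continuity of $\mathcal{A}(\cdot, k)$, produces an $\mathcal{A}$-invariant limit splitting along the orbit of $x$. Since \eqref{e.def_domination} is a closed condition on tuples of subspaces of fixed dimensions, it continues to hold in the limit. Invoking the uniqueness of a dominated splitting of prescribed dimensions along an invariant orbit, $\tilde E_i = E_i(x)$ for each $i$, and since every subsequential Grassmannian limit of $E_i(x_n)$ coincides with $E_i(x)$, continuity follows.

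\textbf{Main obstacles.} Two steps require the most care. First, the promotion of pairwise angle bounds to the full uniform transversality $\measuredangle(E_i, \bigoplus_{j \ne i} E_j) \ge \theta_0$ is not automatic---pairwise angle bounds do not in general imply bounded projectors---and must be obtained by treating each coarser two-subbundle splitting $F_k \oplus G_k$, establishing that it is dominated in its own right, and invoking the basic two-subbundle angle estimate from the previous paragraph. Second, the uniqueness of a dominated splitting of prescribed dimensions along an invariant orbit, which underlies the identification $\tilde E_i = E_i(x)$, is classical but non-trivial; it is typically proved by showing that every vector not in $\bigoplus_{j > i} E_j$ has the direction of its forward iterates asymptotic to $\bigoplus_{j \le i} E_j$, with a dual statement under backward iteration, which rules out the coexistence of two distinct dominated splittings of matching dimensions.
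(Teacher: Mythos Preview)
The paper does not supply a proof of this proposition; it states the result and refers to \cite[Section~B.1]{BDV} and \cite[Section~2.1]{CroPo}. Your closed-graph argument is one of the standard routes and is correct. Two remarks.

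First, the promotion from pairwise angle bounds to full transversality hides a potential circularity that your sketch does not quite resolve: showing that the coarser splitting $F_k \oplus G_k$ is dominated already requires projector bounds within $F_k$ and $G_k$, which is precisely what you are after. The clean way to break the circle is by induction on the number $s$ of bundles: merge $E_{s-1}$ and $E_s$ (whose mutual angle you have already controlled by the two-bundle argument) into a single bundle; since $\|\mathcal{A}(\cdot,N)|E_{s-1}\oplus E_s\| \le C\,\|\mathcal{A}(\cdot,N)|E_{s-1}\|$ with $C$ depending only on that angle bound, iterating \eqref{e.def_domination} makes the resulting $(s-1)$-bundle splitting dominated, and the inductive hypothesis finishes.

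Second, the diagonal extraction over $k \in \Z$ is unnecessary: once $E_i(x_n) \to \tilde E_i$ along a subsequence, the invariance $E_i(f^k x_n) = \mathcal{A}(x_n, k) E_i(x_n)$ together with continuity of $\mathcal{A}(\cdot, k)$ forces $E_i(f^k x_n) \to \mathcal{A}(x, k) \tilde E_i$ for every fixed $k$ along the same subsequence, so the limit splitting is automatically $\mathcal{A}$-invariant along the orbit of $x$.
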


In particular, the angle between different subbundles of a dominated splitting is bounded below. Actually, for every partition of the set $\{1,\dots,s\}$ into two nonempty (disjoint) subsets $I$, $J$, we have
\[
\inf_{x \in X} \measuredangle \left( 
\bigoplus_{i \in I} E_i(x) , 
\bigoplus_{j \in J} E_j(x) \right) > 0 \, .
\]

\smallskip

Let us introduce two other uniform notions related to domination. 
Again, let $\mathcal{A}$ be a continuous linear cocycle over $f$, and suppose we are given an $\mathcal{A}$-invariant splitting \eqref{e.DS} into subspaces of constant dimensions. We say that the splitting is \emph{absolutely dominated} if there exists $n_0>0$ such that for all pairs of points $x,y \in X$ and each $i$ with $0 < i < s$, 
\[
m \big( \mathcal{A}(x,n_0)|E_i(x) \big) > 2 \big \|  \mathcal{A}(y,n_0)|E_{i+1}(y) \big\|  
\, .
\]
In particular, the splitting is dominated. 

Finally, an $\mathcal{A}$-invariant splitting $\R^d = E^\mathrm{u}(x) \oplus E^\mathrm{s}(x)$ into two subbundles of constant dimensions is called \emph{uniformly hyperbolic} if there exists $n_0>0$ such that for all point $x \in X$, 
\[
m \big (\mathcal{A}(x,n_0)|E^\mathrm{u}(x) \big) > 2 > \frac{1}{2} > \big \|  \mathcal{A}(x,n_0)|E^\mathrm{s}(x) \big\|  \, .
\]
Note that a uniformly hyperbolic splitting is always absolutely dominated and, in particular, continuous. 

We will need the following characterization of uniform hyperbolicity:

\begin{proposition}\label{p.Mane}
Let $f$ be a homeomorphism of a compact metric space~$X$.
Let $\mathcal{A}$ be a continuous linear cocycle over $f$.	
For each $x \in X$, define subspaces
\begin{align*}
E^\mathrm{u}(x) &\coloneq \left\{v \in \R^d \  : \sup_{n \ge 0} \|\mathcal{A}(x,n) v\| < \infty \right\} \, , \\
E^\mathrm{s}(x) &\coloneq \left\{v \in \R^d \  : \sup_{n \ge 0} \|\mathcal{A}(x,-n) v\| < \infty \right\} \, .
\end{align*}
If these two subbundles have constant dimensions and form a splitting $\R^d = E^\mathrm{u}(x) \oplus E^\mathrm{s}(x)$ at every point $x \in X$, then this splitting is uniformly hyperbolic.
\end{proposition}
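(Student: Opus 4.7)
My plan proceeds in four stages: (I) invariance, (II) uniform pointwise norm bounds on the restricted cocycle, (III) continuity of the splitting, (IV) the uniform exponential estimate. Stages II and IV share a single compactness argument.

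Stage I is immediate from the cocycle identity: $v\in E^{\mathrm{u}}(x)$ iff $\mathcal{A}(x,1)v\in E^{\mathrm{u}}(fx)$, since both say that a common tail of the orbit is bounded. The heart is Stage II: I claim there exists $M>0$ with $\|\mathcal{A}(x,n)|E^{\mathrm{u}}(x)\|\le M$ for all $x\in X$ and $n\ge 0$, and symmetrically $\|\mathcal{A}(x,-n)|E^{\mathrm{s}}(x)\|\le M$. Arguing by contradiction for the first bound, assume $x_k\in X$, $n_k\ge 0$, and unit $v_k\in E^{\mathrm{u}}(x_k)$ with $\|\mathcal{A}(x_k,n_k)v_k\|\to\infty$. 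Since $v_k\in E^{\mathrm{u}}(x_k)$, the quantity $\psi_k:=\sup_{n\ge 0}\|\mathcal{A}(x_k,n)v_k\|$ is finite; pick $m_k$ near the maximizer so that $\|\mathcal{A}(x_k,m_k)v_k\|\ge\psi_k/2$. With $y_k=f^{m_k}(x_k)$ and $\tilde w_k=\mathcal{A}(x_k,m_k)v_k/\|\mathcal{A}(x_k,m_k)v_k\|\in E^{\mathrm{u}}(y_k)\cap S^{d-1}$, the cocycle identity gives
\[
\|\mathcal{A}(y_k,n)\tilde w_k\|\;=\;\frac{\|\mathcal{A}(x_k,m_k+n)v_k\|}{\|\mathcal{A}(x_k,m_k)v_k\|}\;\le\;\frac{\psi_k}{\psi_k/2}\;=\;2
\]
for every $n\in\mathbb{Z}$ with $m_k+n\ge 0$. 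Compactness of $X$ and continuity of each $\mathcal{A}(\cdot,m)$ force $m_k\to\infty$ (else $\|\mathcal{A}(x_k,m_k)v_k\|$ would be bounded), so any subsequential limit $(y_*,\tilde w_*)$ with $\|\tilde w_*\|=1$ satisfies $\|\mathcal{A}(y_*,n)\tilde w_*\|\le 2$ for \emph{every} $n\in\mathbb{Z}$, placing $\tilde w_*\in E^{\mathrm{u}}(y_*)\cap E^{\mathrm{s}}(y_*)=\{0\}$, a contradiction. The $E^{\mathrm{s}}$ bound follows by running the same argument on the reversed cocycle $(x,n)\mapsto\mathcal{A}(x,-n)$ over $f^{-1}$.

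Stage III is now immediate: if $v_k\in E^{\mathrm{u}}(x_k)$ and $(x_k,v_k)\to(x,v)$, then $\|\mathcal{A}(x,n)v\|\le M\|v\|$ for all $n\ge 0$, so $v\in E^{\mathrm{u}}(x)$; upper semicontinuity together with constant dimension yields continuity of the splitting, and compactness then gives a uniform lower bound on $\measuredangle(E^{\mathrm{u}}(x),E^{\mathrm{s}}(x))$. For Stage IV, suppose that for every $k$ there are $x_k$ and unit $v_k\in E^{\mathrm{u}}(x_k)$ with $\|\mathcal{A}(x_k,k)v_k\|\ge 1/4$. Repeat the Stage II construction, but now the uniform bound $M$ from Stage II replaces $\psi_k/2$: with $y_k=f^k(x_k)$ and $\tilde w_k=\mathcal{A}(x_k,k)v_k/\|\mathcal{A}(x_k,k)v_k\|$ we get $\|\mathcal{A}(y_k,n)\tilde w_k\|\le M/\|\mathcal{A}(x_k,k)v_k\|\le 4M$ whenever $n+k\ge 0$. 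A subsequential limit again produces a unit vector in $E^{\mathrm{u}}(y_*)\cap E^{\mathrm{s}}(y_*)=\{0\}$, a contradiction. The symmetric statement on $E^{\mathrm{s}}$ completes the proof of uniform hyperbolicity.

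The main obstacle is Stage II, specifically obtaining uniformity in $x$ with no continuity of the subbundles in hand. The key device is to select $m_k$ close to where the sup $\psi_k$ is (approximately) attained; this provides two-sided control on $\mathcal{A}(y_k,n)\tilde w_k$ over the window $n\ge -m_k$, which expands to all of $\mathbb{Z}$ as $m_k\to\infty$. The direct-sum hypothesis $E^{\mathrm{u}}\cap E^{\mathrm{s}}=\{0\}$ is precisely what turns the resulting two-sided bound in the limit into a contradiction.
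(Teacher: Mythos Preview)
The paper does not prove this proposition; it simply cites Ma\~n\'e~\cite[Corollary~1.2]{Man} (together with related references to Sacker--Sell, Selgrade, and others). Your argument is a correct, self-contained proof, and it follows the classical strategy behind those results: in the absence of a uniform bound, renormalize at an approximate maximizer $m_k$ to obtain two-sided control on the window $n\ge -m_k$, which expands to all of~$\mathbb Z$, and then use compactness together with the hypothesis $E^{\mathrm u}\cap E^{\mathrm s}=\{0\}$ to reach a contradiction at the limit point. Two minor remarks. First, Stage~IV yields separate times for the two bundles; to match the paper's definition of uniform hyperbolicity verbatim, note that any common multiple works for both, by submultiplicativity along the invariant subbundles. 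Second, as written in the paper the labels $E^{\mathrm u}$ and $E^{\mathrm s}$ in this proposition are reversed relative to the paper's own definition of a uniformly hyperbolic splitting---vectors with bounded forward orbit form the \emph{contracting} bundle---but this is a cosmetic slip in the statement; your proof correctly establishes uniform contraction on the forward-bounded bundle and uniform expansion on the backward-bounded one, which is what ``this splitting is uniformly hyperbolic'' must mean.
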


The statement above corresponds to Ma\~n\'e~\cite[Corollary~1.2]{Man} specialized to bundles of constant dimensions. 
We note that related results were obtained by Sacker and Sell \cite{SS74} and Selgrade \cite{Sel};
see \cite[Chapters~1 and 6]{ChiLat}, \cite[Chapter~6]{Wen}, and \cite{BlumLat} for more information.

\subsection{Vector bundle setting}\label{ss.bundle}

If $\mathcal{A}$ is a $d$-dimensional linear cocycle over $f \colon X \to X$ with generator $A$, we can consider the following transformation $T_\mathcal{A}$ 
of the product $X \times \R^d$:
\[
T_\mathcal{A}(x , v) = (f(x), A(x) v) \, .
\]
This is a skew-product over $f$ which is linear on fibers. 
The tranformation $T_\mathcal{A}$ completely determines the cocycle.

This motivates considering the more general situation where the product space $X \times \R^d$ is replaced with a vector bundle over $X$ of fiber dimension $d$, and the skew-product $T_\mathcal{A}$ is replaced with a continuous vector bundle automorphism $F$ that factors over $f$. 
The concepts discussed above can be extended to this setting in a straightforward manner. 

The fiber bundle setting allows us, for example, to discuss \emph{derivative cocycles}: in this case, $f$ is a $C^1$ diffeomorphism and we consider the induced automorphism $F=Df$ on the tangent bundle. 

For simplicity of presentation, in this paper we focus on linear cocycles (on trivial vector bundles $X \times \R^d$), but actually the statements remain correct for general vector bundles, unless noted otherwise.

\subsection{Subadditivity}

Recall that a sequence of functions $\varphi_n \colon X \to \R$ is called \emph{subadditive} with respect to some dynamics $f \colon X \to X$ if, for all $n,m \ge 1$,
\[
\varphi_{n+m} \le \varphi_n \circ f^m + \varphi_m \, . 
\]
A standard example is the sequence $\varphi_n(x) = \log \|\mathcal{A}(x,n)\|$,
where $\mathcal{A}$ is cocycle over~$f$.

\begin{proposition}[{Semi-uniform subadditive ergodic theorem \cite[Theorem~1]{Sch}, \cite[Theorem~1.7]{Stu}}]\label{p.SUSAET}
Let $f$ be a continuous transformation of a compact metric space~$X$.
Consider a subadditive sequence of continuous functions $\varphi_n \colon X \to \R$. Then:
\begin{equation}\label{e.erg_max}
\lim_{n \to \infty} \sup_{X} \frac{\varphi_n}{n} = 
\sup_{\mu \in \mathcal{M}_f} \lim_{n \to \infty} \int_X \frac{\varphi_n}{n} \, d\mu \, ,
\end{equation}
where $\mathcal{M}_f$ denotes the set of $f$-invariant Borel probability measures. Furthermore, the supremum in the right hand side is attained at some ergodic measure.
\end{proposition}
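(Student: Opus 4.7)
The plan is to split the argument into three movements: establishing the existence of both limits, proving the nontrivial inequality via empirical measures and iterated subadditivity, and finally upgrading the maximizer to an ergodic one.

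First, taking suprema in the pointwise subadditivity inequality $\varphi_{n+m}(x) \le \varphi_n(f^m x) + \varphi_m(x)$ shows that $a_n \coloneqq \sup_X \varphi_n$ is itself subadditive, so by Fekete's lemma $\Lambda \coloneqq \lim_n a_n/n = \inf_n a_n/n$ exists. Likewise, for any $\mu \in \mathcal{M}_f$, the sequence $b_n(\mu) \coloneqq \int \varphi_n\, d\mu$ is subadditive (using $f$-invariance of $\mu$), so $L(\mu) \coloneqq \lim_n b_n(\mu)/n$ exists, and integrating $\varphi_n \le a_n$ yields the trivial inequality $L(\mu) \le \Lambda$.

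For the reverse inequality, I would choose points $x_n \in X$ with $\varphi_n(x_n)/n \to \Lambda$ and form the empirical measures $\mu_n \coloneqq \frac{1}{n} \sum_{k=0}^{n-1} \delta_{f^k x_n}$. Weak-$\ast$ compactness of the space of probability measures on the compact space $X$ yields a subsequence with $\mu_{n_j} \to \mu$, and the total-variation bound $\|f_\ast \mu_n - \mu_n\| \le 2/n$ forces $\mu \in \mathcal{M}_f$. The key estimate is the following: for any fixed $m$ and each $l \in \{0, \dots, m-1\}$, write $n - l = k_l m + r_l$ with $r_l \in [0,m)$ and iterate subadditivity to obtain
\[
\varphi_n(x) \le \varphi_l(x) + \sum_{j=0}^{k_l - 1} \varphi_m(f^{l + jm} x) + \varphi_{r_l}(f^{l + k_l m} x).
\]
Averaging over $l = 0, \dots, m-1$, the indices $\{l + jm\}_{l,j}$ cover every $i \in \{0, \dots, n-1\}$ with at most $m-1$ exceptions, so
\[
\varphi_n(x) \le \frac{n}{m} \int \varphi_m \, d\mu_n^x + C(m),
\]
where $\mu_n^x$ is the empirical measure based at $x$ and $C(m)$ depends only on $\|\varphi_l\|_\infty$ for $l \le m$ (finite by continuity and compactness of $X$). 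Applying this at $x = x_{n_j}$, dividing by $n_j$, and taking the limit along the subsequence gives $\Lambda \le \frac{1}{m} \int \varphi_m\, d\mu$; sending $m \to \infty$ then yields $\Lambda \le L(\mu)$.

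To upgrade to an ergodic maximizer, apply Kingman's subadditive ergodic theorem to $\mu$ to produce an $f$-invariant limit $\varphi^{\ast}(x) = \lim_n \varphi_n(x)/n$ defined $\mu$-a.e., satisfying $\int \varphi^{\ast}\, d\mu = L(\mu) \ge \Lambda$. Invoking the ergodic decomposition $\mu = \int \mu_\omega\, dP(\omega)$, on $P$-almost every component $\mu_\omega$ is ergodic and Kingman forces $\varphi^{\ast} \equiv L(\mu_\omega)$ $\mu_\omega$-a.s.; integrating gives $\int L(\mu_\omega)\, dP(\omega) = L(\mu) \ge \Lambda$. Hence some ergodic $\mu_\omega$ satisfies $L(\mu_\omega) \ge \Lambda$, and the trivial direction upgrades this to equality. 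The main obstacle is the combinatorial bookkeeping in the averaging step — controlling the discrepancy between $\sum_{l,j} \varphi_m(f^{l+jm}x)$ and the empirical average $\sum_{i=0}^{n-1} \varphi_m(f^i x)$ uniformly in $n$; the rest of the argument is soft, relying only on Fekete, weak-$\ast$ compactness, and Kingman's theorem.
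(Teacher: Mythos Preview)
The paper does not supply its own proof of this proposition; it is quoted as a known result with citations to Schreiber \cite[Theorem~1]{Sch} and Sturman--Stark \cite[Theorem~1.7]{Stu}. Your argument is correct and is essentially the standard proof one finds in those references: Fekete for the existence of the left-hand limit, the Krylov--Bogolyubov construction (empirical measures along maximizing orbit segments) combined with iterated subadditivity for the nontrivial inequality, and Kingman plus ergodic decomposition to locate an ergodic maximizer. The combinatorial bookkeeping you flag is fine --- averaging the $m$ staggered decompositions recovers the Birkhoff sum of $\varphi_m$ up to at most $m-1$ missing indices and $O(1)$ boundary terms, all controlled by $\max_{l<m}\|\varphi_l\|_\infty$. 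One small point worth making explicit: if $\Lambda=-\infty$ the identity is trivial (both sides equal $-\infty$ and any ergodic measure ``attains'' the supremum), so you may assume $\Lambda$ is finite before invoking Kingman and the decomposition.
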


We note that variations of this result have been found by several authors (see e.g.\ \cite[\S~3]{Furman}). The most complete version is \cite[Theorem~A.3]{Mor}, which shows that upper-semicontinuity is sufficient, and provides other characterizations of the quantity \eqref{e.erg_max}.

\begin{corollary}\label{c.maybe_Oxtoby}
Let $f$ be a continuous transformation of a compact metric space $X$.
If $\varphi$ is a continuous function such that $\int \varphi \, d\mu = c$ for every $\mu \in \mathcal{M}_f$, where $c$ is a constant, then the Birkhoff averages of $\varphi$ with respect to $f$ converge uniformly to~$c$.
\end{corollary}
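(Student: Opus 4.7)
The plan is to derive the corollary as a direct application of Proposition~\ref{p.SUSAET}, applied to both $\varphi$ and $-\varphi$. The key observation is that the Birkhoff sums $\varphi_n(x) \coloneqq \sum_{k=0}^{n-1} \varphi(f^k(x))$ form an additive sequence, hence the sequences $(\varphi_n)$ and $(-\varphi_n)$ are both subadditive and both consist of continuous functions (since $\varphi$ is continuous and $f$ is continuous). So Proposition~\ref{p.SUSAET} applies to each.

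First I would apply Proposition~\ref{p.SUSAET} to $(\varphi_n)$. For any $\mu \in \mathcal{M}_f$, invariance of $\mu$ gives $\int \varphi_n \, d\mu = n \int \varphi \, d\mu = nc$, so the limit $\lim_n \int (\varphi_n / n) \, d\mu$ equals $c$ for every invariant measure. Thus the right-hand side of~\eqref{e.erg_max} is simply $c$, and we conclude
\[
\lim_{n\to\infty} \sup_{x \in X} \frac{\varphi_n(x)}{n} \;=\; c \, .
\]
Next I would apply Proposition~\ref{p.SUSAET} to the subadditive sequence $(-\varphi_n)$; the same calculation yields $\int (-\varphi_n/n) \, d\mu = -c$ for every $\mu \in \mathcal{M}_f$, so
\[
\lim_{n\to\infty} \sup_{x \in X} \frac{-\varphi_n(x)}{n} \;=\; -c, \qquad \text{i.e.,} \qquad \lim_{n\to\infty} \inf_{x \in X} \frac{\varphi_n(x)}{n} \;=\; c \, .
\]

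Combining these two limits, both the supremum and the infimum of $\varphi_n/n$ over $X$ converge to the same constant $c$, which is exactly the assertion that $\varphi_n/n \to c$ uniformly on $X$.

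There is no real obstacle here: the proof is essentially a two-sided application of the semi-uniform subadditive ergodic theorem, using only the additivity of Birkhoff sums and the hypothesis that $\int \varphi \, d\mu$ is the same constant $c$ for all $\mu \in \mathcal{M}_f$. The only minor subtlety is remembering that Proposition~\ref{p.SUSAET} controls only upper bounds (suprema), so one must feed in $-\varphi$ as well in order to get lower bounds (infima), and thereby two-sided uniform convergence.
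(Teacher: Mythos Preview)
Your proof is correct and matches the paper's approach: the paper simply states that the corollary ``follows directly from Proposition~\ref{p.SUSAET}'' without spelling out the details, and your two-sided application of the semi-uniform subadditive ergodic theorem to the additive sequence $(\varphi_n)$ and to $(-\varphi_n)$ is exactly the intended argument.
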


Corollary~\ref{c.maybe_Oxtoby} follows directly from Proposition~\ref{p.SUSAET}, or from the usual proof that unique ergodicity implies uniform convergence in the ergodic theorem. 

\section{Completely regular cocycles} \label{s.CR}

\subsection{Definition and basic properties}\label{ss.CR_def}

Let $X$ be a compact metric space and $f:X\to X$ a homeomorphism. Let $\mathcal{A}\colon X\times\mathbb{Z}\to \mathrm{GL}(d,\mathbb{R})$ be a continuous cocycle over $f$. We say that $\mathcal{A}$ is \emph{completely regular} if it satisfies the following two conditions:
\begin{enumerate}
\item\label{i.CR1} every point $x\in X$ is LP-regular;
\item\label{i.CR2} the Lyapunov exponents are independent of the point $x\in X$; more precisely, there are numbers $\chi_1>\cdots>\chi_s$ and positive integers $d_1,\dots,d_s$ with $\sum_{i=1}^s d_i=d$ such that $\mathrm{Sp}(\mathcal{A},x)=\{(\chi_i,d_i), i=1,\dots,s\}$ for every $x\in X$. 
\end{enumerate}

Therefore, if a cocycle is completely regular, then at every $x$ we have an Oseledets splitting 
\begin{equation}\label{e.Oseledets_const}
\R^d=\bigoplus_{i=1}^s E_i(x) 
\end{equation}
where the number of subbundles and their dimensions are independent of the point. 
In general, the Oseledets splitting is only measurable and there is no a priori information on the speed of convergence in \eqref{e.oseledets_norm_subbundle} and \eqref{e.oseledets_conorm_subbundle}. However, under the assumption of complete regularity, we obtain better properties, as the following result shows.

\begin{theorem}\label{t.uniformity}
Let $\mathcal{A}$ be a completely regular cocycle over a homeomorphism $f$ of a compact metric space $X$. 
Let \eqref{e.Oseledets_const} be its Oseledets splitting.
Then for each $i$ with $1 \le i \le s$ the following statements hold:
\begin{enumerate}
\item\label{i.continuity} the subspace $E_i(x)$ depends continuously on $x$;
\item\label{i.uniformity} the limits in \eqref{e.oseledets_norm_subbundle} and \eqref{e.oseledets_conorm_subbundle} are both uniform with respect to $x$; more precisely, for all $\varepsilon>0$ we can find $N$ such that
\begin{equation}\label{e.com-reg-char}
\begin{aligned}
\chi_i-\varepsilon&\le\frac1n\log m(\mathcal{A}(x,n)|E_i(x)) & \\
&\le\frac1n\log\|\mathcal{A}(x,n)|E_i(x)\|&\le \chi_i+\varepsilon \, .
\end{aligned}
\end{equation}
for every $|n|>N$ and $x\in X$.
\end{enumerate}
\end{theorem}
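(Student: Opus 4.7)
The plan is to first show that the Oseledets splitting \eqref{e.Oseledets_const} is uniformly dominated, so that part~\eqref{i.continuity} falls out of Proposition~\ref{p.DS_is_cont}; with continuity of the $E_i$'s in hand, part~\eqref{i.uniformity} will follow cleanly from the semi-uniform subadditive ergodic theorem (Proposition~\ref{p.SUSAET}).

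To establish domination, I would introduce the family of \emph{shifted cocycles}
\[
\mathcal{A}_t(x,n) := e^{-tn}\,\mathcal{A}(x,n), \qquad t \in (\chi_{k+1},\chi_k), \quad 1 \le k \le s-1,
\]
each of which is completely regular with Lyapunov exponents $\chi_i - t$. Writing $F_k(x) := \bigoplus_{i \le k} E_i(x)$ and $G_{k+1}(x) := \bigoplus_{i \ge k+1} E_i(x)$ (both of constant dimension), I claim that these are exactly the subspaces of vectors with bounded backward and bounded forward $\mathcal{A}_t$-trajectories, respectively. Since they span $\R^d$ at every point, Proposition~\ref{p.Mane} then yields uniform hyperbolicity of the splitting $\R^d = F_k(x)\oplus G_{k+1}(x)$ for $\mathcal{A}_t$. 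Translating back to $\mathcal{A}$, this gives some $n_0 > 0$ (uniform in $x$) with
\[
m\bigl(\mathcal{A}(x,n_0)|F_k(x)\bigr) \;>\; 2 e^{tn_0} \;>\; \tfrac{1}{2} e^{tn_0} \;>\; \bigl\|\mathcal{A}(x,n_0)|G_{k+1}(x)\bigr\|.
\]
Running this for each $k=1,\dots,s-1$ and taking a common $n_0$, one obtains $m(\mathcal{A}(x,n_0)|E_i(x)) > 2\|\mathcal{A}(x,n_0)|E_{i+1}(x)\|$ at every $x$, i.e., the Oseledets splitting is uniformly dominated. Proposition~\ref{p.DS_is_cont} then supplies \eqref{i.continuity}.

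The step I expect to be the main obstacle is the identification of $F_k$ and $G_{k+1}$ with the forward/backward-bounded subspaces for $\mathcal{A}_t$, and it is the only place where full LP-regularity (rather than just forward regularity) is essential. For a vector $v = \sum v_i$ with $v_i \in E_i(x)$, LP-regularity gives $\|\mathcal{A}(x,n)v_i\| = e^{n\chi_i + o(n)}$ as $n \to \pm\infty$ whenever $v_i \ne 0$, while the sub-exponential decay of the angles between pushforwards of distinct $E_i$'s permits recovery of each component $\mathcal{A}(x,n)v_i$ from the sum $\mathcal{A}(x,n)v$ at only sub-exponential cost. For $t \in (\chi_{k+1},\chi_k)$, forward-boundedness of $\mathcal{A}_t(x,n)v = e^{-tn}\mathcal{A}(x,n)v$ therefore forces all $v_i$ with $\chi_i > t$ to vanish, yielding $v \in G_{k+1}(x)$; the backward case is symmetric.

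For part~\eqref{i.uniformity}, with \eqref{i.continuity} in hand the functions $\varphi_n^i(x) := \log\|\mathcal{A}(x,n)|E_i(x)\|$ and $\psi_n^i(x) := \log m(\mathcal{A}(x,n)|E_i(x))$ are continuous on $X$, with $(\varphi_n^i)$ subadditive and $(\psi_n^i)$ superadditive over $f$. For every $\mu \in \mathcal{M}_f$, the multiplicative ergodic theorem combined with complete regularity gives $\lim_n \tfrac{1}{n}\int \varphi_n^i\,d\mu = \lim_n \tfrac{1}{n}\int \psi_n^i\,d\mu = \chi_i$. Proposition~\ref{p.SUSAET} then yields $\sup_X \tfrac{\varphi_n^i}{n} \to \chi_i$, and the same proposition applied to the subadditive sequence $-\psi_n^i$ gives $\inf_X \tfrac{\psi_n^i}{n}\to \chi_i$. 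Since $\psi_n^i \le \varphi_n^i$ pointwise, a sandwich argument forces both sup and inf of each sequence to converge to $\chi_i$, which is precisely the uniform estimate~\eqref{e.com-reg-char} for $n \to +\infty$. The case $n \to -\infty$ is handled identically after replacing $\mathcal{A}$ with the completely regular cocycle $\mathcal{B}(x,n) := \mathcal{A}(x,-n)$ over $f^{-1}$, whose Lyapunov exponents are $-\chi_i$ on the same Oseledets subspaces.
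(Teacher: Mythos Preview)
Your proposal is correct and follows essentially the same route as the paper: shift the cocycle by a constant in each gap $(\chi_{k+1},\chi_k)$, invoke Proposition~\ref{p.Mane} to obtain uniform hyperbolicity of $F_k \oplus G_{k+1}$ (hence continuity of the $E_i$ via Proposition~\ref{p.DS_is_cont}), and then feed the now-continuous restricted cocycles into the semi-uniform subadditive ergodic theorem to get the uniform convergence. The paper packages the last step as a standalone Proposition~\ref{p.one_exponent} and is terser about verifying the hypotheses of Proposition~\ref{p.Mane}, but the substance is the same.
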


Before proving the statement above in full generality, we establish first the case $s=1$, where actually a weaker hypothesis suffices:

\begin{proposition}\label{p.one_exponent}
Let $\mathcal{A}$ be a continuous linear cocycle over a homeomorphism $f$ of a compact metric space $X$. Suppose that there exists a number $\chi \in \R$ such that for every $f$-invariant Borel probability measure $\mu$ on $X$, for $\mu$-almost every $x\in X$, all Lyapunov exponents at $x$ are equal to $\chi$. 
Then, for all $x \in X$,
\[
\lim_{n\to \pm \infty} \frac1n\log m(\mathcal{A}(x,n)) = \lim_{n\to \pm \infty} \frac1n\log\|\mathcal{A}(x,n)\|
\]
and these limits are uniform with respect to $x$.
\end{proposition}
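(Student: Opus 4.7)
The plan is to apply Proposition~\ref{p.SUSAET} (the semi-uniform subadditive ergodic theorem) twice: once to the log-norm $\varphi_n(x) := \log \|\mathcal{A}(x,n)\|$ to obtain a uniform upper bound, and once to the quantity $\psi_n(x) := -\log m(\mathcal{A}(x,n))$ to obtain a matching uniform lower bound. The extension to $n \to -\infty$ will then follow from a cocycle identity.

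First I would treat positive time. The sequence $\varphi_n$ is continuous and subadditive over $f$ (from $\|\mathcal{A}(x,n+k)\| \le \|\mathcal{A}(f^k(x),n)\| \cdot \|\mathcal{A}(x,k)\|$), and by submultiplicativity of the co-norm $m(LM)\ge m(L)m(M)$ the sequence $\psi_n$ is likewise continuous and subadditive. For any $\mu \in \mathcal{M}_f$, Oseledets together with \eqref{e.oseledets_norm} and \eqref{e.oseledets_conorm} give, for $\mu$-a.e.\ $x$,
\[
\frac{1}{n} \varphi_n(x) \to \chi_1(\mathcal{A},x), \qquad \frac{1}{n} \psi_n(x) \to -\chi_{s(x)}(\mathcal{A},x),
\]
and the hypothesis forces both limits to equal $\chi$ and $-\chi$ respectively at $\mu$-a.e.\ $x$ (for every $\mu$, ergodic or not, via ergodic decomposition, since the top and bottom exponents both collapse to $\chi$). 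Kingman's ergodic theorem, or equivalently the convergence of integrals for a subadditive sequence, then identifies $\lim_n \int \varphi_n/n \, d\mu = \chi$ and $\lim_n \int \psi_n/n \, d\mu = -\chi$ for every $\mu \in \mathcal{M}_f$. Proposition~\ref{p.SUSAET} thus yields
\[
\lim_{n \to +\infty} \sup_X \frac{\varphi_n}{n} = \chi, \qquad \lim_{n \to +\infty} \sup_X \frac{\psi_n}{n} = -\chi,
\]
that is, $\frac{1}{n}\log\|\mathcal{A}(x,n)\| \le \chi + \varepsilon$ and $\frac{1}{n}\log m(\mathcal{A}(x,n)) \ge \chi - \varepsilon$ uniformly in $x$ for all sufficiently large $n$. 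Combined with the trivial inequality $m(L) \le \|L\|$, this sandwiches both quantities between $\chi \pm \varepsilon$, establishing uniform convergence to $\chi$ as $n \to +\infty$.

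To extend the result to $n \to -\infty$, I would use the identity $\mathcal{A}(x,-n) = \mathcal{A}(f^{-n}(x), n)^{-1}$, which gives $\|\mathcal{A}(x,-n)\| = 1/m(\mathcal{A}(f^{-n}(x), n))$ and $m(\mathcal{A}(x,-n)) = 1/\|\mathcal{A}(f^{-n}(x), n)\|$. Hence
\[
\frac{1}{-n} \log \|\mathcal{A}(x,-n)\| = \frac{1}{n} \log m(\mathcal{A}(f^{-n}(x), n)),
\]
and similarly with the roles of norm and co-norm exchanged, so the uniform positive-time statements already proved (applied at the point $f^{-n}(x) \in X$) immediately deliver the corresponding uniform limits as $n \to -\infty$. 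The only non-routine ingredient is Proposition~\ref{p.SUSAET}; everything else is bookkeeping. The mildly subtle point is that the hypothesis is stated for \emph{every} invariant measure, so when evaluating the supremum on the right-hand side of Proposition~\ref{p.SUSAET} one must confirm via ergodic decomposition that $\int \chi_1(\mathcal{A},\cdot) \, d\mu = \chi$ (and $\int \chi_{s(\cdot)}(\mathcal{A},\cdot) \, d\mu = \chi$) for every $\mu \in \mathcal{M}_f$, not merely for the ergodic ones; this is immediate.
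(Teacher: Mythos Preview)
Your proof is correct and follows essentially the same approach as the paper: apply the semi-uniform subadditive ergodic theorem to $\log\|\mathcal{A}(x,n)\|$ and to (the negative of) $\log m(\mathcal{A}(x,n))$, then sandwich using $m(L)\le\|L\|$; negative times are handled via the inverse cocycle. Your treatment is in fact slightly more careful in justifying the value of the right-hand side of Proposition~\ref{p.SUSAET} via Kingman and ergodic decomposition, whereas the paper simply asserts pointwise convergence.
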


\begin{proof}
It is sufficient to consider positive times $n$, since negative times can be dealt with using the inverse cocycle. 
Consider the subadditive sequence of continuous functions 
$\varphi_n(x)=\log\|\mathcal{A}(x,n)\|$.
By assumption, the sequence 
$\frac{\varphi_n(x)}{n}$ converges pointwise to the constant $\chi$. It follows from Proposition~\ref{p.SUSAET} that
\[
\lim_{n\to\infty}\sup_{x\in X}\frac{\varphi_n(x)}{n}=\chi.
\]
Similarly, the sequence 
$\psi_n(x)=\log m(\mathcal{A}(x, n))$ is superadditive and $\frac{\psi_n(x)}{n}$ converges pointwise to the constant $\chi$, implying that
\[
\lim_{n\to\infty}\inf_{x\in X}\frac{\psi_n(x)}{n}=\chi.
\]
Since $\psi_n\le\varphi_n$, we conclude that both limits $\frac{\varphi_n}{n}\to\chi$ and 
$\frac{\psi_n}{n}\to\chi$ are uniform. 
\end{proof}

\begin{proof}[Proof of Theorem~\ref{t.uniformity}]
Let $\mathcal{A}$ be a completely regular cocycle, with Oseledets splitting $\R^d=\bigoplus_{i=1}^s E_i(x)$. Let $\chi_1>\cdots>\chi_s$ be the Lyapunov expoents, which are independent of $x \in X$.
The case $s=1$ is covered by Proposition~\ref{p.one_exponent}, so assume that $s \ge 2$.

Let us prove statement~\eqref{i.continuity}, i.e.,  continuity of the subbundles $E_i(x)$. 
Fix numbers $c_1,\dots,c_{s-1}$ such that $\chi_i>c_i>\chi_{i+1}$. For each $i\in\{1,\dots,s-1\}$, consider the cocycle $\tilde{\mathcal{A}}_i=e^{-c_i}\mathcal{A}$. This cocycle admits an invariant measurable splitting $E^\mathrm{u}_i(x)\oplus E^\mathrm{s}_i(x)$, where
$$
E^\mathrm{u}_i(x) = E_1(x)\oplus\cdots\oplus E_i(x), \quad 
E^\mathrm{s}_i(x) = E_{i+1}(x)\oplus\cdots\oplus E_s(x) \, .
$$
Note that all Lyapunov exponents along $E^\mathrm{u}_i(x)$ (respectively, $E^\mathrm{s}_i(x)$) are positive (respectively, negative).
An application of Proposition~\ref{p.Mane} shows that the splitting  $E^\mathrm{u}_i \oplus E^\mathrm{s}_i$ is uniformly hyperbolic.  In particular, the bundles $E_i^\mathrm{u}$ and $E_i^\mathrm{s}$ are continuous. It follows that $E_1 = E^\mathrm{u}_1$ and $E_s = E^\mathrm{s}_s$ are continuous. Furthermore, for each~$i$, the bundle $E_i$ is the transverse intersection of $E_i^\mathrm{u}$ and $E_{i-1}^\mathrm{s}$, and therefore, it is continuous as well. This completes the proof of statement~\eqref{i.continuity}. 

Now we apply Proposition~\ref{p.one_exponent} (or actually its extension to the vector bundle setting) to the restrictions $\mathcal{A}(x, n)|E_i(x)$. We then obtain the desired uniformity property, statement~\eqref{i.uniformity}.
\end{proof}

As an important consequence of  Theorem~\ref{t.uniformity}, we obtain the property of absolute domination (defined in Section~\ref{ss.dominations} above):

\begin{corollary}\label{c.DS}
Given a completely regular cocycle, its Oseledets splitting \eqref{e.Oseledets_const} is either trivial (i.e., $s=1$) or absolutely dominated.
\end{corollary}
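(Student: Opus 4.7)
The plan is to deduce the absolute domination property directly from the uniform convergence bounds established in Theorem~\ref{t.uniformity}\eqref{i.uniformity}, together with the fact that for a completely regular cocycle the Lyapunov exponents $\chi_1 > \cdots > \chi_s$ are strict global constants, so there is a positive spectral gap $\chi_i - \chi_{i+1} > 0$ for each $i$ with $0 < i < s$.

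Assuming $s \ge 2$, I would first fix
\[
\varepsilon \coloneqq \tfrac{1}{3} \min_{1 \le i < s} (\chi_i - \chi_{i+1}) > 0,
\]
so that $\chi_i - \varepsilon > \chi_{i+1} + \varepsilon$ for every $i$. Next, applying Theorem~\ref{t.uniformity}\eqref{i.uniformity} to each of the finitely many indices $i = 1, \dots, s$, I would pick a common $N$ such that for all $|n| > N$, all $x \in X$, and all $i$,
\[
n(\chi_i - \varepsilon) \le \log m\bigl(\mathcal{A}(x,n)|E_i(x)\bigr) \le \log \bigl\|\mathcal{A}(x,n)|E_i(x)\bigr\| \le n(\chi_i + \varepsilon).
\]
Combining the lower bound for $E_i$ at an arbitrary point $x$ with the upper bound for $E_{i+1}$ at an arbitrary point $y$ yields, for $n > N$,
\[
\log m\bigl(\mathcal{A}(x,n)|E_i(x)\bigr) - \log \bigl\|\mathcal{A}(y,n)|E_{i+1}(y)\bigr\| \ge n(\chi_i - \chi_{i+1} - 2\varepsilon) \ge n\varepsilon,
\]
which grows linearly in $n$ independently of $x$, $y$, and $i$.

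I would then choose $n_0 > N$ large enough that $n_0 \varepsilon > \log 2$. This gives
\[
m\bigl(\mathcal{A}(x,n_0)|E_i(x)\bigr) > 2 \bigl\|\mathcal{A}(y,n_0)|E_{i+1}(y)\bigr\|
\]
for every pair $x, y \in X$ and every $i$ with $0 < i < s$, which is precisely the definition of absolute domination of the splitting \eqref{e.Oseledets_const} given in Section~\ref{ss.dominations}.

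There is no real obstacle here; the argument is essentially bookkeeping on the uniform estimates. The only points worth being explicit about are (a) that the finiteness of $s$ permits selecting a single $\varepsilon$ and a single $N$ valid for all indices simultaneously, and (b) that the uniformity of the bounds in $x$ (provided by Theorem~\ref{t.uniformity}) is what allows the same $n_0$ to work when $x$ and $y$ are chosen independently in the two sides of the inequality.
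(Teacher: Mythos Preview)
Your proof is correct and follows essentially the same route as the paper's: both arguments invoke Theorem~\ref{t.uniformity}\eqref{i.uniformity} to obtain the uniform two-sided bounds on $\log m(\mathcal{A}(x,n)|E_i)$ and $\log\|\mathcal{A}(y,n)|E_{i+1}\|$, then exploit the positive spectral gap $\chi_i - \chi_{i+1}$ to separate them. You are in fact slightly more careful than the paper, which stops at the strict inequality $m(\mathcal{A}(x,n)|E_i) > \|\mathcal{A}(y,n)|E_{i+1}\|$ without explicitly securing the factor of~$2$ required by the definition of absolute domination; your extra step of choosing $n_0\varepsilon > \log 2$ closes this small gap.
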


\begin{proof}
Let $\mathcal{A}$ be a completely regular cocycle with spectrum $(\chi_i,d_i)_{i=1,\dots,s}$. Assume that $s > 1$. Fix a positive number $\varepsilon$  small enough so that $\chi_i-\varepsilon > \chi_{i+1} + \varepsilon$ for each $i$ with $0< i < s$. Let $N$ be as in Statement~\eqref{i.uniformity} in Theorem~\ref{t.uniformity}. Then, for all $n>N$, all $x,y \in X$, and  all $i$ with $0< i < s$, we have 
\[
m \big( \mathcal{A}(x,n)|E_i(x) \big) \ge e^{(\chi_i - \varepsilon) n } > e^{(\chi_{i+1} + \varepsilon) n }  \ge \big\| \mathcal{A}(y,n)|E_{i+1}(y) \big\| \, .
\]
This proves absolute domination.
\end{proof}

It is clear that complete regularity is invariant under continuous cohomology.
Here is another basic property:

\begin{proposition}\label{p.ep}
The exterior power of a completely regular cocycle is completely regular.
\end{proposition}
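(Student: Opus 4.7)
The plan is to reduce the statement to the properties of exterior power cocycles already recorded in Section~\ref{ss.exterior}. Let $\mathcal{A}$ be a completely regular cocycle, with constant Lyapunov spectrum $\{(\chi_j,d_j) : 1 \le j \le s\}$, and let $\tilde\chi_1 \ge \cdots \ge \tilde\chi_d$ denote the corresponding list of exponents repeated with multiplicities. Fix $i \in \{1,\dots,d\}$ and consider $\mathcal{A}^{\wedge i}$.

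For condition~\eqref{i.CR1} of complete regularity, I would invoke item~(1) at the end of Section~\ref{ss.exterior}: LP-regularity is inherited by exterior powers. Since every $x \in X$ is LP-regular for $\mathcal{A}$, every $x \in X$ is LP-regular for $\mathcal{A}^{\wedge i}$.

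For condition~\eqref{i.CR2}, I would apply formula \eqref{e.ex-power-lp}: at every point $x$, the list of Lyapunov exponents of $\mathcal{A}^{\wedge i}$ is
\[
\{\tilde\chi_k(\mathcal{A}^{\wedge i},x)\} = \big\{\tilde\chi_{j_1}(\mathcal{A},x)+\cdots+\tilde\chi_{j_i}(\mathcal{A},x) \ : \ j_1>\cdots >j_i \big\}.
\]
By complete regularity of $\mathcal{A}$, each $\tilde\chi_{j_k}(\mathcal{A},x)$ is a constant independent of $x$, so the right-hand side is a multiset of real numbers that does not depend on $x$. Collecting coincident values together gives a Lyapunov spectrum $\{(\chi'_\ell,d'_\ell)\}$ for $\mathcal{A}^{\wedge i}$ whose distinct values $\chi'_\ell$ and multiplicities $d'_\ell$ are the same at every point. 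This is exactly condition~\eqref{i.CR2}.

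There is no real obstacle here: once items~(1) and~(2) from Section~\ref{ss.exterior} are in hand, the proof is essentially a bookkeeping argument. The only mild subtlety is remembering that complete regularity demands not just constancy of the exponent values but also of their multiplicities, which is why one must pass from the multiset on the right-hand side of \eqref{e.ex-power-lp} to grouped distinct values; this grouping plainly commutes with the hypothesis that the underlying $\tilde\chi_j$ are constant.
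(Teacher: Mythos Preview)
Your proof is correct and takes essentially the same approach as the paper, which simply says the result follows immediately from the discussion in Subsection~\ref{ss.exterior}. You have just spelled out the details of that immediate deduction.
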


\begin{proof}
This follows immediately from the discussion in Subsection~\ref{ss.exterior}.
\end{proof}

\subsection{Examples}\label{ss.examples}

We present some examples of completely regular cocycles. 

\begin{example}
A one-dimensional continuous linear cocycle with generator $A$ (which in this case takes values in $\R\setminus\{0\}$) is completely regular if and only if the forward and backward limits of Birkhoff averages of the function $\varphi(x) \coloneqq \log |A(x)|$ exist everywhere and are equal to some constant~$c$. Equivalently (see Corollary~\ref{c.maybe_Oxtoby}), $\int \varphi \, d \mu = c$ for every $f$-invariant Borel probability measure~$\mu$. 
Another equivalent condition is that $\varphi-c$ is a uniform limit of continuous coboundaries (see \cite{MOP} or \cite[Lemma~3]{BJ}). 
\end{example}

\begin{example}\label{ex1}
If the space $X$ is finite and $f \colon X \to X$ is a cyclic permutation, that is, if the dynamics consists of a single periodic trajectory, then any cocycle over $f$ is completely regular. 
\end{example}

\begin{example}\label{ex3}
Now assume that $f$ is a uniquely ergodic homeomorphism, and that the cocycle $\mathcal{A}$ takes values in $\mathrm{SL}^{\pm}(2,\R)$ (the subgroup of $\mathrm{GL}(2,\R)$ formed by matrices with determinant $\pm 1$). Let $\mu$ be the unique invariant probability measure for $f$, which is necessarily ergodic, and 
let $\chi_1 = \chi_1(\mu)$ and $\chi_2 = \chi_2(\mu)$.
By the hypothesis on the determinant, 
$\chi_1 \ge 0 \ge \chi_2 = -\chi_1$.

There are three mutually exclusive cases:
\begin{enumerate}
\item\label{i:zero} $\chi_1 = 0 = \chi_2$.
\item\label{i:UH} $\chi_1 > 0 > \chi_2$ and the cocycle is uniformly hyperbolic.
\item\label{i:NUH} $\chi_1 > 0 > \chi_2$ and the cocycle is not uniformly hyperbolic.
\end{enumerate}

\begin{proposition}\label{p.trichotomy}
Consider a continuous $\mathrm{SL}^{\pm}(2,\R)$-cocycle over a uniquely 
ergodic homeomorphism. Then, the cocycle is completely regular in cases \eqref{i:zero} and \eqref{i:UH} above, and it is not completely regular in case~\eqref{i:NUH}.
\end{proposition}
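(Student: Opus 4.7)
The plan is to treat cases \eqref{i:zero} and \eqref{i:UH} as immediate consequences of Proposition~\ref{p.one_exponent}, and to handle case~\eqref{i:NUH} by a contradiction based on Theorem~\ref{t.uniformity}, exploiting the fact that complete regularity together with Lyapunov exponents of opposite sign forces uniform hyperbolicity.

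For case~\eqref{i:zero} I would apply Proposition~\ref{p.one_exponent} directly to $\mathcal{A}$ with $\chi=0$. Unique ergodicity of $f$ together with $\chi_1(\mu)=\chi_2(\mu)=0$ ensures that at $\mu$-a.e.\ point both Lyapunov exponents vanish, which is exactly the hypothesis of that proposition. Its conclusion yields uniform convergence of $\tfrac{1}{n}\log\|\mathcal{A}(x,n)\|$ and $\tfrac{1}{n}\log m(\mathcal{A}(x,n))$ to $0$, so the sandwich $m(\mathcal{A}(x,n))\|v\|\le\|\mathcal{A}(x,n)v\|\le\|\mathcal{A}(x,n)\|\|v\|$ forces the Lyapunov exponent along every nonzero vector at every point to equal $0$. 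Thus $s(x)=1$ identically, the angle condition in the definition of LP-regularity is vacuous, and $\mathcal{A}$ is completely regular with common spectrum $\{(0,2)\}$.

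For case~\eqref{i:UH}, uniform hyperbolicity supplies a continuous $\mathcal{A}$-invariant splitting $\mathbb{R}^2=E^{\mathrm{u}}(x)\oplus E^{\mathrm{s}}(x)$ into one-dimensional subbundles. I would then apply Proposition~\ref{p.one_exponent} (in its vector-bundle form, cf.\ Section~\ref{ss.bundle}) separately to $\mathcal{A}|E^{\mathrm{u}}$ and $\mathcal{A}|E^{\mathrm{s}}$. Each restriction is one-dimensional, so its unique Lyapunov exponent at $\mu$-a.e.\ point is forced to equal $\chi_1$ or $\chi_2$ respectively, and the proposition then delivers uniform convergence of $\tfrac{1}{n}\log\|\mathcal{A}(x,n)|E^{\mathrm{u}}(x)\|\to\chi_1$ and $\tfrac{1}{n}\log\|\mathcal{A}(x,n)|E^{\mathrm{s}}(x)\|\to\chi_2$ at every $x$. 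The angle condition follows from $\mathcal{A}$-invariance of the splitting combined with the uniform positive lower bound on $\measuredangle(E^{\mathrm{u}}(y),E^{\mathrm{s}}(y))$ coming from continuity of the splitting and compactness of $X$. Every point is therefore LP-regular with common spectrum $\{(\chi_1,1),(\chi_2,1)\}$.

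For case~\eqref{i:NUH} I argue by contradiction: suppose $\mathcal{A}$ is completely regular. Since $\chi_1\neq\chi_2$, necessarily $s=2$, and Theorem~\ref{t.uniformity}\eqref{i.uniformity} provides, for any $\varepsilon>0$ with $\chi_1-\varepsilon>0>\chi_2+\varepsilon$, an $N$ such that for all $|n|>N$ and all $x\in X$,
\[
m\bigl(\mathcal{A}(x,n)|E_1(x)\bigr)\ge e^{(\chi_1-\varepsilon)n},\qquad \bigl\|\mathcal{A}(x,n)|E_2(x)\bigr\|\le e^{(\chi_2+\varepsilon)n}.
\]
Choosing $n$ large enough, the right-hand sides exceed $2$ and fall below $1/2$ respectively, so the Oseledets splitting $E_1\oplus E_2$ meets the definition of uniform hyperbolicity from Section~\ref{ss.dominations}, contradicting the standing assumption of~\eqref{i:NUH}. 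I do not foresee any real obstacle, since the uniformity engines of Section~\ref{s.CR} deliver exactly the bounds required; the only mildly delicate step is the verification of the angle condition in case~\eqref{i:UH}, and that reduces to continuity of the hyperbolic splitting together with compactness of the base.
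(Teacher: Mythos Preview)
Your proof is correct and follows essentially the same approach as the paper. The only notable difference is in case~\eqref{i:NUH}: the paper invokes Corollary~\ref{c.DS} to obtain a dominated splitting and then uses the determinant constraint $|\det A|\equiv 1$ to upgrade to uniform hyperbolicity, whereas you go directly through Theorem~\ref{t.uniformity} and exploit the signs $\chi_1>0>\chi_2$ to verify uniform hyperbolicity without needing the determinant; this is a minor streamlining of the same underlying argument.
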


\begin{proof}
Note that $m(\mathcal{A}(x,n)) = \|\mathcal{A}(x,n)\|$, since determinants are $\pm 1$. 
Consider case \eqref{i:zero}. 
By Proposition~\ref{p.one_exponent},
$\frac{1}{n}\log\|\mathcal{A}(x,n)\|$ converges uniformly to zero. It follows that every $x\in X$ is LP-regular with zero Lyapunov exponents. In particular, the cocycle is completely regular.

Next, consider the uniformly hyperbolic case \eqref{i:UH}. The hyperbolic bundles are continuous, invariant, and one-dimensional. Unique ergodicity of the base dynamics then implies that the cocycle is completely regular.

Finally, consider the nonuniformly hyperbolic case \eqref{i:NUH}. If the cocycle were completely regular, then, by Corollary~\ref{c.DS}, it would admit a dominated splitting into one-dimensional bundles. Since $|\det A(x)| \equiv 1$, that splitting would actually be uniformly hyperbolic, contrary to our assumption. This shows that case~\eqref{i:NUH} forbids complete regularity.
\end{proof}

If $f$ is an irrational rotation of the circle, simple examples of cocycles in situation \eqref{i:NUH} were obtained by Herman \cite{Herman}.
The existence of cocycles of type \eqref{i:NUH} for \emph{any} uniquely ergodic dynamics with a non-atomic invariant measure was established only recently by Avila and Damanik \cite{AD}, answering a question of Walters \cite{Walters}. 

We point out that Proposition~\ref{p.trichotomy} is essentially contained in \cite{Furman}. For related results, see \cite{DL}.
\end{example}

\begin{remark}
Given a cocycle $\mathcal{A}$ over a homeomorphism $f$, and an $f$-invariant and ergodic Borel probability measure $\mu$, we define the Lyapunov spectrum $\mathrm{Sp}(\mathcal{A},\mu)$ as equal to $\mathrm{Sp}(\mathcal{A},x)$ for $\mu$-almost every $x \in X$.
Proposition~\ref{p.one_exponent} shows that if $\mathrm{Sp}(\mathcal{A},\mu)$ is the same for all ergodic measures $\mu$ \emph{and consists of a single Lyapunov exponent}, then the cocycle $\mathcal{A}$ is completely regular. On the other hand, the condition in italics cannot be dropped: consider for instance any cocycle of type~\eqref{i:NUH} in Proposition~\ref{p.trichotomy}.
\end{remark}

\begin{example}\label{ex.amenable_matrix}
Let $f$ be a homeomorphism of a compact metric space $X$, $c_1 > \cdots > c_s$ a list of distinct real numbers, and $d_1, \dots, d_s$ positive integers such that $d =d_1 + \dots + d_s$.
Suppose that $B : X \to \mathrm{GL}(d, \R)$ is a continuous block-diagonal matrix function 
\begin{equation}\label{e.normal_form}
B(x) = 
\begin{pmatrix} 
	B_1(x)	&			&	0	\\  
			& \ddots	&		\\ 
	0		&			&	B_s(x)
\end{pmatrix},
\end{equation}
where each diagonal block $B_i(x)$ has dimension $d_i \times d_i$ and has block-triangular form
\begin{equation}\label{e.normal_form_subblocks} 
B_i(x) 
=
e^{c_i} \, 
\begin{pmatrix} 
	U_{i,1}(x)	&	*			& 			& *						\\ 
				& U_{i,2}(x)	& \ddots 	& 						\\
				&				& \ddots	& *						\\
	0			&				&			& U_{i,\ell_i}(x)
\end{pmatrix},
\end{equation}
where each matrix $U_{i,j}(x)$ is orthogonal. 

Consider the cocycle $\mathcal{B}(x,n)$ with generator $B(x)$. Then, this cocycle is completely regular, with Lyapunov exponents $c_1, \dots, c_s$, and respective multiplicities $d_1, \dots, d_s$. 
\end{example}

\begin{example}\label{ex.amenable_bundle}
We extend the previous example. 
Let $f$ be a homeomorphism of a compact metric space $X$, let $\mathcal{E}$ be a continuous vector bundle over $X$ whose fibers $\mathcal{E}(x)$ have dimension $d$, and let $F \colon \mathcal{E} \to \mathcal{E}$ be an automorphism that factors over $f$. We say that $F$ admits a \emph{continuous amenable reduction}\footnote{The terminology comes from \cite{KalSad}, but the two definitions are not exactly the same as we assume extra properties.}
if there exist
\begin{itemize}
\item real numbers $c_1 > \cdots > c_s$;
\item positive integers $d_1, \dots, d_s$ such that $d =d_1 + \dots + d_s$;
\item a continuous family on inner products on the fibers $\mathcal{E}(x)$ (called a ``metric'');
\item a continuous $F$-invariant splitting
\[
\mathcal{E}_1(x) \oplus \cdots \oplus \mathcal{E}_s(x)  =  \mathcal{E}(x) 
\]
where each subbundle $\mathcal{E}_i$ has dimension $d_i$;
\item for each $i \in \{1,\dots, s\}$, a continuous $F$-invariant field of flags 
\[
\{0\} = \mathcal{E}_{i,0}(x) \subset \mathcal{E}_{i,1}(x) \subset \cdots \subset \mathcal{E}_{i,\ell_i}(x)= \mathcal{E}_i(x)
\]
\end{itemize}
such that, for each $i \in \{1,\dots, s\}$ and $j \in \{1,\dots,\ell_i\}$, the action induced by $F$ on the quotient bundle $\mathcal{E}_{i,j}/\mathcal{E}_{i,j-1}$ expands the (induced) metric by a constant factor $e^{c_i}$. 

In this case, the vector bundle automorphism $F$ is completely regular, with Lyapunov exponents $c_1, \dots, c_s$, and respective multiplicities $d_1, \dots, d_s$. 

Existence of a continuous amenable reduction is invariant with respect to continuous cohomology.
The cocycle $\mathcal{B}$ in Example~\ref{ex.amenable_matrix} admits a continuous amenable reduction where the spaces $\mathcal{E}_{i,j}$ are independent of $x$. In particular, if a cocycle $\mathcal{A}$ is continuously cohomologous to a cocycle $\mathcal{B}$ as in Example~\ref{ex.amenable_matrix}, then $\mathcal{A}$ admits a continuous amenable reduction. The converse is not true, since there is no guarantee that the subbundles $\mathcal{E}_{i,j}$ are trivializable. 
\end{example}

\begin{example}\label{ex.tangency}
Consider any surface diffeomorphism of class $C^1$ admitting a homoclinic tangency, that is, a hyperbolic fixed point $p$ whose stable and unstable manifolds are tangent at a point $q$. Let $X$ be the the closure of the orbit of $q$ and consider the derivative cocycle $Df$ restricted to $X$. This cocycle has a unique LP-regular point, namely $p$. 

This example has the property that all points in $X$ are both forward and backwards regular and the Lyapunov exponents for future and past agree, but nevertheless the cocycle is not completely regular. \end{example}

\subsection{H\"older continuous cocycles over hyperbolic dynamics} \label{ss.Livsic}

We will discuss a setting where it is possible to obtain a concrete description of all completely regular cocycles. The result is stated in terms of vector bundle automorphisms (see Section~\ref{ss.bundle}) and continuous amenable reduction (see Example~\ref{ex.amenable_bundle}). Actually, we will work with H\"older continuous vector bundles and their automorphisms (see \cite[\S~2.2]{KalSad}, \cite[\S~2.2--2.3]{BochiGaribaldi}).

\begin{theorem}\label{t.amenable_reduction}
Assume that $X$ is a compact smooth manifold and $f:X\to X$ is a transitive $C^{1+\delta}$ Anosov diffeomorphism. 
Let $\mathcal{E}$ be a H\"older vector bundle over $X$ and let $A \colon \mathcal{E} \to \mathcal{E}$ be a H\"older automorphism that factors over $f$. Then the following conditions are equivalent:
\begin{enumerate}
\item\label{i.tar_1} 
$F$ is completely regular;
\item\label{i.tar_2} 
$F$ admits a continuous amenable reduction;
\item\label{i.tar_3} 
$\mathrm{Sp}(F,p) = \mathrm{Sp}(F,q)$ for all pairs of periodic points $p,q$.
\end{enumerate}
Furthermore, if any of these conditions hold, then the subbundles and the metric composing the amenable reduction of $F$ can be taken H\"older continuous, with the same H\"older exponent as the initial data.
\end{theorem}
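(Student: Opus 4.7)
My plan is to dispose of the easy directions first and then concentrate on the substantive implication $(3) \Rightarrow (2)$. The implication $(2) \Rightarrow (1)$ is already established in Example~\ref{ex.amenable_bundle}, while $(1) \Rightarrow (3)$ is immediate because complete regularity forces the Lyapunov spectrum to be constant in $x$ and hence, in particular, equal at all pairs of periodic points.

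For $(3) \Rightarrow (2)$, the first stage is to upgrade the hypothesis from periodic orbits to all invariant measures. Kalinin's periodic approximation theorem for H\"older cocycles over hyperbolic base dynamics says that the Lyapunov spectrum at any ergodic $f$-invariant measure can be approximated arbitrarily well by spectra at periodic points; combined with assumption~(3), this forces every ergodic $\mu$ to have spectrum $\mathrm{Sp}(F,\mu) = \{(c_i, d_i)\}_{i=1}^s$. The same principle applied to the exterior powers $F^{\wedge k}$, whose spectra are described by \eqref{e.ex-power-lp}, gives analogous rigidity at every level.

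The second stage constructs the coarse splitting $\mathcal{E} = \mathcal{E}_1 \oplus \cdots \oplus \mathcal{E}_s$. For each cut index $i \in \{1, \dots, s-1\}$ pick $a_i \in (c_{i+1}, c_i)$ and consider the rescaled cocycle $e^{-a_i} F$. Applying Proposition~\ref{p.SUSAET} to suitable exterior powers of $e^{-a_i}F$ and of its inverse, the first-stage spectral data yield uniform (in $x$) exponential estimates showing that the Ma\~n\'e subspaces from Proposition~\ref{p.Mane} have constant dimensions $d_1 + \cdots + d_i$ and $d_{i+1} + \cdots + d_s$, so they furnish a uniformly hyperbolic splitting of $e^{-a_i}F$. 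Intersecting these splittings as $i$ varies produces the dominated splitting $\mathcal{E}_1 \oplus \cdots \oplus \mathcal{E}_s$, and the subbundles are H\"older of the same exponent as the input, because stable/unstable bundles of uniformly hyperbolic H\"older cocycles over $C^{1+\delta}$ Anosov diffeomorphisms are known to be H\"older.

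The main obstacle is the third stage: within each $\mathcal{E}_i$, the restricted cocycle has all Lyapunov exponents (at every ergodic measure) collapsed to the single value $c_i$, and one must construct a H\"older invariant flag $\{0\} = \mathcal{E}_{i,0} \subset \cdots \subset \mathcal{E}_{i,\ell_i} = \mathcal{E}_i$ together with a H\"older inner product making $F$ act on each successive quotient $\mathcal{E}_{i,j}/\mathcal{E}_{i,j-1}$ as the pure dilation by $e^{c_i}$, yielding the block structure of \eqref{e.normal_form_subblocks}. This is the amenable reduction problem for single-exponent H\"older cocycles over hyperbolic dynamics; after rescaling by $e^{-c_i}$ to reduce to the isometric case, it is settled by the Kalinin--Sadovskaya theory of non-stationary normal forms together with Liv\v{s}ic-type arguments, which produce a H\"older-invariant conformal structure on successive quotients whose kernels form the required flag. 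Assembling these block-wise reductions with the splitting from the second stage yields the desired continuous amenable reduction; H\"older exponents are preserved throughout because periodic approximation, the Ma\~n\'e splitting over an Anosov base, and Liv\v{s}ic-type cohomology all respect the H\"older class.
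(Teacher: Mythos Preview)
Your overall architecture matches the paper's: the easy implications $(2)\Rightarrow(1)\Rightarrow(3)$, then for $(3)\Rightarrow(2)$ a dominated splitting $\mathcal{E}=\bigoplus_i\mathcal{E}_i$ followed by a Kalinin--Sadovskaya reduction plus Liv\v{s}ic on each block. Stage~1 (Kalinin's periodic approximation) and stage~3 (Kalinin--Sadovskaya flag and metric, then Liv\v{s}ic to make the conformal factor constant) are both correct and are exactly what the paper does.

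The gap is in stage~2. You assert that applying Proposition~\ref{p.SUSAET} to exterior powers of $e^{-a_i}F$ and its inverse yields uniform exponential estimates forcing the Ma\~n\'e subspaces of Proposition~\ref{p.Mane} to have constant dimensions. But Proposition~\ref{p.SUSAET} only controls $\displaystyle\lim_n \sup_x \tfrac{1}{n}\log\|\,\cdot\,\|$; it gives no uniform \emph{lower} bound on $\tfrac{1}{n}\log\|F^{\wedge k}(x,n)\|$ at an arbitrary point, which is what you would need to verify the hypotheses of Proposition~\ref{p.Mane} or, equivalently, the singular-value gap of Proposition~\ref{p.BG}. In fact the implication ``all ergodic measures have the same hyperbolic spectrum $\Rightarrow$ uniform hyperbolicity of $e^{-a_i}F$'' is false for continuous cocycles in general (take any cocycle in case~\eqref{i:NUH} of Proposition~\ref{p.trichotomy}), so something specific to the H\"older-over-Anosov setting must be used---a closing/shadowing argument, not just subadditivity. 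This is precisely the nontrivial content the paper outsources to Guysinsky \cite{Guy} and DeWitt--Gogolev \cite[Theorems~1.2 and 3.10]{DG}. Your sketch points in their direction but does not supply the mechanism; either cite their theorem as the paper does, or make explicit how the Anosov closing lemma combined with H\"older continuity transfers the periodic spectral data to uniform singular-value separation.
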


\begin{proof}
The implications \eqref{i.tar_2} $\Rightarrow$ \eqref{i.tar_1} $\Rightarrow$ \eqref{i.tar_3} are automatic (and independent of $f$ being Anosov or $F$ being H\"older). We are left to prove that \eqref{i.tar_3} $\Rightarrow$ \eqref{i.tar_2}.

So assume \eqref{i.tar_3}, that is, all periodic points have a common Lyapunov spectrum $(\chi_i,d_i)_{i=1}^s$. As shown by Guysinsky \cite{Guy} and DeWitt--Gogolev \cite[Theorems~1.2 and 3.10]{DG}, 
the cocycle admits a dominated splitting $\mathcal{E}_1 \oplus \cdots \oplus \mathcal{E}_s$ where $\dim \mathcal{E}_i = d_i$.
In our setting, the subbundles $\mathcal{E}_i$ of a dominated splitting are H\"older continuous (see \cite{Brin}). Consider the restriction of the automorphism $F$ to one of these subbundles $\mathcal{E}_i$. 
By a result of Kalinin and Sadovskaya \cite[Theorem 3.9]{KalSad} (see also \cite[\S~13.7]{Sad24}), there exists a H\"older continuous metric (i.e.\ family of inner products) on $\mathcal{E}_i(x)$ and an invariant H\"older continuous field of flags 
\[
\{0\} = \mathcal{E}_{i,0}(x) \subset \mathcal{E}_{i,1}(x) \subset \cdots \subset \mathcal{E}_{i,\ell_i}(x)= \mathcal{E}_i(x)
\]
such that the action induced by $F$ on the quotient bundle $\mathcal{E}_{i,j}/\mathcal{E}_{i,j-1}$ expands the (induced) metric by a constant factor $e^{\varphi_i(x)}$, where $\varphi_i$ is a H\"older function on $X$. In our situation, $\varphi_i$ has the same integral with respect to all invariant probability measures supported on periodic orbits, namely $\chi_i$. Therefore we can  apply Livsic theorem and write $\varphi_i = \chi_i + \psi_i \circ f - \psi_i$, where $\psi_i$ is a H\"older continuous function. We rescale the Riemannian metric inside $\mathcal{E}_i(x)$ by multiplying it by $e^{-\psi_i(x)}$, so that the new expansion factor is the constant $e^{\chi_i}$. Then we combine these metrics by declaring the subbundles $\mathcal{E}_i(x)$ to be orthogonal. This shows that $F$ admits a H\"older continuous ameanable reduction.
\end{proof}

Let us note that in dimension $d=1$, Theorem \ref{t.amenable_reduction} is equivalent to Livsic theorem for the group $(\R,+)$. Since Livsic theorem does not hold for functions that are merely continuous instead of H\"older (see \cite[p.~215]{BJ} or \cite{Kocsard}), the regularity hypothesis on the automorphism is necessary for the validity of our statement. 

On the other hand, we believe that Theorem \ref{t.amenable_reduction} holds for (transitive) Anosov diffeomorphisms of class $C^1$ (instead of $C^{1+\delta}$), or even for (transitive) hyperbolic homeomorphisms (see e.g.\ \cite[Chapter~7]{Ruelle}, \cite[Part~II]{AY}, \cite[Chapter~11]{Akin}). 

We mention here the work \cite{CCZ} concerning the uniformity of the first Lyapunov exponent in a related setting.

\subsection{Relation with Sacker--Sell theory}\label{ss.SS}

Let $f$ be a homeomorphism of a compact metric space $X$. We assume that $f$ is \emph{invariantly connected}, i.e., the only invariant clopen sets are $\emptyset$ and $X$. This condition is satisfied if $f$ is transitive or $X$ is connected.

Let $\mathcal{A}$ be a continuous linear $d$-dimensional cocycle over $f$.
For each real number $\lambda$, we define a cocycle
\[
\mathcal{A}_\lambda (x,n) \coloneqq e^{-\lambda n} \mathcal{A}(x,n) \, .
\]
The \emph{Sacker--Sell spectrum} of $\mathcal{A}$ is the set 
\[
\mathrm{SS}(\mathcal{A}) \coloneqq 
\{ \lambda \in \R : \mathcal{A}_\lambda \text{ is not uniformly hyperbolic} \}.
\] 
(Here hyperbolic subbundles are allowed to be $\{0\}$.)
According to \cite[Theorem~2]{SS78} (see also \cite[Theorem~2.4]{JohnZamp}), the Sacker--Sell spectrum is a compact subset of $\R$ with at most $d$ connected components, that is, 
\begin{equation}\label{e.SS_intervals}
\mathrm{SS}(\mathcal{A}) = 
[\alpha_1,\beta_1] \cup \cdots \cup [\alpha_s,\beta_s] \, , \quad (s \le d).
\end{equation}
Furthermore, there exists an absolutely dominated splitting (see Section~\ref{ss.dominations} above)
into $s$ bundles,
\begin{equation}\label{e.SS_splitting}
\R^d = E_1(x) \oplus \cdots \oplus E_s(x) \, .
\end{equation}
As a consequence of domination (recall Proposition~\ref{p.DS_is_cont}), the subbundles vary continuously with $x$. 
As shown by Johnson, Palmer, and Sell \cite[Theorem~2.3]{JPS} the boundary points $\alpha_i$, $\beta_i$ of the connected components of the Sacker-Sell spectrum can be characterized in terms the Lyapunov exponents as follows. 
If we reindex the intervals in \eqref{e.SS_intervals} so that $\beta_1\ge\alpha_1 > \dots > \beta_s\ge\alpha_s$, then, for every $i$ we have
\begin{align}
\label{e.SS_beta}
\beta_i  &= \sup_{\mu} \tilde{\chi}_1(\mathcal{A}|E_i, \mu) \, , \\
\label{e.SS_alpha}
\alpha_i &= \inf_{\mu} \tilde{\chi}_{d_i}(\mathcal{A}|E_i, \mu) \, ,
\end{align}
where $\mu$ runs on the set of ergodic measures; furthermore, each sup and each inf are attained.
Using Proposition~\ref{p.SUSAET}, we can characterize these numbers as:
\begin{align*}
\beta_i  &= \lim_{n \to \infty} \sup_{x\in X} \frac{1}{n} \log \| \mathcal{A}(x,n)|E_i\| \, , \\
\alpha_i &= \lim_{n \to \infty} \inf_{x\in X} \frac{1}{n} \log m(\mathcal{A}(x,n)|E_i) \, .
\end{align*}

Note that the Sacker--Sell splitting \eqref{e.SS_splitting} is actually the \emph{finest} absolutely dominated splitting for the cocycle $\mathcal{A}$, that is, the absolutely dominated splitting into the maximal number of subbundles.

The Sacker--Sell spectrum is a discrete set if and only if $\alpha_i = \beta_i$ for each $i = 1,\dots,s$. Cocycles with discrete Sacker--Sell spectrum were studied by Johnson and Zampogni \cite{JohnZamp}. It turns out that this property coincides with complete regularity:

\begin{theorem}\label{t.equivalence}
A continuous cocycle $\mathcal{A}$ over an invariantly connected homeomorphism $f$ has discrete Sacker--Sell spectrum if and only if it is completely regular.
\end{theorem}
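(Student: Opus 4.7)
The plan is to prove both directions by exploiting the tight relationship between the Sacker--Sell splitting (the finest absolutely dominated splitting, whose existence and continuity follow from the theory cited) and the Oseledets splitting, together with the uniform convergence supplied by Proposition~\ref{p.one_exponent} and Theorem~\ref{t.uniformity}.

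For the direction ``completely regular $\Rightarrow$ discrete Sacker--Sell spectrum,'' I would first apply Corollary~\ref{c.DS} to obtain that the Oseledets splitting $\R^d = \bigoplus_{i=1}^s E_i^{O}(x)$ is absolutely dominated. Since the Sacker--Sell splitting \eqref{e.SS_splitting} is the \emph{finest} absolutely dominated splitting, each $E_i^O$ decomposes as a direct sum of Sacker--Sell subbundles $E_j^{SS}$. On each Oseledets piece $E_i^O$, Theorem~\ref{t.uniformity}\eqref{i.uniformity} says $\tfrac{1}{n}\log\|\mathcal{A}(x,n)|E_i^O\|$ and $\tfrac{1}{n}\log m(\mathcal{A}(x,n)|E_i^O)$ both converge uniformly to $\chi_i$. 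For any $E_j^{SS}\subset E_i^O$ we have the sandwich $m(\mathcal{A}(x,n)|E_i^O) \le m(\mathcal{A}(x,n)|E_j^{SS}) \le \|\mathcal{A}(x,n)|E_j^{SS}\| \le \|\mathcal{A}(x,n)|E_i^O\|$, so the characterizations of $\alpha_j,\beta_j$ immediately following \eqref{e.SS_alpha} force $\alpha_j = \beta_j = \chi_i$. Hence the Sacker--Sell spectrum is a finite set.

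For the converse direction, assume $\mathrm{SS}(\mathcal{A}) = \{\lambda_1,\dots,\lambda_s\}$ with $\lambda_1 > \cdots > \lambda_s$, and let \eqref{e.SS_splitting} be the associated absolutely dominated splitting. The formulas \eqref{e.SS_beta}--\eqref{e.SS_alpha} give $\beta_i = \alpha_i = \lambda_i$, so for \emph{every} ergodic $f$-invariant measure $\mu$,
\[
\lambda_i \;=\; \alpha_i \;\le\; \tilde\chi_{d_i}(\mathcal{A}|E_i,\mu) \;\le\; \tilde\chi_1(\mathcal{A}|E_i,\mu) \;\le\; \beta_i \;=\; \lambda_i.
\]
Thus all Lyapunov exponents of $\mathcal{A}|E_i$ at $\mu$-a.e.\ point equal $\lambda_i$. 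Applying Proposition~\ref{p.one_exponent} (in its vector bundle version, as in the proof of Theorem~\ref{t.uniformity}) to the restriction $\mathcal{A}|E_i$, we get that $\tfrac{1}{n}\log\|\mathcal{A}(x,n)|E_i(x)\|$ and $\tfrac{1}{n}\log m(\mathcal{A}(x,n)|E_i(x))$ both converge uniformly to $\lambda_i$ as $n\to\pm\infty$. Because the splitting is continuous with uniformly bounded-below angles between direct summands (consequence of domination, as noted after Proposition~\ref{p.DS_is_cont}), the angle condition in the definition of LP regularity is automatic at every point. Therefore every $x\in X$ is LP-regular with the common Lyapunov spectrum $\{(\lambda_i,\dim E_i)\}_{i=1}^s$, proving complete regularity.

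No step looks like a serious obstacle: once one knows that the Sacker--Sell splitting is the finest absolutely dominated splitting and that the endpoints $\alpha_i,\beta_i$ admit the uniform characterizations recalled in Section~\ref{ss.SS}, both directions become a matter of comparing uniform sub-/super-additive limits. The most delicate point to state cleanly is that in the forward direction the Oseledets and Sacker--Sell splittings actually coincide, since the argument above shows that no $E_i^O$ can split further under Sacker--Sell without creating two SS subbundles with the same value of $\alpha=\beta$, which would violate $\beta_j > \beta_{j+1}$ in the reindexing \eqref{e.SS_intervals}. Hence in both directions the discrete Sacker--Sell spectrum coincides with the Lyapunov spectrum, and the splittings agree.
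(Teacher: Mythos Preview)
Your proposal is correct and follows essentially the same approach as the paper: Corollary~\ref{c.DS} plus the identification of the Oseledets splitting with the finest absolutely dominated splitting for one direction, and the endpoint formulas \eqref{e.SS_beta}--\eqref{e.SS_alpha} together with Proposition~\ref{p.one_exponent} (applied bundle-wise) for the other. The paper is terser---it cites \cite{JohnZamp} for ``discrete SS $\Rightarrow$ completely regular'' and merely remarks that the implication also follows from the facts listed in Section~\ref{ss.SS}---but your explicit argument, including the observation that two Sacker--Sell subbundles inside a single Oseledets piece would share the same $\alpha=\beta$ value and hence cannot exist, is precisely what that remark intends.
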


\begin{proof}
The fact that discrete Sacker--Sell spectrum implies complete regularity is the content of \cite[Proposition~3.1]{JohnZamp}; it also follows directly from the fundamental facts listed above. For the converse implication, if $\mathcal{A}$ is completely regular with exponents $\chi_1 > \cdots > \chi_s$, then by Corollary~\ref{c.DS}, its Oseledets splitting is the finest absolutely dominated splitting, and by relations \eqref{e.SS_beta}, \eqref{e.SS_alpha} we have $\beta_i = \chi_i = \alpha_i$ for each $i$, showing that $\mathcal{A}$ has discrete Sacker--Sell spectrum. 
\end{proof}

There is another classical object associated to a linear cocycle called the \emph{Mather spectrum} (see \cite{Mather}, \cite[\S~3.1]{Pesin}), which is directly related to the Sacker-Sell spectrum (see \cite[Corollary~6.45]{ChiLat}).

\section{The Baire category of sets of regular points} \label{s.regular}

\subsection{Baire category} 

Recall that a subset $A$ in a topological space $X$ is called \emph{nowhere dense} if its closure $\bar{A}$ has empty interior. 
A subset $A\subset X$ is said to be of the \emph{1\textsuperscript{st}  Baire category} if it can be represented as a union of a countable collection of nowhere dense sets, and of  \emph{2\textsuperscript{nd}  Baire category} otherwise. Sets of 1\textsuperscript{st}  Baire category are also called \emph{meager}. Note that any subset of a meager set is a meager set. 

A subset $A \subset X$ is called \emph{residual} if its complement is meager. 
If $X$ is a complete metric space, then residual sets are of 2\textsuperscript{nd} Baire category. 
In particular, $X$ and every other dense $G_\delta$ set are of the second category. 

Like the collection of sets of probability zero, the collection of meager sets contains the empty set, is closed under countable unions, is hereditary, and is proper. Therefore, one can think of meager sets as being ``small.'' See \cite{Oxtoby} for a full discussion.

\subsection{Dynamically defined meager sets} 

We now describe a general mechanism for the occurrence of meager sets. 
Let $X$ be a complete metric space 
and $(\varphi_n)_{n>0}$ a sequence of real-valued continuous functions on $X$. Given numbers $\alpha,\beta\in\mathbb{R}$, define the following sets
\begin{align*}
I_\alpha	&\coloneqq	\big\{x\in X\colon\liminf\varphi_n(x)<\alpha\big\} \, ,\\
S_\beta		&\coloneqq	\big\{x\in X\colon\limsup\varphi_n(x)>\beta\big\} \, .
\end{align*}

\begin{theorem}\label{thm-seq-phin}
Assume that there are real numbers $\alpha<\beta$ such that the sets $I_\alpha$ and $S_\beta$ are both dense in $X$. Then the set of points $x\in X$ for which 
$\lim_{n\to\infty}\varphi_n(x)$ exists is meager. 
\end{theorem}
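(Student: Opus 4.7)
The plan is to show that the two sets $I_\alpha$ and $S_\beta$ are themselves dense $G_\delta$ sets, and then to apply the Baire category theorem to their intersection. At any point of $I_\alpha \cap S_\beta$, one has $\liminf \varphi_n(x) < \alpha < \beta < \limsup \varphi_n(x)$, so the limit fails to exist; hence the set where the limit exists lies in the complement of a residual set, and is therefore meager.

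The key observation is that the strict inequalities on $\liminf$ and $\limsup$ are witnessed by open conditions, which yields a $G_\delta$ description of $I_\alpha$ and $S_\beta$. Specifically, I would write
\[
I_\alpha = \bigcap_{N \ge 1} \bigcup_{n \ge N} \{x \in X : \varphi_n(x) < \alpha\},
\qquad
S_\beta = \bigcap_{N \ge 1} \bigcup_{n \ge N} \{x \in X : \varphi_n(x) > \beta\}.
\]
The equality for $I_\alpha$ uses that $\liminf_n \varphi_n(x) = \sup_N \inf_{n \ge N} \varphi_n(x) < \alpha$ if and only if $\inf_{n \ge N} \varphi_n(x) < \alpha$ for every $N$, i.e., for every $N$ some $n \ge N$ satisfies $\varphi_n(x) < \alpha$; the argument for $S_\beta$ is symmetric. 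By continuity of each $\varphi_n$, the inner sets $\{\varphi_n < \alpha\}$ and $\{\varphi_n > \beta\}$ are open, so each union over $n \ge N$ is open, and $I_\alpha$ and $S_\beta$ are both $G_\delta$.

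Since $I_\alpha$ and $S_\beta$ are by hypothesis dense, and $X$ is a complete metric space, the Baire category theorem gives that $I_\alpha \cap S_\beta$ is a dense $G_\delta$ subset of $X$, hence residual. Consequently, its complement is meager. Every point $x \in I_\alpha \cap S_\beta$ satisfies
\[
\liminf_{n \to \infty} \varphi_n(x) < \alpha < \beta < \limsup_{n \to \infty} \varphi_n(x),
\]
so $\lim_{n \to \infty} \varphi_n(x)$ does not exist. The set of $x$ where the limit exists is thus contained in $X \setminus (I_\alpha \cap S_\beta)$, a meager set, and meagerness is hereditary, so the claim follows.

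There is no serious obstacle; the only point that requires a moment of care is the $G_\delta$ description, since one might reflexively attempt to write $I_\alpha$ as an $F_\sigma$ union of closed sets $\{\inf_{n \ge N} \varphi_n \le \alpha'\}$ and lose density. Phrasing the strict inequality for $\liminf$ as the conjunction over $N$ of the open conditions ``some $n \ge N$ has $\varphi_n(x) < \alpha$'' is what makes the argument go through.
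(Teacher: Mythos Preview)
Your approach is the same as the paper's, but there is a slip in your $G_\delta$ description. The displayed equality
\[
I_\alpha \;=\; \bigcap_{N \ge 1} \bigcup_{n \ge N} \{x : \varphi_n(x) < \alpha\}
\]
is only an inclusion $\subset$. Your justification that ``$\sup_N \inf_{n\ge N}\varphi_n(x) < \alpha$ if and only if $\inf_{n\ge N}\varphi_n(x) < \alpha$ for every $N$'' fails in the $\Leftarrow$ direction: if $\inf_{n\ge N}\varphi_n(x) = \alpha - 1/N$ for each $N$, every term is $< \alpha$ but the supremum equals $\alpha$. So the right-hand set can contain points with $\liminf \varphi_n = \alpha$, and $I_\alpha$ itself need not be $G_\delta$.

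This does not break the argument, only the bookkeeping. Writing $\tilde I_\alpha$ for the right-hand side, you still have $I_\alpha \subset \tilde I_\alpha$, so $\tilde I_\alpha$ is dense and visibly $G_\delta$; likewise for $\tilde S_\beta$. On the residual set $\tilde I_\alpha \cap \tilde S_\beta$ one has $\liminf \varphi_n \le \alpha < \beta \le \limsup \varphi_n$, which already precludes convergence. The paper handles the same issue by inserting intermediate values $\alpha < \alpha' < \alpha'' < \beta'' < \beta' < \beta$ and sandwiching $I_\alpha \subset \tilde I_{\alpha'} \subset I_{\alpha''}$, which recovers a residual set with the \emph{strict} inequalities $\liminf < \alpha'' < \beta'' < \limsup$; this sharper form is used later in the paper. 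Your route is slightly leaner once the inclusion is stated correctly.
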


\begin{proof} 
Fix numbers $\alpha < \alpha' < \alpha'' < \beta'' < \beta' < \beta$.
Letting
\[
\tilde{I}_{\alpha'} \coloneqq \bigcap_{m=1}^\infty\bigcup_{n=m}^\infty\{x\in X\colon \varphi_n(x)<\alpha'\} \, ,
\]
we have $I_\alpha \subset \tilde{I}_{\alpha'} \subset I_{\alpha''}$.
The set $I_\alpha$ is dense, by assumption, and the set $\tilde{I}_{\alpha'}$ is a $G_\delta$.
Therefore the set $I_{\alpha''}$ contains a dense $G_\delta$, so it is residual. Similarly, $S_{\beta''}$ is residual. Thus, the set $I_{\alpha''} \cap S_{\beta''}$ is residual, and for any point in this set, $\lim_{n\to\infty}\varphi_n(x)$ does not exist.
\end{proof}

Without trying to be comprehensive, let us illustrate how Theorem~\ref{thm-seq-phin} can be applied to a simple dynamical setting. Suppose that $f : X \to X$ is a continuous transformation of a compact metric space. Given a (say) continuous function $\varphi : X \to \R$, consider the sequence of Birkhoff averages
\begin{equation*} 
\varphi_n(x) \coloneqq \frac{1}{n}\sum_{k=0}^{n-1}\varphi(f^k(x)) 
\end{equation*}
and let $\mathcal{R}_\varphi$ be the set of points $x\in X$ for which $\lim_{n\to\infty}\varphi_n(x)$ exists. 
By Birkhoff ergodic theorem, we know that this set has full measure with respect to any invariant probability measure and hence, is ``large'' from the measure-theoretical point of view.
But what can we say about the Baire category of this set?  

Following \cite{AAN}, a (perhaps noninvertible) continuous transformation $f$ of a compact metric space $X$ is called \emph{strongly transitive} if the \emph{backward orbit} $ \{f^{-n}(x) \colon n \ge 0\}$ of every point $x \in X$ is dense in $X$. 
Examples include any minimal homeomorphism, any transitive one-sided subshift of finite type, and the restriction of any rational map of $\bar{\mathbb{C}}$ to its Julia set (see \cite[Corollary 4.13]{Milnor}).

\begin{theorem}\label{t.dichotomy_Birkhoff}
Let $f \colon X \to X$ be a strongly transitive map, and let $\varphi \colon X \to \R$ be a continuous function. 
Then the set $\mathcal{R}_\varphi$ of points of convergence of Birkhoff averages is either a meager set or the whole space $X$. In the latter case, the Birkhoff averages of $\varphi$ with respect to $f$ converge uniformly to a constant. 
\end{theorem}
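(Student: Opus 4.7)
The plan is to apply Theorem~\ref{thm-seq-phin} after setting up a dichotomy based on whether $\varphi$ integrates to a constant across all invariant measures. Setting
\[
\alpha^* \coloneqq \inf_{\mu \in \mathcal{M}_f} \int \varphi \, d\mu, \qquad \beta^* \coloneqq \sup_{\mu \in \mathcal{M}_f} \int \varphi \, d\mu,
\]
both of which are attained at ergodic measures (by weak-$*$ compactness together with the ergodic decomposition), the first branch $\alpha^* = \beta^*$ is handled immediately by Corollary~\ref{c.maybe_Oxtoby}: the Birkhoff averages $\varphi_n$ converge uniformly to the constant $c \coloneqq \alpha^* = \beta^*$, whence $\mathcal{R}_\varphi = X$, and this is already the second alternative asserted by the theorem.

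In the remaining case $\alpha^* < \beta^*$ I would pick $\alpha < \beta$ with $\alpha^* < \alpha < \beta < \beta^*$ and aim to verify the hypotheses of Theorem~\ref{thm-seq-phin}, namely density of both $I_\alpha$ and $S_\beta$. Non-emptiness is easy: choose an ergodic $\mu_-$ with $\int \varphi \, d\mu_- = \alpha^*$; Birkhoff's theorem supplies some $y_- \in X$ with $\lim_n \varphi_n(y_-) = \alpha^* < \alpha$, so $y_- \in I_\alpha$, and analogously some $y_+ \in S_\beta$. To upgrade non-emptiness to density, I will use the elementary identity $n \varphi_n(z) = \varphi(z) + (n-1)\varphi_{n-1}(w)$ whenever $f(z)=w$, which together with boundedness of $\varphi$ yields
\[
\liminf_{n\to\infty} \varphi_n(z) = \liminf_{n\to\infty} \varphi_n(w) \qquad \text{and} \qquad \limsup_{n\to\infty} \varphi_n(z) = \limsup_{n\to\infty} \varphi_n(w).
\]
Consequently $I_\alpha$ and $S_\beta$ are each closed under taking $f$-preimages, and hence contain the entire backward orbits $\bigcup_{n \ge 0} f^{-n}(y_-)$ and $\bigcup_{n \ge 0} f^{-n}(y_+)$, respectively. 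Strong transitivity of $f$ makes these backward orbits dense in $X$, so $I_\alpha$ and $S_\beta$ are dense; Theorem~\ref{thm-seq-phin} then yields that $\mathcal{R}_\varphi$ is meager.

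There is no serious obstacle here; the argument is short once one observes that $\liminf_n \varphi_n$ and $\limsup_n \varphi_n$ are constant along preimage chains. The one conceptual point worth noting is why \emph{strong} transitivity is the appropriate hypothesis rather than mere transitivity: although these liminf/limsup functions are invariant under both forward and backward motion, the witnesses $y_\pm$ produced by Birkhoff's theorem need not have dense forward orbits, so density has to be propagated through preimages — and this is exactly what strong transitivity guarantees.
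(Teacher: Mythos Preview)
Your proof is correct and follows essentially the same route as the paper's: both arguments reduce to Theorem~\ref{thm-seq-phin} by producing two dense sets of points with distinct limiting Birkhoff behavior, using Birkhoff's theorem for existence and the backward-invariance of the relevant sets together with strong transitivity for density. The paper phrases the dichotomy contrapositively (``suppose the averages do not converge uniformly to a constant'') and works with the full convergence sets $\mathcal{B}_i = \{x : \varphi_n(x) \to \int \varphi\,d\mu_i\}$ rather than $I_\alpha$ and $S_\beta$ directly, but the underlying mechanism is identical.
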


\begin{proof}
Suppose that the Birkhoff averages do not converge uniformly to a constant. By Corollary~\ref{c.maybe_Oxtoby}, there exist two ergodic Borel probability measures $\mu_1$ and $\mu_2$ such that $\int \varphi \, d\mu_1 \neq \int \varphi \, d\mu_2$. For $i=1,2$, let $\mathcal{B}_i$ be the set of points $x$ such that $\varphi_n(x) \to \int \varphi \, d\mu_i$. These sets are nonempty and invariant. In particular, they contain full backward orbits. It follows from strong transitivity that both sets $\mathcal{B}_1$ and $\mathcal{B}_2$ are dense in $X$. It follows from Theorem~\ref{thm-seq-phin} that the set of convergence $\mathcal{R}_\varphi$ is meager.
\end{proof}

The literature contains many related results; we mention \cite[Theorem~4.3.(2)]{John78}, \cite[Theorem~11, p.~166]{Akin}, and \cite{CaVar}. The general message is that the set of points for which Birkhoff ergodic theorem holds, while large from a measure-theoretic viewpoint, is often meager. 

As it is reasonable to expect, similar phenomena for the multiplicative ergodic theorem of Oseledets occur in a variety of contexts: see for instance \cite[Theorem~3.14]{ABC}, \cite{Tian}, \cite{MdL}, \cite{Varandas}, and \cite[Corollary~C]{LinTian}. We now turn to our investigation of this matter, where the previously discussed concept of complete regularity will naturally enter into the picture. Our goal is to establish sufficient conditions under which the set $\mathcal{R}$ of LP-regular points is meager, thereby ensuring that the set of irregular points is topologically large. We note that the set of irregular points may also be large in terms of entropy \cite{CZZ} or Lebesgue measure \cite{KLNS}.

\subsection{A criterion for generic irregularity}

We now come back to the setting of continuous linear cocycles. 
We will formulate a sufficient condition for the set of LP-regular points to be meager. 
Our condition uses the notion of complete regularity studied above. 

Given a homeomorphism $f$ of a compact metric space $X$ and a nonempty subset  $Y\subset X$, we define the \emph{stable set} of $Y$ as
$$
W^\mathrm{s}(Y) = \big\{x\in X: d(f^n(x),Y)\to0 \text{ as } n\to+\infty \big\} \, .
$$ 
Now we can state the following result:

\begin{theorem}\label{thm-irreg-criterion}
Let $f$ be a homeomorphism of a compact metric space $X$ and let $\mathcal{A}$ be a continuous linear cocycle over $f$. 
Assume that there exist two $f$-invariant disjoint compact subsets $X_1, X_2\subset X$ such that
\begin{enumerate}
\item the stable sets $W^\mathrm{s}(X_1)$ and $W^\mathrm{s}(X_2)$ are both dense in $X$;
\item the restrictions of the cocycle $\mathcal{A}$ to $X_1$ and $X_2$, denoted $\mathcal{A}|X_1$ and $\mathcal{A}|X_2$, are both completely regular;
\item the cocycles $\mathcal{A}|X_1$ and $\mathcal{A}|X_2$ have different Lyapunov spectra, that is, $\mathrm{Sp}(\mathcal{A}|X_1)\ne\mathrm{Sp}(\mathcal{A}|X_2)$.
\end{enumerate} 
Then the set $\mathcal{FR}$  of points which are forward regular for the cocycle $\mathcal{A}$ is meager and, in particular, so is the set $\mathcal{R}$ of LP-regular points.
\end{theorem}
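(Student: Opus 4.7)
The plan is to apply Theorem~\ref{thm-seq-phin} to the continuous functions $\varphi_n(x) \coloneqq \frac{1}{n}\log\|\mathcal{A}^{\wedge i}(x,n)\|$ for a well-chosen $i \in \{1,\dots,d\}$. By Proposition~\ref{p.ep}, each restriction $\mathcal{A}^{\wedge i}|X_k$ is completely regular, with top Lyapunov exponent $\sigma_i^{(k)} \coloneqq \tilde\chi_1(\mathcal{A}|X_k)+\cdots+\tilde\chi_i(\mathcal{A}|X_k)$, by~\eqref{e.power-top}. The hypothesis $\mathrm{Sp}(\mathcal{A}|X_1)\ne\mathrm{Sp}(\mathcal{A}|X_2)$ forces the existence of some $i$ for which $\sigma_i^{(1)}\ne \sigma_i^{(2)}$; after relabeling, I assume $\sigma_i^{(1)}<\sigma_i^{(2)}$. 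By standard properties of forward regular points (cf.~\cite{BP}), $\varphi_n(x)$ converges at every forward regular point $x$ of $\mathcal{A}$, so it suffices to show that the set of points of convergence of $\varphi_n$ is meager.

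The heart of the argument is to establish the inclusions
\[
W^\mathrm{s}(X_1) \subset \bigl\{x:\liminf_n\varphi_n(x)\le \sigma_i^{(1)}\bigr\}, \qquad W^\mathrm{s}(X_2) \subset \bigl\{x:\limsup_n\varphi_n(x)\ge \sigma_i^{(2)}\bigr\}.
\]
Both inclusions combine two ingredients: Theorem~\ref{t.uniformity} applied to $\mathcal{A}^{\wedge i}|X_k$, which gives uniform convergence of $\frac{1}{m}\log\|\mathcal{A}^{\wedge i}(y,m)\|$ to $\sigma_i^{(k)}$ on $X_k$, together with continuity of $y\mapsto\mathcal{A}^{\wedge i}(y,m)$ for fixed $m$, which propagates this uniform estimate to a $\delta$-neighborhood of $X_k$. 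For the first inclusion, given $x\in W^\mathrm{s}(X_1)$ and $\varepsilon>0$, I would fix $m$ large with $\|\mathcal{A}^{\wedge i}(y,m)\|\le e^{m(\sigma_i^{(1)}+\varepsilon)}$ on the neighborhood, then choose $N$ so large that all iterates $f^{N+jm}(x)$ lie there. Submultiplicativity along the decomposition
\[
\mathcal{A}^{\wedge i}(x,N+km)=\mathcal{A}^{\wedge i}(f^{N+(k-1)m}(x),m)\cdots\mathcal{A}^{\wedge i}(f^N(x),m)\,\mathcal{A}^{\wedge i}(x,N)
\]
yields an upper bound on $\varphi_{N+km}(x)$ that tends to $\sigma_i^{(1)}+\varepsilon$ as $k\to\infty$. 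Sending $\varepsilon\to 0$ gives the first inclusion.

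The matching lower bound on $W^\mathrm{s}(X_2)$ is the main technical subtlety, since submultiplicativity only provides upper bounds. I would use the identity $\mathcal{A}^{\wedge i}(f^N(x),m)=\mathcal{A}^{\wedge i}(x,N+m)\,\mathcal{A}^{\wedge i}(x,N)^{-1}$ to obtain
\[
\|\mathcal{A}^{\wedge i}(x,N+m)\|\ge \|\mathcal{A}^{\wedge i}(f^N(x),m)\|\,\big\|\mathcal{A}^{\wedge i}(x,N)^{-1}\big\|^{-1}.
\]
Since $\|\mathcal{A}^{\wedge i}(x,N)^{-1}\|$ grows at most exponentially in $N$ (the generator is continuous on the compact space $X$), its logarithmic contribution per unit time vanishes whenever $m$ is taken much larger than $N$. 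Thus, fixing $N$ such that $f^N(x)$ is sufficiently close to $X_2$ and then letting $m\to\infty$ produces $\limsup_n\varphi_n(x)\ge \sigma_i^{(2)}-2\varepsilon$, and sending $\varepsilon\to 0$ yields the second inclusion.

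With both inclusions in hand, any $\alpha,\beta$ with $\sigma_i^{(1)}<\alpha<\beta<\sigma_i^{(2)}$ satisfy $I_\alpha\supset W^\mathrm{s}(X_1)$ and $S_\beta\supset W^\mathrm{s}(X_2)$, which are dense in $X$ by hypothesis. Theorem~\ref{thm-seq-phin} then gives meagerness of the set of convergence of $\varphi_n$, a set that contains $\mathcal{FR}$ and hence $\mathcal{R}$.
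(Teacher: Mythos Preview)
Your reduction to exterior powers and the upper-bound argument (the inclusion $W^\mathrm{s}(X_1)\subset\{\liminf\varphi_n\le\sigma_i^{(1)}\}$ via submultiplicativity) are correct and agree with the paper's strategy. The gap is in the lower bound on $W^\mathrm{s}(X_2)$.

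You propose ``fixing $N$ such that $f^N(x)$ is sufficiently close to $X_2$ and then letting $m\to\infty$.'' For this to give $\limsup_n\varphi_n(x)\ge\sigma_i^{(2)}-2\varepsilon$, you would need
\[
\|\mathcal{A}^{\wedge i}(f^N(x),m)\|\ge e^{m(\sigma_i^{(2)}-2\varepsilon)}\quad\text{for all large }m.
\]
But continuity only delivers this estimate for each \emph{fixed} $m$, on a neighborhood $U_m$ of $X_2$ that may shrink as $m$ grows; the fixed point $f^N(x)\notin X_2$ has no reason to lie in $U_m$ for all large $m$. In fact, since $W^\mathrm{s}(X_1)$ is dense, there are points arbitrarily close to $X_2$ whose $\limsup$ is at most $\sigma_i^{(1)}<\sigma_i^{(2)}$ (by your own upper-bound argument), so mere proximity to $X_2$ cannot force the lower bound. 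The natural alternative---fix $m$, choose $N$ so that all $f^{N+jm}(x)$ lie in $U_m$, and combine the blocks---fails as well: a product of operators each of norm at least $e^{m(\sigma_i^{(2)}-2\varepsilon)}$ can have arbitrarily small norm, so submultiplicativity points the wrong way.

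The paper supplies the missing idea in Lemma~\ref{lemma:7.1}, which actually proves that for every $x\in W^\mathrm{s}(Y)$ the \emph{limit} of $\tfrac1n\log\|\mathcal{A}(x,n)\|$ exists and equals the top exponent $\chi_1$ of $\mathcal{A}|Y$. The lower bound there exploits the structure of complete regularity: by Corollary~\ref{c.DS} the Oseledets splitting of $\mathcal{A}|Y$ is dominated, so one can build a strictly invariant continuous cone field around the top bundle $E_1$ and extend it to a neighborhood of $Y$. One then fixes a single vector $v$ in the cone at $x$ and tracks $\mathcal{A}(x,n)v$; the cone field forces these iterates to align asymptotically with $E_1(\cdot)$, where the uniform expansion rate $\chi_1-\varepsilon$ from Theorem~\ref{t.uniformity} is available. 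This alignment is precisely what restores multiplicativity in the lower bound, and it is the step your argument is missing.
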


The proof of this theorem will use Theorem~\ref{thm-seq-phin} above and also the following lemma:

\begin{lemma}\label{lemma:7.1}
Let $Y\subset X$ be a compact $f$-invariant subset such that the restriction of the cocycle $\mathcal{A}$ to $Y$ is completely regular with Lyapunov exponents $\chi_1>\cdots>\chi_s$. 
Then, for every $x\in W^\mathrm{s}(Y)$,
\begin{equation}\label{e.phin111}
\lim_{n\to\infty}\frac1n\log\|\mathcal{A}(x,n)\|=\chi_1 \, .
\end{equation}
\end{lemma}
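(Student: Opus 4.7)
The plan is to prove the two inequalities $\limsup_n \tfrac{1}{n}\log\|\mathcal{A}(x,n)\| \le \chi_1$ and $\liminf_n \tfrac{1}{n}\log\|\mathcal{A}(x,n)\| \ge \chi_1$ separately, since additivity of these limits is exactly what we want to establish.

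For the upper bound, I would first observe that, by complete regularity of $\mathcal{A}|Y$, the subadditive sequence $\varphi_n(y) = \log\|\mathcal{A}(y,n)\|$ satisfies $\varphi_n(y)/n \to \chi_1$ at every $y \in Y$. Proposition~\ref{p.SUSAET} applied to $\mathcal{A}|Y$ then upgrades this to $\sup_{y\in Y} \varphi_n(y)/n \to \chi_1$, so given $\varepsilon > 0$ there is $n_0$ with $\|\mathcal{A}(y,n_0)\| \le e^{(\chi_1+\varepsilon)n_0}$ for all $y \in Y$. By continuity of $\mathcal{A}(\cdot,n_0)$ and compactness of $Y$, this estimate extends with a slightly worse constant to an open neighborhood $U \supset Y$. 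Since $x \in W^\mathrm{s}(Y)$, some $f^M(x), f^{M+1}(x), \dots$ lie in $U$, and writing $n = M + q n_0 + r$ with $0 \le r < n_0$ and decomposing $\mathcal{A}(x,n)$ into the initial block of length $M$, $q$ blocks of length $n_0$ based in $U$, and a tail of length $r$, submultiplicativity gives $\tfrac{1}{n}\log\|\mathcal{A}(x,n)\| \le \chi_1 + 2\varepsilon + o(1)$, from which the upper bound follows by letting $\varepsilon \to 0$.

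For the lower bound I would use a cone-field argument. By Theorem~\ref{t.uniformity} and Corollary~\ref{c.DS}, on $Y$ we have a continuous and absolutely dominated Oseledets splitting $\mathbb{R}^d = E_1^Y(y) \oplus F^Y(y)$, where $F^Y \coloneqq \bigoplus_{i \ge 2} E_i^Y$, and $\tfrac{1}{n}\log m(\mathcal{A}(y,n)|E_1^Y(y)) \to \chi_1$ uniformly on $Y$. Extend $E_1^Y$ and $F^Y$ to continuous (but not necessarily invariant) transversal subbundles $\hat{E}_1,\hat{F}$ on some neighborhood $U$ of $Y$, and define the cone field $\mathcal{C}(z) = \{v+w : v \in \hat{E}_1(z),\, w \in \hat{F}(z),\, \|w\| \le \|v\|\}$. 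For a fixed $\varepsilon>0$, pick $n_0$ so that on $Y$ the cocycle $\mathcal{A}(\cdot,n_0)$ expands $E_1^Y$ by a factor at least $e^{(\chi_1 - \varepsilon)n_0}$ and dominates $F^Y$ by a factor at least $4$. By uniform continuity of $\mathcal{A}(\cdot,n_0)$ and of $\hat{E}_1,\hat{F}$, there is a smaller neighborhood $V \subset U$ of $Y$ on which, whenever both $z$ and $f^{n_0}(z)$ lie in $V$, we have $\mathcal{A}(z,n_0)\mathcal{C}(z) \subset \mathcal{C}(f^{n_0}(z))$ and $\|\mathcal{A}(z,n_0)u\| \ge \tfrac{1}{2}e^{(\chi_1 - \varepsilon)n_0}\|u\|$ for every $u \in \mathcal{C}(z)$.

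To finish, I would fix $M$ large enough that $f^k(x) \in V$ for all $k \ge M$, pick a unit vector $v_* \in \hat{E}_1(f^M(x))$, and consider its pullback $u_* = \mathcal{A}(x,M)^{-1} v_*$. For $n = M + qn_0 + r$ with $0 \le r < n_0$, iterated cone invariance yields $\|\mathcal{A}(x,n)u_*\| \ge C^{-1}\,2^{-q}\,e^{(\chi_1 - \varepsilon)qn_0}$, where $C$ is a uniform upper bound for $\|\mathcal{A}(\cdot,r)^{-1}\|$ over $X$ and $0 \le r < n_0$. Since $\|\mathcal{A}(x,n)\| \ge \|\mathcal{A}(x,n)u_*\|/\|u_*\|$, this gives $\liminf_n \tfrac{1}{n}\log\|\mathcal{A}(x,n)\| \ge \chi_1 - \varepsilon - \tfrac{\log 2}{n_0}$; taking $n_0$ large and then $\varepsilon \to 0$ completes the proof. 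The main obstacle is the cone-field setup: since $E_1^Y$ and $F^Y$ are only defined and invariant on $Y$, one must extend them continuously to a neighborhood and then check, using absolute domination on $Y$ together with uniform continuity, that the cone inclusion and expansion survive at points whose orbit revisits the neighborhood after one block of length $n_0$.
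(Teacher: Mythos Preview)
Your proposal is correct and follows essentially the same strategy as the paper: the upper bound via uniform convergence on $Y$ plus submultiplicativity, and the lower bound via a cone field around the dominating bundle $E_1$ extended continuously to a neighborhood of $Y$. The only organizational difference is in the lower bound: the paper uses the attraction property of the cone field (iterates of cone vectors align with $E_1$) and then compares the pair $(f^n(x), \mathcal{A}(x,n)v/\|\mathcal{A}(x,n)v\|)$ to a nearby pair $(y_n,u_n)$ with $y_n\in Y$ and $u_n\in E_1(y_n)$, applying the one-block estimate there; you instead keep track of cone invariance and a one-block expansion estimate directly on the neighborhood and iterate, picking up the harmless $2^{-q}$ factor. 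Both are standard ways to run the same cone argument. (As in the paper, the case $s=1$ should be mentioned separately, since then there is no cone field and the lower bound follows immediately from the uniform co-norm estimate.)
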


\begin{proof}
It follows from the definition of complete regularity and property \eqref{e.oseledets_norm} that the desired conclusion \eqref{e.phin111} holds for every point in $Y$. 
Actually, it follows from Theorem~\ref{t.uniformity} that the limit in \eqref{e.phin111} is uniform over $Y$.
In particular, given $\varepsilon>0$, we can find $N>0$ such that, for all $y \in Y$, 
\[
\frac{1}{N} \log\|\mathcal{A}(y,N)\| < \chi_1 + \varepsilon \, .
\] 
By continuity, if $d(x,Y)$ is sufficiently small, then 
\[
\frac{1}{N} \log\|\mathcal{A}(x,N)\| < \chi_1 + 2\varepsilon \, .
\] 
Then, as a straightforward consequence of submultiplicativity of norms, we have
\[
\limsup_{n \to \infty }\frac{1}{n} \log\|\mathcal{A}(x,n)\| \le \chi_1 + 2\varepsilon 
\]
for all $x \in W^\mathrm{s}(Y)$.
Since $\varepsilon>0$ is arbitrary, the limsup above is actually $\le \chi_1$ for all $x \in W^\mathrm{s}(Y)$.

In order to obtain an inequality in the reverse direction, 
fix $\varepsilon>0$ again.
Using Theorem~\ref{t.uniformity} we find $N>0$ such that, for all points $y \in Y$ and all unit vectors $u \in E_1(y)$,
\begin{equation}\label{e.point_vector}
\frac{1}{N} \log\|\mathcal{A}(y,N) u\| > \chi_1 - \varepsilon \, .
\end{equation}

Since the cocycle $\mathcal{A}$ restricted to the compact invariant set $Y$ admits a dominated splitting with dominating bundle $E_1$, we can define a family of cones $C(y) \subset \R^d$ depending continuously on $y \in Y$ with the following properties: for every $y \in Y$, we have $E_1(y) \subset C(y)$ and 
\[
\overline{\mathcal{A}(y,1)(C(y))} \subset \mathrm{int}(C(y))  \cup \{0\} \quad \text{(strict invariance).}
\]
Such a cone field is easily constructed using an adapted metric: see \cite{Gourmelon,CroPo}.
The subbundle attracts vectors in the cone field in the following sense:
for every $y \in Y$ and every unit vector $u \in C(y)$, 
\begin{equation}\label{e.attract}
\measuredangle \big( \mathcal{A}(y,n)u , E_1(f^n(y)) \big) \to 0 \text{ as } n \to \infty, 
\end{equation}
and this convergence is actually uniform with respect to $y$ and $u$.
We extend continuously the cone field to a neighborhood $U$ of $Y$ so that the strict invariance property is satisfied for points in $U \cap f^{-1}(U)$.

Fix $x\in W^\mathrm{s}(Y)$. 
Replacing $x$ by an iterate if necessary, we assume that $f^n(x) \in U$ for every 
$n \ge 0$. 
Fix any unit vector $v \in C(x)$.
It follows from property \eqref{e.attract} that there exist a sequence of points $y_n\in Y$ and a sequence of unit vectors $u_n \in E_1(y_n)$ such that 
\[
d(f^n(x), y_n)\to 0 \quad \text{and} \quad
\measuredangle(\mathcal{A}(x,n)v , u_n) \to 0 \quad \text{as } n \to +\infty \, .
\]
Inequality \eqref{e.point_vector} applies to each pair $(y,u) = (y_n, u_n)$.
For every sufficiently large $n$, the pair $\big(f^n(x),\frac{\mathcal{A}(x,n)v}{\|\mathcal{A}(x,n)v\|}\big)$ is close to $(y_n,u_n)$, and therefore 
\[
\frac{1}{N} \log \frac{\|\mathcal{A}(x, n+N) v \|}{\|\mathcal{A}(x, n) v \|} > \chi_1 - 2\varepsilon  \, .
\]
Now it is straightforward to check that
\[
\liminf_{n \to \infty }\frac{1}{n} \log\|\mathcal{A}(x,n) v\| \ge \chi_1 - 2\varepsilon  \, .
\]
Since $\varepsilon>0$ is arbitrary, the liminf above is actually $\ge \chi_1$ for all $x \in W^\mathrm{s}(Y)$.
The proof of the lemma is completed.
\end{proof}

\begin{proof}[Proof of Theorem \ref{thm-irreg-criterion}]
For each $j=1,2$, let $\tilde\chi_1^j\ge\cdots\ge\tilde\chi_d^j$ be the list of values of the Lyapunov exponent of the cocycle $\mathcal{A}|X_j$, where each value is repeated according to multiplicity.
By assumption, these two lists are not identical.

We first assume that $\tilde\chi_1^1\neq\tilde\chi_1^2$. 
For definiteness, assume that $\tilde\chi_1^1 > \tilde\chi_1^2$.
Consider the sequence of continuous functions 
\begin{equation}\label{e.sec-func-phin1}
\varphi_n(x)=\frac1n\log\|\mathcal{A}(x,n)\|, \quad x\in X \, .
\end{equation}
By Lemma \ref{lemma:7.1}, for each $j=1,2$, we have $\lim \varphi_n(x) = \tilde\chi_1^j$ for all $x \in W^\mathrm{s}(X_j)$. Each of these stable sets is dense, by hypothesis. 
Applying Theorem~\ref{thm-seq-phin} with $\alpha=\tilde\chi_1^2$ and $\beta=\tilde\chi_1^1$, we obtain that the set of points for which $\lim_{n\to\infty}\varphi_n(x)$ exists is meager and hence, so is its subset $\mathcal{FR}$ of forward regular points (see \cite{BP}). This completes the proof of the theorem in the particular case when $\tilde\chi_1^1 \neq \tilde\chi_1^2$. 

We now proceed with the general case and show how to reduce it to the previous one by using exterior powers. 
As explained in Subsection~\ref{ss.exterior}, a regular point $x\in X$ for the cocycle $\mathcal{A}$ is also a regular point for the exterior power cocycle $\mathcal{A}^{\wedge i}$. Now let $i$ be the smallest index such that  $\tilde\chi_i^1\neq\tilde\chi_i^2$, that is, $\tilde\chi_i(\mathcal{A}|X_1)\ne\tilde\chi_i(\mathcal{A}|X_2)$. Then, by formula \eqref{e.power-top}, we obtain that 
$$
\tilde\chi_1\big(\mathcal{A}^{\wedge i}|X_1\big) \ne \tilde\chi_1\big(\mathcal{A}^{\wedge i}|X_2\big) \, .
$$
Since the cocycle $\mathcal{A}$ is completely regular on subsets $X_1$ and $X_2$,  
the exterior power cocycle $\mathcal{A}^{\wedge i}$ is also completely regular on these sets (see Proposition~\ref{p.ep}). By the previous case, the set of forward regular points for the cocycle 
$\mathcal{A}^{\wedge i}$ is meager, and hence, so is the set of forward regular points for the cocycle 
$\mathcal{A}$. This completes the proof of the theorem.
\end{proof}

\subsection{Applications of the criterion} 

We present some corollaries of Theorem~\ref{thm-irreg-criterion}.

If $p$ is a periodic point for $f$, let $O(p)$ denote the orbit of $p$.
Given any linear cocycle $\mathcal{A}$ over $f$, the periodic point $p$ is LP-regular, and its Lyapunov exponents can be computed from the eigenvalues of the matrix $\mathcal{A}(p,k)$, where $k$ is the period of $p$.

\begin{corollary}\label{cor-criterion}
Let $f$ be a homeomorphism of a compact metric space $X$ and let $\mathcal{A}$ be a continuous linear cocycle over $f$. 
Suppose there are two periodic points $p,q\in X$ such that 
\begin{enumerate}
\item the stable sets $W^\mathrm{s}(O(p))$ and $W^\mathrm{s}(O(q))$ are both dense in $X$;
\item $\mathrm{Sp}(\mathcal{A},p)\ne\mathrm{Sp}(\mathcal{A},q)$.
\end{enumerate} 
Then the set $\mathcal{FR}$ of points which are forward regular for the cocycle $\mathcal{A}$ is meager and, in particular, so is the set $\mathcal{R}$ of LP-regular points.
\end{corollary}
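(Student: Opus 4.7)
The plan is to derive Corollary~\ref{cor-criterion} as a direct consequence of Theorem~\ref{thm-irreg-criterion}, taking $X_1 \coloneqq O(p)$ and $X_2 \coloneqq O(q)$.

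First I would verify the three hypotheses of Theorem~\ref{thm-irreg-criterion} for this choice. The sets $X_1$ and $X_2$ are finite, hence compact, and are $f$-invariant by definition of periodic orbits. They are disjoint: indeed, since the Lyapunov spectrum is invariant along an orbit (i.e., $\mathrm{Sp}(\mathcal{A}, x) = \mathrm{Sp}(\mathcal{A}, f^n(x))$ whenever $x$ is regular), the assumption $\mathrm{Sp}(\mathcal{A},p) \neq \mathrm{Sp}(\mathcal{A},q)$ forces $O(p) \neq O(q)$, and two periodic orbits are either equal or disjoint. The density of the stable sets $W^\mathrm{s}(X_1)$ and $W^\mathrm{s}(X_2)$ is exactly hypothesis~(1) of the corollary. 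The inequality $\mathrm{Sp}(\mathcal{A}|X_1) \neq \mathrm{Sp}(\mathcal{A}|X_2)$ is hypothesis~(2).

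The only nontrivial item left is complete regularity of $\mathcal{A}|X_j$. This is precisely the content of Example~\ref{ex1}: restricted to a single periodic orbit the base dynamics is a cyclic permutation of a finite set, every point is LP-regular (the Lyapunov exponents being computable from the eigenvalues of the period-$k$ return matrix), and the Lyapunov spectrum is constant along the orbit. Hence both cocycles $\mathcal{A}|X_1$ and $\mathcal{A}|X_2$ are completely regular, and all three hypotheses of Theorem~\ref{thm-irreg-criterion} are fulfilled.

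Invoking Theorem~\ref{thm-irreg-criterion} then yields that the set $\mathcal{FR}$ of forward-regular points is meager, and since $\mathcal{R} \subset \mathcal{FR}$, the set $\mathcal{R}$ of LP-regular points is meager as well. I do not anticipate a genuine obstacle here; the proof is essentially a bookkeeping exercise and the only point that requires a moment of thought is the disjointness of the two orbits, which is guaranteed automatically by the spectral inequality.
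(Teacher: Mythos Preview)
Your proposal is correct and follows exactly the paper's approach: invoke Example~\ref{ex1} to see that the restriction of $\mathcal{A}$ to any periodic orbit is completely regular, then apply Theorem~\ref{thm-irreg-criterion} with $X_1 = O(p)$ and $X_2 = O(q)$. Your explicit verification of the disjointness of $O(p)$ and $O(q)$ via the spectral inequality is a detail the paper leaves implicit, but it is indeed needed and your argument for it is correct.
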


\begin{proof} 
Recall from Example~\ref{ex1} that the restriction of the cocycle $\mathcal{A}$ to any periodic orbit is completely regular. The result now follows from Theorem~\ref{thm-irreg-criterion}.
\end{proof}

There are situations where Theorem~\ref{thm-irreg-criterion} applies, but Corollary~\ref{cor-criterion} does not. For example, suppose $\varphi^t \colon M \to M$ is a transitive Anosov flow of a compact manifold. If $t_0>0$ is small enough, then $f \coloneqq \varphi^{t_0}$ has no periodic points, so we are out of the scope of Corollary~\ref{cor-criterion}. Now, suppose $x_1$ and $x_2$ are periodic points for the flow, and let $X_1$ and $X_2$ be the respective orbits. Then the restrictions $f|X_1$ and $f|X_2$ are conjugate to irrational rotations. If $\mathcal{A}$ is any continuous cocycle over $f$ whose restrictions $\mathcal{A}|X_1$ and $\mathcal{A}|X_2$ are completely regular but have different Lyapunov spectra, then its set $\mathcal{R}$ of LP-regular points is meager in $M$. Indeed, the stable sets $W^\mathrm{s}(X_1)$ and $W^\mathrm{s}(X_2)$ are dense (see e.g.\ \cite[Theorem~2.6.10]{FisherH}), so Theorem~\ref{thm-irreg-criterion} applies.

\medskip

On the other hand, there is a particular setting of the dynamics in which the denseness hypothesis in Corollary~\ref{cor-criterion} is naturally satisfied, namely homoclinic classes. We recall the definition.

Let $f: M\to M$ be a $C^1$ diffeomorphism of a compact smooth Riemannian manifold $M$. Let also $p,q\in M$ be two hyperbolic periodic points. 
We say that $p$ and $q$ are \emph{homoclinically related} and write $q\sim p$ if the sets of transverse intersection points $W^\mathrm{u}(O(p))\pitchfork W^\mathrm{s}(O(q))$ and $W^\mathrm{s}(O(p))\pitchfork W^\mathrm{u}(O(q))$ are both nonempty. This turns out to be an equivalence relation (see \cite[Proposition~2.1]{Newhouse}), and every equivalence class is an invariant subset. The \emph{homoclinic class} of $p$ is defined to be the closure of the equivalence class of $p$. (We note that two homoclinic classes may intersect without being identical.) 
Furthermore (see \cite[Corollary~2.7]{Newhouse}), 
\begin{equation}\label{Newhouse}
H_p=\overline{W^\mathrm{u}(O(p))\pitchfork W^\mathrm{s}(O(p))}.
\end{equation}
In particular, for any periodic point $q \sim p$ the set $W^\mathrm{s}(O(q))$ is dense in $H_p = H_q$. Thus Corollary~\ref{cor-criterion} implies the following result:

\begin{corollary}\label{cor1}
Let $f$ be a $C^1$ diffeomorphism, let $p$ be a hyperbolic periodic point, and let $H_p$ be its homoclinic class.
Let $\mathcal{A}$ be any continuous cocycle on $H_p$.
Assume that there exists another periodic point $q$ which is homoclinically related to $p$ such that $\mathrm{Sp}(\mathcal{A}|O(p))\ne\mathrm{Sp}(\mathcal{A}|O(q))$. Then the set $\mathcal{R}$ of LP-regular points for $\mathcal{A}$ is meager in the relative topology of $H_p$.
\end{corollary}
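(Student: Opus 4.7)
The plan is to deduce Corollary~\ref{cor1} directly from Corollary~\ref{cor-criterion} applied to the restriction of $f$ to the compact $f$-invariant set $H_p$, with the two periodic points being $p$ and $q$ themselves. The hypothesis $\mathrm{Sp}(\mathcal{A}|O(p))\neq\mathrm{Sp}(\mathcal{A}|O(q))$ is exactly condition~(2) of Corollary~\ref{cor-criterion}, so the only thing to verify is that both dynamical stable sets $W^\mathrm{s}(O(p))$ and $W^\mathrm{s}(O(q))$ are dense in $H_p$ (in its relative topology).

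For this I would invoke the Newhouse identity \eqref{Newhouse}, namely $H_p=\overline{W^\mathrm{u}(O(p))\pitchfork W^\mathrm{s}(O(p))}$. Every transverse homoclinic intersection point has its forward orbit converging to $O(p)$, so it lies in the dynamical stable set $W^\mathrm{s}(O(p))$ as defined in Section~\ref{s.regular}. Therefore $W^\mathrm{s}(O(p))\cap H_p$ is already dense in $H_p$. Because $\sim$ is an equivalence relation, $q\sim p$ forces $H_q=H_p$, and applying \eqref{Newhouse} at $q$ in place of $p$ yields density of $W^\mathrm{s}(O(q))\cap H_p$ in $H_p$ by the same argument. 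This is the sentence that was already spelled out in the excerpt right before the statement of the corollary.

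With both hypotheses of Corollary~\ref{cor-criterion} in place, its conclusion gives meagerness of the set of LP-regular points for $\mathcal{A}|H_p$, which is exactly the claim of Corollary~\ref{cor1}. I do not anticipate a genuine obstacle here, since the substantive content has already been established: in Corollary~\ref{cor-criterion} itself (which ultimately rests on Theorem~\ref{thm-irreg-criterion} and the uniform convergence provided by Theorem~\ref{t.uniformity}) and in the classical identity \eqref{Newhouse}. The only point that demands a brief word is that we are applying Corollary~\ref{cor-criterion} with ambient space $H_p$ rather than the whole manifold, so all topological notions (denseness, meagerness) are interpreted in the relative topology of $H_p$ throughout, matching precisely the wording of the conclusion.
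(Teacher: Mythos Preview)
Your proposal is correct and matches the paper's own argument essentially line for line: the paper also derives Corollary~\ref{cor1} directly from Corollary~\ref{cor-criterion}, invoking the Newhouse identity~\eqref{Newhouse} and the fact that $H_p=H_q$ (from $q\sim p$) to obtain density of $W^\mathrm{s}(O(p))$ and $W^\mathrm{s}(O(q))$ in $H_p$. Your remark about working in the relative topology of $H_p$ is the only additional clarification beyond what the paper states explicitly.
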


Of course, the most natural application of the result above is for the derivative cocycle restricted to the homoclinic class.

\medskip

For the sake of completeness, let us describe here a related result by Tian \cite{Tian}.
Let $f$ be a homeomorphism of a compact metric space $X$. We say that $f$ has \emph{exponential specification property with exponent $\lambda> 0$} if for any $\delta> 0$ there is $N=N(\delta) > 0$ such that for any $k\ge 1$, any $k$ points $x_1,x_2,\dots,x_k\in X$, any integers $a_1\le b_1 <a_2\le b_2 <\ldots<a_k\le b_k$ with $a_{i+1}-b_i\ge N$, one can find a point $y\in X$ such that 
$$
d(f^i(y),f^i(x_j))<\delta e^{-\lambda\min\{i-a_i,b_j-i\}} 
$$
for $a_j\le i\le b_j$ and $1\le j\le k$.

\begin{theorem}[{see \cite[Theorem 1.4]{Tian}}] \label{t.Tian}
Let $\mathcal{A}$ be a H\"older continuous cocycle over a homeomorphism $f$ with the exponential specification property. Then 
either the set $\mathcal{R}$ of LP-regular points is meager in $X$
or the cocycle has the same Lyapunov spectrum with respect to all ergodic $f$-invariant measures. 
\end{theorem}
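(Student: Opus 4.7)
The plan is to argue by contrapositive: assume there exist two ergodic $f$-invariant Borel probability measures $\mu_1,\mu_2$ with $\mathrm{Sp}(\mathcal{A},\mu_1)\neq\mathrm{Sp}(\mathcal{A},\mu_2)$ and show that $\mathcal{R}$ is meager by producing two periodic orbits meeting the hypotheses of Corollary~\ref{cor-criterion}.

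The first key ingredient is a Kalinin-style periodic approximation of Lyapunov spectra. I would prove that under exponential specification with H\"older continuous cocycles, for every ergodic $\mu$ and $\varepsilon>0$ there exists a periodic point $p$ with $\mathrm{Sp}(\mathcal{A},p)$ within $\varepsilon$ of $\mathrm{Sp}(\mathcal{A},\mu)$. Pick a $\mu$-generic point $x$ and an integer $n$ so large that, for every $i=1,\dots,d$, the number $\frac{1}{n}\log\|\mathcal{A}^{\wedge i}(x,n)\|$ approximates $\tilde{\chi}_1(\mathcal{A},\mu)+\cdots+\tilde{\chi}_i(\mathcal{A},\mu)$ to within $\varepsilon/2$ (Oseledets together with \eqref{e.power-top}). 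Exponential specification then yields a periodic point $p$ whose orbit $\delta e^{-\lambda\min(j,n-j)}$-shadows the segment $x,f(x),\dots,f^{n-1}(x)$. A standard telescoping estimate using H\"older continuity of $\mathcal{A}$ produces bounded invertible matrices $C_0,C_n$, with bounds independent of $n$, such that $\mathcal{A}(p,n)=C_n\mathcal{A}(x,n)C_0^{-1}$; since each exterior power $\mathcal{A}^{\wedge i}$ is H\"older with the same exponent, the same estimate applies after taking exterior powers. Consequently $\frac{1}{n}\log\|\mathcal{A}^{\wedge i}(p,n)\|$ differs from $\frac{1}{n}\log\|\mathcal{A}^{\wedge i}(x,n)\|$ by $O(1/n)$, so the Lyapunov exponents of $p$ in each exterior power are $\varepsilon$-close to those of $\mu$; reconstructing $\mathrm{Sp}(\mathcal{A},p)$ via \eqref{e.ex-power-lp} finishes the approximation.

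The second ingredient is that exponential specification implies $W^\mathrm{s}(O(p))$ is dense in $X$ for every periodic $p$. Given $x\in X$ and $\delta>0$, I would apply specification to the singleton segment $\{x\}$ at time $0$ and segments of $p$'s orbit of the form $\{f^i(p):N\le i\le kN\}$ for increasing $k$, and pass to a convergent subsequence of the resulting shadowing points. The limit $y$ satisfies $d(y,x)<\delta$ and $d(f^i(y),f^i(p))\le \delta e^{-\lambda(i-N)}$ for all $i\ge N$, so $y\in W^\mathrm{s}(O(p))$. Combining the two ingredients, I pick periodic points $p_1,p_2$ with $\mathrm{Sp}(\mathcal{A},p_j)$ close to $\mathrm{Sp}(\mathcal{A},\mu_j)$ for $j=1,2$, chosen so these spectra remain distinct; both stable sets are dense; Corollary~\ref{cor-criterion} then yields that $\mathcal{FR}$, and hence $\mathcal{R}$, is meager.

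The main obstacle is the periodic approximation of the entire Lyapunov spectrum. For the top exponent alone, exponential shadowing plus subadditivity essentially does the job. Extending to every exponent requires working with all exterior powers $\mathcal{A}^{\wedge i}$ simultaneously and establishing the existence of uniformly bounded conjugating matrices $C_0,C_n$, a Livsic-type construction that requires careful compatibility between the H\"older exponent $\alpha$ of $\mathcal{A}$, the shadowing exponent $\lambda$, and the exponential growth rate of the cocycle along $\mu$-generic orbits. This compatibility is the reason exponential (rather than ordinary) specification is needed in the hypothesis. Once the approximation is in place, the rest of the argument reduces to a direct invocation of Corollary~\ref{cor-criterion}.
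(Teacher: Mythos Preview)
There is a genuine gap in your approach, and the paper itself flags exactly this issue. The exponential specification property, as defined here, only guarantees the existence of a shadowing point $y\in X$, not a \emph{periodic} shadowing point. The paper explicitly remarks (immediately after stating Theorem~\ref{t.Tian}) that ``a homeomorphism $f$ with the exponential specification property may fail to have periodic orbits (see \cite[Example~2.4.(2)]{Tian}). Thus we cannot use Corollary~\ref{cor-criterion} to derive Theorem~\ref{t.Tian}.'' Your entire strategy --- producing periodic points $p_1,p_2$ whose spectra approximate those of $\mu_1,\mu_2$ and then invoking Corollary~\ref{cor-criterion} --- therefore collapses at the first step: there may be no periodic points at all to work with. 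The Kalinin-style closing lemma you outline presupposes a periodic version of specification, which is a strictly stronger hypothesis.

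Note also that the paper does not prove Theorem~\ref{t.Tian}; it is quoted from \cite{Tian} for the sake of comparison. Tian's own proof does use Kalinin's H\"older estimates (as the paper notes), but applies them along shadowing orbits that need not close up, combined with a direct irregularity argument rather than a reduction to periodic data. If you want to salvage your approach, you would need to replace the compact sets $X_1,X_2$ in Theorem~\ref{thm-irreg-criterion} by something other than periodic orbits --- for instance, by supports of ergodic measures --- but then you lose the easy complete regularity of the restricted cocycle, which is precisely what makes the periodic case so convenient.
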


A homeomorphism $f$ with the exponential specification property may fail to have periodic orbits (see \cite[Example~2.4.(2)]{Tian}). Thus we cannot use Corollary~\ref{cor-criterion} to derive Theorem~\ref{t.Tian}.
Note that Theorem~\ref{thm-irreg-criterion} and its corollaries apply to all continuous cocycles. In contrast, Theorem~\ref{t.Tian} requires H\"older continuity of the cocycle, since its proof employs estimates from \cite{Kalinin} that rely crucially on this stronger regularity assumption.

\subsection{All-or-meager results}

Here is a related result, this time following from Corollary~\ref{cor-criterion} combined with Theorem~\ref{t.amenable_reduction}:

\begin{theorem}\label{t.dichotomy}
Let $\mathcal{A}$ be a H\"older continuous cocycle over a transitive Anosov $C^{1+\delta}$ diffeomorphism $f$ of a compact smooth manifold $X$.
Then either the set $\mathcal{R}$ of LP-regular points is meager in $X$
or $\mathcal{R} = X$ and the cocycle is completely regular.
\end{theorem}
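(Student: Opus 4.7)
The plan is to split according to whether all periodic orbits of $f$ share the same Lyapunov spectrum with respect to $\mathcal{A}$. In the affirmative case, Theorem~\ref{t.amenable_reduction} (the implication \eqref{i.tar_3} $\Rightarrow$ \eqref{i.tar_1}) applies directly, since $f$ is a transitive $C^{1+\delta}$ Anosov diffeomorphism and $\mathcal{A}$ is H\"older continuous (viewed as an automorphism of the trivial bundle $X \times \R^d$). The conclusion is that $\mathcal{A}$ is completely regular; in particular every point is LP-regular, so $\mathcal{R} = X$, placing us in the second alternative of the theorem.

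Suppose instead that there exist periodic points $p,q\in X$ with $\mathrm{Sp}(\mathcal{A},p)\neq \mathrm{Sp}(\mathcal{A},q)$. I would then appeal to Corollary~\ref{cor-criterion}, which reduces the matter to verifying that the stable sets $W^\mathrm{s}(O(p))$ and $W^\mathrm{s}(O(q))$ are dense in $X$. For a transitive Anosov diffeomorphism the nonwandering set equals $X$ (immediate from the existence of a dense orbit), so Smale's spectral decomposition collapses to a single basic set, which in turn is the homoclinic class of any of its hyperbolic periodic points. Newhouse's identity \eqref{Newhouse} then yields $X = H_p = \overline{W^\mathrm{u}(O(p)) \pitchfork W^\mathrm{s}(O(p))} \subset \overline{W^\mathrm{s}(O(p))}$, and similarly for $q$. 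Corollary~\ref{cor-criterion} therefore applies and gives meagerness of $\mathcal{R}$; one can equivalently invoke Corollary~\ref{cor1} directly, since $p$ and $q$ are homoclinically related and $H_p = X$.

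All the serious work has been done upstream: Theorem~\ref{t.amenable_reduction} supplies the rigid structure whenever periodic spectra coincide, and Corollary~\ref{cor-criterion} extracts meagerness of $\mathcal{R}$ whenever they do not. The only glue between the two is the classical topological fact that a transitive Anosov system has $X$ as a single homoclinic class, so no real obstacle is expected; the main conceptual point is to recognize that the dichotomy on periodic spectra is precisely what these two results are engineered to exploit in tandem.
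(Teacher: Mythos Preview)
Your argument is correct and matches the paper's own reasoning: the paper states that Theorem~\ref{t.dichotomy} follows from Corollary~\ref{cor-criterion} combined with Theorem~\ref{t.amenable_reduction}, without spelling out the details, and your dichotomy on periodic spectra is exactly the intended glue. Your justification that $X$ is a single homoclinic class (hence stable sets of periodic orbits are dense) is a welcome elaboration of what the paper leaves implicit.
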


Notice the sharp contrast between the two alternatives: either the set $\mathcal{R}$ is the whole space, or it is a meager subset. We call statements with this type of conclusion \emph{all-or-meager} results. Another example is Theorem~\ref{t.dichotomy_Birkhoff}.

Let us look for other results of this type.
Optimistically, one could ask: are the conclusions of Theorem~\ref{t.dichotomy} valid for any continuous linear cocycle over any homeomorphism of a compact space? The answer is negative, as the following example shows.

\begin{example}
Let $g : Y \to Y$ be a homeomorphism of a compact metric space $Y$ and let $\mathcal{B}$ be a linear cocycle over $g$, with generator $B$.
We assume two conditions: 
First, $g$ admits a fully-supported Borel probability measure $\mu$.
Second, the set $\mathcal{R}_{\mathcal{B}}$ of LP-regular points is not the whole space $Y$. 
Examples satisfying these conditions can be found using Theorem~\ref{t.dichotomy}, for instance. 
Note that $\mu$-almost every point in~$Y$ has alpha- and omega-limit sets both equal to $Y$. 
Fix a point $y_0$ with these properties which is also an element of the full-measure set $\mathcal{R}_\mathcal{B}$.
Let $y_n \coloneqq g^n(y_0)$; then, for any $N>0$, the sets $\{y_n : n > N\}$ and $\{y_n : n < -N\}$ are both dense in~$Y$. Now fix an increasing two-sided sequence $(t_n)_{n \in \Z}$ such that $t_n \to 0$ as $n \to -\infty$ and $t_n \to 1$ as $n \to +\infty$.  
Define the following subsets of $Y \times [0,1]$:
\[
Z \coloneqq \{(y_n, t_n) : n \in \Z \} \quad \text{and} \quad 
X \coloneqq (Y \times \{0,1\}) \cup Z \, .
\]
Then $\overline{Z} = X$ and in particular $X$ is compact.
Also note that each point in $Z$ has a neighborhood disjoint from $Y \times \{0,1\}$ and in particular $Z$ is a relatively open subset of $X$.
Define a map $f \colon X \to X$ as follows:
\[
f(y,t) \coloneqq (g(y),t) \text{ if } t=0,1, \quad
f(y_n,t_n) \coloneqq (y_{n+1},t_{n+1}) \, .
\]
Then $f$ is actually a homeomorphism of $X$, and the subset $Z$ is the orbit of $(y_0,t_0)$.
Finally, define a cocycle $\mathcal{A}$ over $f$ whose generator is $\mathcal{A}(y,t) \coloneqq B(y)$.
Then the set $\mathcal{R}_\mathcal{A}$ of LP-regular points is not the whole space $X$, as its complement includes $(Y\setminus \mathcal{R}_\mathcal{B}) \times \{0,1\}$. 
On the other hand, since the set $\mathcal{R}_\mathcal{B}$ contains the point $y_0$, the set $\mathcal{R}_\mathcal{A}$ contains the point $(y_0,t_0)$, as well as its orbit $Z$.
Since $Z$ is an open and dense subset of $X$, it follows that $\mathcal{R}_\mathcal{A}$ is not a meager set.
Thus, we have exhibited an example where the all-or-meager dichotomy fails.
\end{example}

The example above is somewhat unsatisfactory because the map $f$ has wandering points. 

Returning to the case of transitive Anosov diffeomorphisms, it is an open problem whether Theorem~\ref{t.dichotomy} extends to cocycles that are merely continuous rather than Hölder continuous.

On the other hand, if we assume that the dynamics is \emph{minimal}, then we have the following all-or-meager dichotomy that applies to all continuous linear cocycles:

\begin{theorem}\label{t.dichotomy_minimal}
Let $\mathcal{A}$ be a continuous cocycle over a minimal homeomorphism~$f$ of a compact metric space.
Then either the set $\mathcal{R}$ of LP-regular points is meager in $X$
or $\mathcal{R} = X$ and the cocycle is completely regular.
\end{theorem}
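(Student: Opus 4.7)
The plan is by contrapositive: assume $\mathcal{R}$ is non-meager and derive that $\mathcal{A}$ is completely regular.

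First I would establish that the Lyapunov spectrum is constant across all ergodic invariant measures. For each $i\in\{1,\dots,d\}$, consider the continuous subadditive sequence $\varphi_n^{(i)}(x):=\tfrac{1}{n}\log\|\mathcal{A}^{\wedge i}(x,n)\|$. Every $x\in\mathcal{R}$ is LP-regular for $\mathcal{A}^{\wedge i}$ (Section~\ref{ss.exterior}), and there $\varphi_n^{(i)}$ converges to $\tilde\chi_1(\mathcal{A}^{\wedge i},x)$ by \eqref{e.oseledets_norm}, so $\mathcal{R}$ lies in the convergence set of $\varphi_n^{(i)}$. If $\tilde\chi_1(\mathcal{A}^{\wedge i},\mu)$ took distinct values at two ergodic measures $\mu_1,\mu_2$, the corresponding invariant level sets---nonempty by Kingman's theorem---would be dense by minimality, and Theorem~\ref{thm-seq-phin} would then force the convergence set of $\varphi_n^{(i)}$ to be meager, contradicting the non-meagerness of $\mathcal{R}$. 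Hence $\tilde\chi_1(\mathcal{A}^{\wedge i},\mu)$ is independent of $\mu$ for every $i$, and by \eqref{e.power-top} so is each $\tilde\chi_j$. Denote the common spectrum by $(\chi_i,d_i)_{i=1}^s$.

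Next I would promote this constancy to full complete regularity by induction on the cocycle dimension, using the vector-bundle form of the theorem (Section~\ref{ss.bundle}). The base $d=1$ follows at once from Proposition~\ref{p.one_exponent}. Inductively, consider the Sacker--Sell decomposition $\R^d=E_1\oplus\cdots\oplus E_{s'}$ with intervals $[\alpha_k,\beta_k]$ (Section~\ref{ss.SS}); formulas \eqref{e.SS_beta}--\eqref{e.SS_alpha} together with the constancy just obtained show that $\beta_k$ and $\alpha_k$ coincide with the top and bottom Lyapunov exponents of $\mathcal{A}|E_k$, both independent of the ergodic measure. If $s'\ge 2$, each $\dim E_k$ is strictly less than $d$; since $\mathcal{R}(\mathcal{A}|E_k)\supseteq\mathcal{R}$ remains non-meager, the inductive hypothesis applied to each restriction yields complete regularity of $\mathcal{A}|E_k$, and Corollary~\ref{c.DS} then assembles these into complete regularity of $\mathcal{A}$. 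If $s'=1$ and $\alpha_1=\beta_1$, all Lyapunov exponents of $\mathcal{A}$ coincide and Proposition~\ref{p.one_exponent} delivers complete regularity directly.

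The remaining case $s'=1$ with $\alpha_1<\beta_1$ is where the main obstacle lies: the cocycle has multiple constant Lyapunov exponents at every ergodic measure but admits no nontrivial continuous invariant dominated splitting. My plan is to derive a contradiction with non-meagerness of $\mathcal{R}$. Mimicking the argument of Lemma~\ref{lemma:7.1}, the contrapositive of Theorem~\ref{thm-seq-phin} combined with minimality of $f$ gives $\liminf_n\varphi_n^{(i)}(x)\ge c_i:=\tilde\chi_1+\cdots+\tilde\chi_i$ at every $x\in X$ for every $i$; combined with the SUSAET upper bound $\sup_x\varphi_n^{(i)}\to c_i$, this shows that each singular value $\sigma_j(\mathcal{A}(x,n))$ grows pointwise at the constant rate $\tilde\chi_j$. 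The target is then to use this pointwise information, together with minimality, to force the almost-everywhere-defined top Oseledets subbundle to extend continuously, producing via Proposition~\ref{p.Mane} a dominated splitting and thereby contradicting the triviality of the finest absolutely dominated splitting. The hard part will be bridging the gap from pointwise to uniform (or continuous) convergence, as the modulus of continuity of $\mathcal{A}^{\wedge i}(\cdot,n)$ can degrade exponentially in $n$.
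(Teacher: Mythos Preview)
Your first two steps are sound. Constancy of the spectrum across ergodic measures follows as you argue, and the inductive reduction via a continuous invariant splitting works---though Corollary~\ref{c.DS} is mis-cited (it goes the other way; the assembly step is the elementary observation that a continuous dominated splitting into completely regular pieces is itself completely regular). It is cleaner to split along the finest \emph{dominated} splitting rather than Sacker--Sell, as the paper does, since the tool you are missing is stated for dominated splittings.

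The genuine gap is precisely where you flag it: no nontrivial dominated splitting, yet $s\ge 2$ distinct exponents. Your pointwise conclusion $\tfrac{1}{n}\log\sigma_j(\mathcal{A}(x,n))\to\tilde\chi_j$ at \emph{every} $x$ is correct, but the proposed continuation---extend the Oseledets bundle continuously and invoke Proposition~\ref{p.Mane}---has no clear path, and the modulus-of-continuity obstacle you identify is real. The paper resolves this with a different mechanism, Theorem~\ref{t.zero_liminf}: under minimality, the absence of a dominated splitting of index $k$ forces a \emph{residual} set on which $\liminf_n \tfrac{1}{n}\log\big[\sigma_k(\mathcal{A}(x,n))/\sigma_{k+1}(\mathcal{A}(x,n))\big]=0$. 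Combined with your pointwise result (which gives this $\liminf$ equal to $\tilde\chi_k-\tilde\chi_{k+1}>0$ everywhere), this yields an immediate contradiction. The proof of Theorem~\ref{t.zero_liminf} is the substantive missing idea: from Proposition~\ref{p.BG} one extracts points $x_i$ and times $n_i\to\infty$ with small normalized gap; inequality~\eqref{e.BPS} propagates this smallness along an orbit segment of length roughly $\sqrt{n_i}$; minimality makes such segments eventually dense, and Baire produces the residual set. This orbit-propagation argument is what converts the failure of \emph{uniform} separation into a \emph{topologically generic} failure, and it is not visible from your outline.
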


The proof of this theorem is given in Section~\ref{ss.dichotomy_proof}.

Theorem~\ref{t.dichotomy_minimal} extends some previous results such as \cite[Theorem~4]{Furman} and \cite[Corollary~6.6]{HLT}. 

Johnson and Zampogni asked a question in \cite[p.~104]{JohnZamp}, which in our terminology is as follows: Assume that the dynamics $f$ is minimal, and that for all points $x$ and all nonzero vectors $v$, the forward Lyapunov exponent $\lim_{n \to \infty} \frac1n \log\|\mathcal{A}(x,n)v\|$ is well-defined; does it follow that the cocycle is completely regular? Note that Theorem~\ref{t.dichotomy_minimal} provides a positive answer to this question under the stronger hypothesis that all points are LP-regular.

\subsection{Singular values of cocycles}

As a preparation for the proof of Theorem~\ref{t.dichotomy_minimal}, we discuss the behaviour of the singular values of the cocycle. Furthermore, under assumption of minimality, we obtain new information about singular values in the absence of dominated splitting (Theorem~\ref{t.zero_liminf}).

The \emph{singular values} of a linear operator $L \colon \R^d \to \R^d$ are the eigenvalues of $\sqrt{L^* L}$, which we list with multiplicity as 
\[
\sigma_1(L) \ge \cdots \ge \sigma_d(L) \, .
\] 
Each singular value $\sigma_k(L)$ depends continuously on $L$.
The extreme singular values are the norm $\|L\| = \sigma_1(L)$ and the co-norm $m(L) = \sigma_d(L)$. 
More generally, by the \emph{Courant--Fischer principle} \cite[Theorem~7.3.8]{HornJ}, for any $k = 1, \dots, d$,
\begin{alignat*}{2}
\sigma_k(L)
&= \max \big\{ m(A|_E) &\colon &\text{$E$ is a subspace of dim.\ $k$} \big\}  \\
&= \min \big\{ \|A|_F\| &\colon &\text{$F$ is a subspace of dim.\ $d-k+1$} \big\} \, . 
\end{alignat*}
As a simple consequence, we have the following useful bounds:
\begin{equation}\label{e.BPS}
m(L_3) \, \sigma_k(L_2) \, m(L_1) \le 
\sigma_k(L_3 L_2 L_1) \le 
\|L_3\| \, \sigma_k(L_2) \, \|L_1\| \, .
\end{equation}

The list of singular values of the exterior power $L^{\wedge r}$ coincides with the list $\sigma_{k_1}(L) + \cdots + \sigma_{k_r}(L)$, where $1 \le k_1 < \cdots < k_r \le d$, with repetitions being taken into account. 

The Lyapunov exponents of a linear cocycle $\mathcal{A}$ are related to the singular values as follows: for every forward regular point $x$, 
\[
\tilde\chi_k(x) = \lim_{n \to +\infty} \frac{1}{n} \log \sigma_k(\mathcal{A}(x,n)) \, .
\]

Dominated splittings (see Section~\ref{ss.dominations})
can be detected in terms of separation of singular values as follows:

\begin{proposition}[\!\!{\cite[Theorem~A]{BochiGourmelon}}]\label{p.BG}
Let $f$ be a homeomorphism of a compact metric space, and let $\mathcal{A}$ be a $d$-dimensional continuous linear cocycle over $f$.
For every $k$ with $0<k<d$, the cocycle $\mathcal{A}$ admits a dominated splitting with a nontrivial dominating bundle of dimension $k$ if and only if there exist positive constants $c$ and $\varepsilon$ such that
\[
\frac{\sigma_{k}(\mathcal{A}(x,n))}{\sigma_{k+1}(\mathcal{A}(x,n)) } 
> c e^{\varepsilon n}
\]
for all $x \in X$ and all $n>0$.
\end{proposition}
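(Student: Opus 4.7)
The forward direction is immediate. If $\mathbb{R}^d = E^\mathrm{cu}(x) \oplus E^\mathrm{cs}(x)$ is a dominated splitting with $\dim E^\mathrm{cu} = k$ and $E^\mathrm{cu}$ dominating $E^\mathrm{cs}$, then iterating \eqref{e.def_domination} together with the uniform lower bound on the angle between the subbundles (guaranteed by Proposition~\ref{p.DS_is_cont}) yields constants $C,\lambda > 0$ such that
\[
m\bigl(\mathcal{A}(x,n)|E^\mathrm{cu}(x)\bigr) \geq C e^{\lambda n} \bigl\|\mathcal{A}(x,n)|E^\mathrm{cs}(x)\bigr\|
\]
for all $x \in X$ and $n > 0$. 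The Courant--Fischer principle gives $\sigma_k(\mathcal{A}(x,n)) \geq m(\mathcal{A}(x,n)|E^\mathrm{cu}(x))$ (since $\dim E^\mathrm{cu} = k$) and $\sigma_{k+1}(\mathcal{A}(x,n)) \leq \|\mathcal{A}(x,n)|E^\mathrm{cs}(x)\|$ (since $\dim E^\mathrm{cs} = d-k$), whence the claimed exponential ratio bound.

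For the converse, my plan is to reduce to the case $k=1$ via exterior powers and then construct the dominating subbundle as a limit of top singular subspaces pulled in from the distant past. Since
\[
\frac{\sigma_1(\mathcal{A}^{\wedge k}(x,n))}{\sigma_2(\mathcal{A}^{\wedge k}(x,n))} = \frac{\sigma_k(\mathcal{A}(x,n))}{\sigma_{k+1}(\mathcal{A}(x,n))},
\]
the hypothesis transfers to the top two singular values of $\mathcal{A}^{\wedge k}$. A 1-dimensional dominating subbundle for $\mathcal{A}^{\wedge k}$ whose directions are all decomposable $k$-vectors (i.e.\ genuine Plücker images of $k$-planes in $\mathbb{R}^d$) will lift canonically to a $k$-dimensional dominating subbundle for $\mathcal{A}$. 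Decomposability comes for free because the dominating line will be constructed as a uniform limit of decomposable $k$-vectors, and the locus of decomposable directions in $\mathbb{P}(\wedge^k \mathbb{R}^d)$ is closed.

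So assume $k=1$ and set $\ell_n(x) \in \mathbb{P}^{d-1}$ to be the top left-singular direction of $\mathcal{A}(f^{-n}(x), n)$. The key input is the classical fact that when $\sigma_1(L)/\sigma_2(L) \gg 1$, the projective action of $L$ is a strong contraction with factor comparable to $\sigma_2(L)/\sigma_1(L)$ onto a neighborhood of its top left-singular direction (away from the hyperplane orthogonal to its top right-singular direction). Combining this contraction with the submultiplicative inequality \eqref{e.BPS}, I would compare $\ell_n(x)$ with $\ell_{n+m}(x)$ and derive a uniform bound of the form $d_\mathbb{P}(\ell_n(x), \ell_{n+m}(x)) \lesssim c^{-1} e^{-\varepsilon n}$. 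Hence $(\ell_n(x))_n$ is uniformly Cauchy; its limit $\ell_\infty(x)$ is continuous in $x$, and invariance $\mathcal{A}(x,1)\ell_\infty(x) = \ell_\infty(f(x))$ follows by passing to the limit in the identity $\mathcal{A}(x,1)\ell_n(x) = \ell_{n-1}(f(x))$.

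The complementary $(d-k)$-dimensional invariant subbundle $E^\mathrm{cs}$ is then produced symmetrically by applying the entire argument to the inverse cocycle $\mathcal{A}^{-1}$, using that the gap for $\mathcal{A}$ at position $(k, k+1)$ translates into a gap for $\mathcal{A}^{-1}$ at position $(d-k, d-k+1)$; the resulting invariant continuous bundle is automatically transverse to $E^\mathrm{cu}$ by the separation of growth rates along the two. A direct check then verifies \eqref{e.def_domination} for the splitting $E^\mathrm{cu} \oplus E^\mathrm{cs}$ at a sufficiently large iterate. The main technical obstacle will be the quantitative projective contraction estimate carried out uniformly in $x$: the intermediate iterates $\mathcal{A}(x, m)$ for $0 < m < n$ need not themselves exhibit any singular-value gap, so one must combine the gap hypothesis on the final iterate $\mathcal{A}(\cdot, n)$ with \eqref{e.BPS} in a way that keeps angles bounded away from the ``bad'' hyperplanes uniformly along the iteration. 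Once this estimate is in place, the remainder of the argument is standard.
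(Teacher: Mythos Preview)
The paper does not prove this proposition: it is quoted as \cite[Theorem~A]{BochiGourmelon} and used as a black box, with only the remark that the ``only if'' direction is trivial and the ``if'' direction is the interesting one. So there is no in-paper argument to compare against.

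Your outline is sound and is in fact close to the strategy of the original Bochi--Gourmelon paper: reduce to $k=1$ via exterior powers (using $\sigma_1(L^{\wedge k})/\sigma_2(L^{\wedge k}) = \sigma_k(L)/\sigma_{k+1}(L)$ and the closedness of the Pl\"ucker image of the Grassmannian in $\mathbb{P}(\wedge^k\mathbb{R}^d)$), build the dominating line as a uniform limit of top left-singular directions of long backward products, and obtain the complementary bundle by the dual argument for the inverse cocycle.

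One small slip: the relation ``$\mathcal{A}(x,1)\ell_n(x) = \ell_{n-1}(f(x))$'' is not an identity. The top left-singular direction of a product $AL$ is not in general $A$ applied to the top left-singular direction of $L$. What is true is that when $\sigma_1(L)/\sigma_2(L)$ is large and $A$ is uniformly bounded, the two directions are close, with error controlled by $\sigma_2(L)/\sigma_1(L)$ times the distortion of $A$. This approximate version is precisely what one feeds into the limit to get invariance of $\ell_\infty$, so the defect is cosmetic, but the wording should be fixed.

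You correctly flag that the substantive work is the uniform Cauchy estimate for $(\ell_n(x))_n$, which amounts to a quantitative projective-contraction lemma combined with \eqref{e.BPS}. You identify this but do not carry it out; in a complete proof this is where the effort goes, and it is exactly what \cite{BochiGourmelon} supplies.
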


In the result above, the ``only if'' part is rather trivial. 
It is the ``if'' part that is interesting: the exponential separation of singular values forces the existence of a dominated splitting. 

Our next result can be seen as an improvement of Proposition~\ref{p.BG} under the assumption of minimality:

\begin{theorem}\label{t.zero_liminf}
Let $f$ be a minimal homeomorphism of a compact metric space, and let $\mathcal{A}$ be a $d$-dimensional continuous linear cocycle over $f$.
Fix $k$ with $0<k<d$.
Then the cocycle $\mathcal{A}$ does \emph{not} admits a dominated splitting with a dominating bundle of dimension $k$ if and only if
there exists a residual subset $G$ of $X$ such that for every $x \in G$, 
\[
\liminf_{n \to \infty}\frac{1}{n} \log \frac{\sigma_k(\mathcal{A}(x, n))}{\sigma_{k+1}(\mathcal{A}(x, n))}  = 0 \, .
\]
\end{theorem}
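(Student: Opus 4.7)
The plan is to prove both directions, with the forward direction following quickly from Proposition~\ref{p.BG} and the reverse direction being the substantive content. For the easy direction ($\Leftarrow$), observe that $\sigma_k \ge \sigma_{k+1}$, so the liminf in question is always nonnegative. A residual subset of a compact metric space is nonempty, and at any point $x$ with liminf equal to zero, the ratio $\sigma_k(\mathcal{A}(x,n))/\sigma_{k+1}(\mathcal{A}(x,n))$ fails the exponential lower bound supplied by Proposition~\ref{p.BG}, so no dominated splitting of index $k$ can exist.

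For the hard direction ($\Rightarrow$), my approach is to define, for each $\varepsilon>0$ and $N\in\N$, the open set
\[
U_{\varepsilon,N} = \left\{ x \in X : \exists\, n \ge N \text{ with } \frac{\sigma_k(\mathcal{A}(x,n))}{\sigma_{k+1}(\mathcal{A}(x,n))} < e^{\varepsilon n} \right\},
\]
which is open by the continuity of singular values and of $\mathcal{A}(\cdot,n)$. The set $G \coloneqq \bigcap_{m,N \ge 1} U_{1/m, N}$ is then a $G_\delta$, and on $G$ the liminf equals zero. So the task reduces to showing that each $U_{\varepsilon,N}$ is dense.

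The key step is the density argument, which combines three ingredients: (i)~the contrapositive of Proposition~\ref{p.BG}, which guarantees points $y_0 \in X$ and arbitrarily large times $m_0$ with the singular-value ratio at $(y_0,m_0)$ smaller than $e^{\delta m_0}$ for any prescribed $\delta>0$; (ii)~the uniform recurrence implied by minimality: for any open $V$, the open cover $\{f^{-j}(V)\}_{j\ge 0}$ of $X$ has a finite subcover, so there is $N_V$ such that every point in $X$ returns to $V$ under some $f^{-j}$ with $0 \le j \le N_V$; (iii)~the singular-value estimates \eqref{e.BPS}. Given $V$, $\varepsilon$, $N$, I pick $\delta \coloneqq \varepsilon/2$, pick $m_0 \ge \max(N, 4N_V \log C/\varepsilon)$ with $C \coloneqq \max_{x}\max(\|A(x)\|,\|A(x)^{-1}\|)$, and let $y_0, m_0$ be supplied by (i). Apply (ii) to obtain $j \le N_V$ with $x \coloneqq f^{-j}(y_0) \in V$. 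Writing $\mathcal{A}(x, m_0 + j) = \mathcal{A}(y_0, m_0)\,\mathcal{A}(x,j)$ and using \eqref{e.BPS} gives
\[
\frac{\sigma_k(\mathcal{A}(x,m_0+j))}{\sigma_{k+1}(\mathcal{A}(x,m_0+j))}
\le \frac{\|\mathcal{A}(x,j)\|}{m(\mathcal{A}(x,j))} \cdot \frac{\sigma_k(\mathcal{A}(y_0,m_0))}{\sigma_{k+1}(\mathcal{A}(y_0,m_0))}
\le C^{2N_V} e^{\delta m_0},
\]
whence $\frac{1}{n}\log(\sigma_k/\sigma_{k+1}) \le \frac{2N_V \log C}{m_0} + \delta < \varepsilon$ with $n = m_0 + j \ge N$. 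This shows $x \in U_{\varepsilon,N} \cap V$, establishing density.

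The main obstacle is calibrating the three pieces so that the ``cost'' of the detour of length $j$ through $V$ (controlled by $C^{2N_V}$) is absorbed by the exponent $m_0$ obtained from Proposition~\ref{p.BG}. This is why we first fix $V$ (which determines $N_V$) and only then pick $m_0$ sufficiently large and $\delta$ sufficiently small; the order of quantifiers matters. Once density of each $U_{\varepsilon,N}$ is established, Baire's theorem yields that $G$ is a dense $G_\delta$, hence residual, completing the proof.
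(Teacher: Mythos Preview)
Your proof is correct and follows essentially the same approach as the paper: both directions rest on Proposition~\ref{p.BG}, and for the hard direction both arguments propagate the smallness of the singular-value ratio along the orbit via the estimates~\eqref{e.BPS}, then invoke minimality and Baire. The only organizational difference is that the paper spreads the estimate over an orbit segment of length $\sim\sqrt{n_i}$ around the point supplied by Proposition~\ref{p.BG} and uses that long orbit segments become dense, whereas you fix the target open set $V$ first and use uniform return times to pull the point back into $V$; your bookkeeping is arguably slightly cleaner (note, however, a harmless sign slip: you need the cover $\{f^{j}(V)\}_{j\ge 0}$, not $\{f^{-j}(V)\}_{j\ge 0}$, to conclude $f^{-j}(y_0)\in V$, which is fine since minimality of $f$ and $f^{-1}$ are equivalent).
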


Note that the minimality assumption cannot be dropped, as Example~\ref{ex.tangency} shows.

\begin{proof}[Proof of Theorem~\ref{t.zero_liminf}]
The ``if'' part is a direct consequence of (the trivial half of) Proposition~\ref{p.BG}.
So let us prove the ``only if'' part.
Assume that the cocycle $\mathcal{A}$ admits no dominated splitting with dominating bundle of dimension $k$.
By Proposition~\ref{p.BG}, for every $i>0$ there exists $x_i \in X$ and $n_i > i$ such that
\[
\frac{1}{n_i} \log \frac{\sigma_{k}(\mathcal{A}(x_i,n_i))}{\sigma_{k+1}(\mathcal{A}(x_i,n_i))}  < \frac{1}{i} \, .
\]
Since $X$ is compact and the cocycle is continuous, there exists $c>0$ such that
\[
\|\mathcal{A}(x,n)\| \le e^{c|n|}
\]
for all $x\in X$ and $n \in \Z$.
Consider the matrix identity
\[
\mathcal{A}(f^j(x), n) = 
\mathcal{A}(f^{n}(x), j)  \, \mathcal{A}(x, n)  \, \mathcal{A}(f^j(x), -j) \, .
\]
Using inequalities \eqref{e.BPS}, 
\[
\frac{\sigma_k(\mathcal{A}(f^j(x), n))}{\sigma_{k+1}(\mathcal{A}(f^j(x), n))} \le 
e^{4c|j|} \,
\frac{\sigma_k(\mathcal{A}(x,n))}{\sigma_{k+1}(\mathcal{A}(x, n))} \, .
\]
In particular, if $|j|< \sqrt{n_i}$, then
\[
\frac{1}{n_i} \log \frac{\sigma_k(\mathcal{A}(f^j(x_i), n_i))}{\sigma_{k+1}(\mathcal{A}(f^j(x_i), n_i))} < \frac{4c}{\sqrt{n_i}} + \frac{1}{i} \eqcolon \varepsilon_i \, .
\]
Note that $\varepsilon_i \to 0$ as $i \to \infty$.

Now define a sequence of open sets
\[
U_i \coloneqq \left\{ x \in X \colon 
\frac{1}{n_i} \log \frac{\sigma_k(\mathcal{A}(x, n_i))}{\sigma_{k+1}(\mathcal{A}(x, n_i))} <  \varepsilon_i 
\right\} \, .
\]
By the previous observations, $U_i$ contains the orbit segment $\{f^j(x_i) : |j| < \sqrt{n_i}\}$. Since $n_i \to \infty$ and $f$ is a minimal transformation, there exists a sequence $\delta_i \to 0$ such that each set $U_i$ is $\delta_i$-dense in $X$.
It follows that, for each $m>0$, the set $V_m \coloneqq \bigcup_{i=m}^\infty U_i$ is open and dense. By Baire's theorem, the set $G \coloneqq \bigcap_{m=1}^\infty V_m$ is residual, and in particular dense.

For every point $x$ in $G$, there exists a sequence $i_j \to \infty$ such that $x \in U_{i_j}$, and so
\[
\lim_{j \to \infty}\frac{1}{n_{i_j}} \log \frac{\sigma_k(\mathcal{A}(x, n_{i_j}))}{\sigma_{k+1}(\mathcal{A}(x, n_{i_j}))}  = 0 \, . \qedhere
\]
\end{proof}

Let us highlight a consequence of Theorem~\ref{t.zero_liminf} in the particular context discussed in Example~\ref{ex3}, Section~\ref{ss.examples}:

\begin{corollary}
Let $f$ be a uniquely ergodic homeomorphism of a compact metric space and let $\mathcal{A}$ be a continuous $\mathrm{SL}^{\pm}(2,\R)$-cocycle over $f$. If the cocycle $\mathcal{A}$ is not uniformly hyperbolic, then there exists a residual subset $G_0$ of $X$ such that for all $x\in G_0$, the set of accumulation points of the sequence $\frac{1}{n}\log\|\mathcal{A}(x,n)\|$ is the interval $[0,\chi_1]$, where $\chi_1 = \chi_1(\mu)$ is the first Lyapunov exponent of the cocycle with respect to the unique $f$-invariant measure $\mu$.
\end{corollary}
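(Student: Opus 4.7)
The case $\chi_1=0$ is trivial, since by Proposition~\ref{p.trichotomy} the cocycle is then completely regular and $\tfrac{1}{n}\log\|\mathcal{A}(x,n)\|$ converges uniformly to $0$, making the accumulation set everywhere equal to $\{0\}=[0,\chi_1]$. Assume therefore $\chi_1>0$, i.e.\ that we are in case~\eqref{i:NUH} of Example~\ref{ex3}. As noted in the proof of Proposition~\ref{p.trichotomy}, within $\mathrm{SL}^{\pm}(2,\R)$ any dominated splitting is automatically uniformly hyperbolic, so the non-UH hypothesis rules out \emph{any} dominated splitting. Working on $\mathrm{supp}(\mu)$, which is compact, $f$-invariant, and minimal under unique ergodicity (and which in the standard reading of the statement coincides with $X$), apply Theorem~\ref{t.zero_liminf} with $k=1$ to obtain a residual $G_1 \subset X$ such that $\liminf_{n}\tfrac{1}{n}\log\tfrac{\sigma_1(\mathcal{A}(x,n))}{\sigma_2(\mathcal{A}(x,n))} = 0$ for $x\in G_1$. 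Setting $\varphi_n \coloneqq \log\|\mathcal{A}(\cdot,n)\|$, the relation $\sigma_1\sigma_2 = |\det \mathcal{A}(x,n)| = 1$ gives $\varphi_n = \tfrac{1}{2}\log(\sigma_1/\sigma_2)$, so $\liminf_n \varphi_n(x)/n = 0$ on $G_1$. (The reverse inequality $\liminf \ge 0$ is automatic from $\|L\|\ge 1$ for $L\in\mathrm{SL}^{\pm}(2,\R)$.)

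For the upper bound on $\limsup$, apply Proposition~\ref{p.SUSAET} to the continuous subadditive sequence $\varphi_n$. Unique ergodicity leaves only $\mu$ as candidate measure, and by the subadditive ergodic theorem $\tfrac{1}{n}\int \varphi_n\, d\mu \to \chi_1$, whence $\sup_X \varphi_n/n \to \chi_1$; in particular $\limsup_n \varphi_n(x)/n \le \chi_1$ at every $x$. For the matching lower bound on a residual set, I mimic the orbit-spreading argument in the proof of Theorem~\ref{t.zero_liminf}: choose $x_i\in X$ and $n_i\nearrow\infty$ with $\varphi_{n_i}(x_i)/n_i > \chi_1 - 1/i$. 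From the identity $\mathcal{A}(f^j(x),n) = \mathcal{A}(x,j+n)\mathcal{A}(x,j)^{-1}$, the uniform bound $\|\mathcal{A}(x,j)^{\pm 1}\| \le e^{c|j|}$ (where $c \coloneqq \max_{x} \log\|A(x)\|$, using $\|L^{-1}\|=\|L\|$ for $L \in \mathrm{SL}^{\pm}(2,\R)$), and submultiplicativity of norms, one verifies
\[
\frac{\varphi_n(f^j(x))}{n} \;\ge\; \frac{\varphi_n(x)}{n} - \frac{2c|j|}{n}.
\]
Thus for $|j|<\sqrt{n_i}$ the orbit segment $\{f^j(x_i)\}$ lies in the open set $U_i \coloneqq \{y : \varphi_{n_i}(y)/n_i > \chi_1 - 1/i - 2c/\sqrt{n_i}\}$. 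By minimality these orbit segments are $\delta_i$-dense with $\delta_i\to 0$, so the $U_i$ are open and dense for $i$ large. The Baire-type intersection $G_2 \coloneqq \bigcap_{m}\bigcup_{i\ge m} U_i$ is residual, and every $x\in G_2$ lies in infinitely many $U_i$, giving $\limsup_n \varphi_n(x)/n \ge \chi_1$.

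Set $G_0 \coloneqq G_1\cap G_2$, still residual. On $G_0$ one now has $\liminf_n \varphi_n(x)/n = 0$ and $\limsup_n \varphi_n(x)/n = \chi_1$, so the accumulation set is contained in $[0,\chi_1]$ and contains both endpoints. To extract the full interval, use the single-step bound $|\varphi_{n+1}(x)-\varphi_n(x)| \le c$ (once more from $\|L^{-1}\|=\|L\|$), which yields $\bigl|\tfrac{\varphi_{n+1}(x)}{n+1}-\tfrac{\varphi_n(x)}{n}\bigr| = O(1/n)$; any real sequence whose consecutive gaps tend to $0$ has as its accumulation set a closed interval, and here it is forced to be $[\liminf,\limsup] = [0,\chi_1]$. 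The main technical obstacle is the middle paragraph, namely upgrading the single nearly-optimal point $x_i$ furnished by unique ergodicity into an open-dense set; this is accomplished by the orbit-spreading estimate combined with minimality, and once this is in place the rest of the argument is bookkeeping.
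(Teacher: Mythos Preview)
Your argument is correct. The overall skeleton matches the paper's: handle $\chi_1=0$ separately, use Theorem~\ref{t.zero_liminf} for the liminf, Proposition~\ref{p.SUSAET} for the upper bound $\limsup\le\chi_1$, and the ``slow variation'' of $\varphi_n/n$ to conclude that the accumulation set is an interval (the paper cites \cite[Lemma~2.5]{Walters} for this, you supply the easy argument directly).

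The one genuine methodological difference is how the inequality $\limsup_n \varphi_n(x)/n \ge \chi_1$ is obtained on a residual set. The paper simply notes that the set of Oseledets-regular points is dense (since $\mu$ has full support) and on that set $\lim_n \varphi_n/n = \chi_1$; feeding this dense set and the dense set from Theorem~\ref{t.zero_liminf} into the proof of Theorem~\ref{thm-seq-phin} produces, for each $0<\alpha''<\beta''<\chi_1$, a residual set where $\liminf<\alpha''$ and $\limsup>\beta''$, and then a countable intersection over rationals finishes. You instead rerun the orbit-spreading mechanism of Theorem~\ref{t.zero_liminf} for the sequence $\varphi_n$ itself, exploiting the $\mathrm{SL}^{\pm}(2,\R)$ identity $\|L^{-1}\|=\|L\|$ to get the two-sided bound $\varphi_n(f^j x)\ge\varphi_n(x)-2c|j|$. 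This is a bit more labor but has the virtue of not invoking the Oseledets theorem at all, and it yields the residual set $G_2$ in one stroke rather than via a countable-intersection argument over $\alpha'',\beta''$. The paper's route is shorter because it reuses Theorem~\ref{thm-seq-phin}; yours is more self-contained.
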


Note that the result above improves some of the conclusions of \cite[Theorem~4]{Furman}.

\begin{proof}
For each $x$, let $\Lambda_x$ be the set of accumulation points of the sequence $\frac{1}{n}\log\|\mathcal{A}(x,n)\|$. Then $\Lambda_x$ is a compact interval (see e.g.\ \cite[Lemma~2.5]{Walters}).
Note that $\Lambda_x \subset [0,\infty)$ because $|\det \mathcal{A}| \equiv 1$, and $\Lambda_x \subset [0,\chi_1]$ due to Proposition~\ref{p.SUSAET}.
If $\chi_1 = 0$, there is nothing to show.
So, assume that $\chi_1$ is positive.
Since the cocycle is not uniformly hyperbolic, by Theorem~\ref{t.zero_liminf} there is a dense set of points $x$ for which $\liminf \frac{1}{n}\log\|\mathcal{A}(x,n)\| = 0$. On the other hand, there is another dense set of points $x$ for which $\lim \frac{1}{n}\log\|\mathcal{A}(x,n)\| = \chi_1$. The proof of Theorem~\ref{thm-seq-phin} actually shows that if $0<\alpha''<\beta''<\chi_1$, then there is a residual subset of points $x$ such that 
\[
\liminf \frac{1}{n}\log\|\mathcal{A}(x,n)\| < \alpha'' < \beta'' < \limsup \frac{1}{n}\log\|\mathcal{A}(x,n)\| 
\] 
and in particular $\Lambda_x \supset [\alpha'', \beta'']$. 
Therefore, the set of points $x$ such that $\Lambda_x = [0,\chi_1]$ is residual. 
\end{proof}

\subsection{Proof of Theorem~\ref{t.dichotomy_minimal}}\label{ss.dichotomy_proof}

Recall that, given a ergodic measure $\mu$, its Lyapunov exponents, repeated according to their multiplicities, are denoted $\tilde{\chi}_1(\mathcal{A},\mu) \ge \cdots \ge \tilde\chi_d(\mathcal{A},\mu)$.

\begin{lemma}\label{l:minimal_no_dom}
Let $f$ be a minimal homeomorphism of a compact metric space~$X$.
Let $\mathcal{A}$ be a $d$-dimensional continuous linear cocycle over $f$ and let $\mathcal{R}$ be the set of its LP-regular points.
Suppose that $\mathcal{A}$ admits no dominated splitting into non-trivial subbundles. 
Then:
\begin{enumerate}
\item\label{i.equal_LE}
either $\tilde\chi_1(\mathcal{A},\mu) = \tilde\chi_d(\mathcal{A},\mu)$ for every $f$-ergodic Borel probability measure $\mu$, in which case the set $\mathcal{R}$ equals $X$ and the cocycle $\mathcal{A}$ is completely regular;
\item\label{i.different_LE} 
or $\tilde\chi_1(\mathcal{A},\mu) > \tilde\chi_d(\mathcal{A},\mu)$ for some $f$-ergodic Borel probability measure $\mu$, in which case the set $\mathcal{R}$ is meager in $X$.
\end{enumerate}
\end{lemma}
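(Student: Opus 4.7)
The dichotomy is tautologically exhaustive, so the task reduces to verifying the stated conclusion in each case.

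\emph{Case \eqref{i.different_LE}.} Fix an ergodic $\mu$ with $\tilde{\chi}_1(\mathcal{A},\mu) > \tilde{\chi}_d(\mathcal{A},\mu)$ and pick $k \in \{1,\dots,d-1\}$ such that $\delta \coloneqq \tilde{\chi}_k(\mathcal{A},\mu) - \tilde{\chi}_{k+1}(\mathcal{A},\mu) > 0$. Consider the continuous functions
\[
\varphi_n(x) \coloneqq \frac{1}{n}\log\frac{\sigma_k(\mathcal{A}(x,n))}{\sigma_{k+1}(\mathcal{A}(x,n))}.
\]
On the Oseledets-generic set of $\mu$ (dense in $X$, since minimality forces $\mu$ to be fully supported) we have $\varphi_n \to \delta$. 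Meanwhile, the no-dominated-splitting hypothesis at dimension $k$, combined with Theorem~\ref{t.zero_liminf}, yields a residual set on which $\liminf_n \varphi_n = 0$. Applying Theorem~\ref{thm-seq-phin} with any $0 < \alpha < \beta < \delta$ makes the set where $\lim_n \varphi_n$ exists meager; and since every LP-regular $x$ belongs to this set (with $\lim_n \varphi_n(x) = \tilde{\chi}_k(\mathcal{A},x) - \tilde{\chi}_{k+1}(\mathcal{A},x)$), we conclude that $\mathcal{R}$ is meager.

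\emph{Case \eqref{i.equal_LE}.} Write $\chi(\mu) \coloneqq \tilde{\chi}_1(\mathcal{A},\mu) = \tilde{\chi}_d(\mathcal{A},\mu)$, $\beta \coloneqq \sup_\mu \chi(\mu)$, $\alpha \coloneqq \inf_\mu \chi(\mu)$. Proposition~\ref{p.SUSAET}, applied to the subadditive sequence $\log\|\mathcal{A}(\cdot,n)\|$ and its superadditive counterpart $\log m(\mathcal{A}(\cdot,n))$, yields
\[
\lim_n \sup_x \tfrac{1}{n}\log\|\mathcal{A}(x,n)\| = \beta, \qquad \lim_n \inf_x \tfrac{1}{n}\log m(\mathcal{A}(x,n)) = \alpha.
\]
The heart of the case is the equality $\alpha = \beta$. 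Once this is known, Proposition~\ref{p.one_exponent} upgrades the preceding to uniform convergence to $\chi \coloneqq \alpha = \beta$, and the squeeze $m(\mathcal{A}(x,n))\|v\| \le \|\mathcal{A}(x,n)v\| \le \|\mathcal{A}(x,n)\|\,\|v\|$ (together with its counterpart for $n \to -\infty$ via the inverse cocycle) makes every $x \in X$ LP-regular with single Lyapunov exponent $\chi$ of multiplicity $d$; hence $\mathcal{R} = X$ and $\mathcal{A}$ is completely regular.

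The main obstacle is therefore the proof of $\alpha = \beta$. My plan of attack is by contradiction: assume $\alpha < \beta$, pick $c \in (\alpha,\beta)$, and study the twisted cocycle $\mathcal{A}_c \coloneqq e^{-c}\mathcal{A}$. On ergodic measures $\mu$ with $\chi(\mu) > c$ all Lyapunov exponents of $\mathcal{A}_c$ are positive; on those with $\chi(\mu) < c$ they are all negative. The task is to parlay this measurable sign change of exponents, together with minimality, into a genuine dominated splitting for $\mathcal{A}_c$ (equivalently for $\mathcal{A}$), contradicting the standing no-dominated-splitting hypothesis --- plausibly via Proposition~\ref{p.Mane}, the Sacker--Sell description of spectral subbundles, or the Bochi--Gourmelon singular-value criterion (Proposition~\ref{p.BG}). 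Extracting a uniform singular-value gap from the pointwise sign change of Lyapunov exponents is the technical crux.
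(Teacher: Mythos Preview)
Your treatment of Case~\eqref{i.different_LE} is correct and matches the paper's argument.

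In Case~\eqref{i.equal_LE} there is a genuine gap: you reduce to the claim $\alpha=\beta$ and offer only a plan, but this claim is \emph{false} in general. Take $A(x)=a(x)\,\mathrm{Id}$ for a scalar function $a$ (in particular $d=1$): then $\tilde\chi_1(\mu)=\tilde\chi_d(\mu)$ for every ergodic $\mu$ and there is no nontrivial dominated splitting, yet if $f$ is minimal but not uniquely ergodic one may choose $a$ with $\int\log|a|\,d\mu_1 \ne \int\log|a|\,d\mu_2$, i.e.\ $\alpha<\beta$. Your contradiction scheme via Proposition~\ref{p.Mane}, Sacker--Sell, or Proposition~\ref{p.BG} can at best manufacture a \emph{nontrivial} splitting, which is unavailable here, so the plan cannot close.

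The paper sidesteps this by normalizing: set $\hat{\mathcal{A}}\coloneqq|\det\mathcal{A}|^{-1/d}\mathcal{A}$. The hypothesis of Case~\eqref{i.equal_LE} forces all Lyapunov exponents of $\hat{\mathcal{A}}$ to vanish for every ergodic measure, so Proposition~\ref{p.one_exponent} makes $\hat{\mathcal{A}}$ completely regular with zero spectrum. LP-regularity of $\mathcal{A}$ then reduces to the one-dimensional question of whether the forward and backward Birkhoff averages of $\log|\det A|$ converge everywhere to a common value; Theorem~\ref{t.dichotomy_Birkhoff}, applied to $f$ and to $f^{-1}$, yields that $\mathcal{R}$ is either meager or all of $X$, and in the latter case $\mathcal{A}$ is completely regular. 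Thus what the paper actually establishes in Case~\eqref{i.equal_LE} is again an all-or-meager dichotomy---not the unconditional $\mathcal{R}=X$ the lemma literally asserts (which the scalar example rules out)---and this weaker conclusion is exactly what the proof of Theorem~\ref{t.dichotomy_minimal} uses.
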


\begin{proof}
As a first case, suppose that $\tilde\chi_1(\mathcal{A},\mu) = \tilde\chi_d(\mathcal{A},\mu)$ for every $f$-ergodic Borel probability measure $\mu$.
Consider the following ``normalized'' cocycle
\begin{equation}\label{e.normalized}
\hat{\mathcal{A}}(x,n) \coloneqq \left| \det \mathcal{A}(x,n) \right|^{-\frac{1}{d}} \mathcal{A}(x,n) \, ,
\end{equation}
whose determinants are $\pm 1$.
Our assumption ensures that all Lyapunov exponents of $\hat{\mathcal{A}}$ with respect to all ergodic measures are zero. By Proposition~\ref{p.one_exponent}, the cocycle $\hat{\mathcal{A}}$ is completely regular with zero spectrum. 

Let $\varphi_n(x) \coloneqq \log |\det \mathcal{A}(x,n)|$, and let $\mathcal{D}$ be the set of $x \in X$ such that the following two limits exist and coincide:
\begin{equation}\label{e.two_limits}
\lim_{n\to \infty} \frac{\varphi_n(x)}{n} = \lim_{n\to -\infty} \frac{\varphi_n(x)}{n} \, .
\end{equation}
We claim that $\mathcal{R}$ (the set of LP-regular points for $\mathcal{A}$) equals $\mathcal{D}$. Indeed, the inclusion $\mathcal{R} \subset \mathcal{D}$ always holds, as for LP-regular points the limit \eqref{e.two_limits} will be the sum of the Lyaponov exponents. On the other hand, since $\hat{\mathcal{A}}$ is completely regular with zero exponents, it follows from \eqref{e.normalized} that
\begin{equation}\label{e.backyard}
\log \|\mathcal{A}(x,n) v\| - \frac{\varphi_n(x)}{d} = o(|n|) 
\end{equation}
for all $x \in X$ and all nonzero $v \in \R^d$, showing the reverse inclusion $\mathcal{D} \subset \mathcal{R}$.

Applying Theorem~\ref{t.dichotomy_Birkhoff} to $f$ and $f^{-1}$, we see that the set $\mathcal{D}$ (that is, $\mathcal{R}$) is either meager or equal to $X$ and, in the latter case, the limits \eqref{e.two_limits} equal some constant $\chi$. So, when $\mathcal{R}=X$, \eqref{e.backyard} says that all Lyapunov exponents of the cocycle $\mathcal{A}$ are equal to $\chi$ and, in particular, the cocycle is completely regular.

\smallskip

Now consider the second case, where there exists an ergodic Borel probability measure $\mu$ such that
$\tilde\chi_1(\mathcal{A},\mu) > \tilde\chi_d(\mathcal{A},\mu)$.
Then, for $\mu$-almost every point $y$,
\[
\lim_{n \to \infty}\frac{1}{n} \log \frac{\sigma_k(\mathcal{A}(y, n))}{\sigma_{k+1}(\mathcal{A}(y, n))}  = \tilde\chi_k(\mu) - \tilde\chi_{k+1}(\mu) > 0 \, .
\]
By minimality of $f$, the measure $\mu$ has full support, so the formula above holds for a dense set of points $y$.
On the other hand, by Theorem~\ref{t.zero_liminf}, there is another dense set where the formula fails, and actually the liminf of the sequence is $0$.
Now we can apply Theorem~\ref{thm-seq-phin} 
and conclude that the set of points $z$ for which 
\[
\lim_{n \to \infty}\frac{1}{n} \log \frac{\sigma_k(\mathcal{A}(z, n))}{\sigma_{k+1}(\mathcal{A}(z, n))}
\]
exists is meager in $X$.
In particular, the set $\mathcal{R}$ of LP-regular points is meager, as we wanted to show.
\end{proof}

At last, we can establish our general all-or-meager dichotomy for cocycles over minimal transformations:

\begin{proof}[Proof of Theorem~\ref{t.dichotomy_minimal}]
Let $\mathcal{A}$ be an arbitrary continuous cocycle over the minimal dynamics $f$.
Consider the finest dominated splitting of the cocycle (see \cite[Theorem~B.2]{BDV}; see also \cite{Sel,BlumLat}), that is, the invariant splitting $E_1 \oplus \cdots \oplus E_s$ which is dominated (i.e., satisfies inequalities \eqref{e.def_domination} for some $n_0$), and the number of bundles $s$ is maximal. The case $s=1$ corresponds to the situation where the cocycle admits no dominated splitting into non-trivial subbundles.  
The subbundles are continuous, by Proposition~\ref{p.DS_is_cont}. 
For each $i$, let $\mathcal{R}_i$ be the set of LP-regular points of the restriction of the cocycle to the $i$\textsuperscript{th} bundle $E_i$. By Lemma~\ref{l:minimal_no_dom}, 
each $\mathcal{R}_i$ is either a meager set or the whole $X$, and the latter alternative occurs only if the restricted cocycle $\mathcal{A}|E_i$ is completely regular.  
Note that the set of LP-regular points of $\mathcal{A}$ is $\mathcal{R} = \mathcal{R}_1 \cap \cdots \cap \mathcal{R}_k$. Therefore, $\mathcal{R}$ is either a meager set or the whole $X$. Furthermore, if $\mathcal{R} = X$, then for every $i$ we have  $\mathcal{R}_i = X$, which implies that each $\mathcal{A}|E_i$ is completely regular, and therefore $\mathcal{A}$ itself is completely regular. 
\end{proof}

\section{Further discussion of the notion of complete regularity} \label{s.independence}

In this section, we return to our definition of complete regularity. Recall that a continuous $d$-dimensional linear cocycle over a homeomorphism of a compact metric space is completely regular when
\begin{enumerate}
\item\label{i.again_CR1} every point is LP-regular, and
\item\label{i.again_CR2} the Lyapunov spectra of all (LP-regular) points are the same. 
\end{enumerate} 
In this section, we investigate the interdependency of these two conditions. We will construct examples showing that they are essentially independent -- in a sense to be made precise.

\subsection{All points being regular does not imply complete regularity}

By Theorem~\ref{t.dichotomy_minimal}, if the dynamics $f$ is minimal, then 
Condition~\eqref{i.again_CR1} in the definition of complete regularity does imply Condition~\eqref{i.again_CR2}, that is, if a cocycle  has the property that all points are LP-regular, then the Lyapunov exponents do not depend of the point. 

On the other hand, if $f$ is non-minimal, then Condition~\eqref{i.again_CR1} does not imply Condition~\eqref{i.again_CR2}. For example, let $f$ be the automorphism of the two-torus $(\R/\Z)^2$ induced by the matrix $\left( \begin{smallmatrix} 1 & 1 \\ 0 & 1 \end{smallmatrix} \right)$, that is, 
\begin{equation}\label{e.twist}
f(x,y) \coloneqq (x+y,y) \bmod \Z^2 \, .
\end{equation}
If we take a continuous cocycle of diagonal matrices, then every point will be LP-regular. But the Lyapunov exponents will not be constant, in general. This example was pointed out before in \cite[Example 4.1]{JohnZamp}.

Let us exhibit an example where $f$ is topologically transitive. Consider the group $G \coloneqq \mathrm{SL}(2,\R)$, the lattice $\Gamma \coloneqq \mathrm{SL}(2,\Z)$, and the quotient $N\coloneqq{_\Gamma}\backslash^G$, that is, the set of right cosets $\Gamma g$. Then $N$ is a Hausdorff but non-compact space with respect to the quotient topology (actually it is the unit tangent bundle of the modular surface). Let $X \coloneqq N \cup \{\infty\}$ be the one-point compactification of $N$. Define $f \colon X \to X$ by $f(\infty) \coloneqq \infty$ and 
\[
f(\Gamma g) \coloneqq  \Gamma g \left( \begin{smallmatrix} 1 & 1 \\ 0 & 1 \end{smallmatrix} \right) \, .
\]
The map $f$ is a homeomorphism and, like the previous example \eqref{e.twist}, is such the Birkhoff averages of every continuous function converge pointwise (see \cite[Theorem~5.1]{Dani}); furthermore, $f$ is topologically transitive. Arguing as before, Condition~\eqref{i.again_CR1} does not imply Condition~\eqref{i.again_CR2} for this map $f$.

The following result provides a similar example with a derivative cocycle of a diffeomorphism.

\begin{theorem}\label{thm-unipotent}
There exists a topologically transitive diffeomorphism $f: M\to M$ of a compact connected smooth Riemannian manifold $M$ of dimension $11$ such that every point $x\in M$ is LP-regular for the derivative cocycle and the set of Lyapunov vectors
$$
\big\{(\chi_1(x), \dots, \chi_{11}(x)) \in \R^{11} \ : \  x\in M\big \}
$$
has the power of the continuum.
\end{theorem}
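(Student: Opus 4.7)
The example of right translation by $u = \left(\begin{smallmatrix} 1 & 1 \\ 0 & 1\end{smallmatrix}\right)$ on the $3$-dimensional modular quotient cited immediately above is topologically transitive with pointwise LP-regular derivative cocycle (all exponents zero), but lives on a non-compact space and produces only a single Lyapunov vector. My plan is to assemble from it a diffeomorphism $f$ of a compact connected smooth $11$-manifold that inherits the first two properties while exhibiting continuum-many Lyapunov vectors.

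My first step is to fix a smooth compact model $(N_0, f_0)$ of a horocycle-type time-one map, of some small dimension $d_0$, whose derivative cocycle is of continuous unipotent amenable form in the sense of Example~\ref{ex.amenable_bundle}: this immediately forces every point of $N_0$ to be LP-regular (with zero Lyapunov spectrum) and, by a careful compactification argument, $f_0$ is arranged to be topologically transitive. Second, I would build an $f_0$-equivariant fiber bundle $M \to N_0$ with $(11-d_0)$-dimensional fiber $P$ on which $f$ acts by a family of diffeomorphisms $\varphi_n \colon P \to P$ such that each $D\varphi_n$ is in block-triangular form with orthogonal diagonal blocks, but with diagonal scalings depending continuously and injectively on a point in a continuum-sized $f$-invariant family of submanifolds of $P$. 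Since the combined derivative is still in amenable-reducible form, Example~\ref{ex.amenable_bundle} applies again to yield pointwise LP-regularity on $M$, and the invariant family furnishes a continuum of distinct Lyapunov vectors.

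The main obstacle is ensuring that $f$ is topologically transitive. A direct product would immediately fail, because each Lyapunov level set would be a forward-invariant subset with nonempty interior complement. To overcome this, I would allow the twist $\varphi_n$ to vary with $n$ so that it preserves the invariant foliation of $P$ by Lyapunov level sets but drives dense excursions within each leaf along $f_0$-orbits; combining this with transitivity of $f_0$ and equidistribution of Birkhoff sums then yields topological transitivity of the skew product on $M$. The delicate balance between (i) keeping the Lyapunov level sets $f$-invariant, (ii) arranging density of orbits on $M$, and (iii) preserving the amenable-reducible form of $Df$ is what forces the dimension count and produces $d_0 + \dim P = 11$.
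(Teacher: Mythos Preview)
Your proposal has a genuine gap at its core. You want to invoke Example~\ref{ex.amenable_bundle} to certify pointwise LP-regularity, but that example requires the diagonal scalings $e^{c_i}$ to be \emph{global constants}; the conclusion is precisely complete regularity, i.e.\ the same Lyapunov spectrum at every point. Letting the $c_i$ vary continuously with the point (as you propose) destroys the hypothesis, and in any case runs into the tension you yourself flag: if the scalings are continuous and constant along orbits, transitivity forces them to be globally constant. Your fix --- preserving an invariant foliation of Lyapunov level sets while simultaneously having dense orbits --- is contradictory as stated: a closed $f$-invariant proper subset obstructs transitivity, full stop. No amount of ``dense excursions within each leaf'' gets around this if the leaves themselves are invariant.

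What is missing is a mechanism that makes Birkhoff averages of a smooth function converge \emph{everywhere} but to \emph{different} values along different orbits, without the level sets of the limit being closed. The paper supplies this via Ratner's equidistribution theorem. Concretely: take $N = \Gamma\backslash\mathrm{SL}(3,\R)$ for a suitable cocompact lattice $\Gamma$ (this is compact, 8-dimensional), and let $\varphi^1$ be the time-one map of a unipotent flow. Ratner's theorem guarantees that every orbit equidistributes to some homogeneous measure $m_x$, and one can arrange (using a one-parameter family of $\mathrm{SL}(2,\R)$-orbit closures) that $x \mapsto \int_N \tau\,dm_x$ takes continuum-many values for an appropriate $\tau$. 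Then form the skew product $f_\alpha(x,y) = (\varphi^1(x), \psi^{\tau(x)+\alpha}(y))$ on $M = N \times S$, where $\psi^t$ is the suspension of a hyperbolic toral automorphism on the 3-manifold $S$. The nonzero Lyapunov exponents at $(x,y)$ are $\pm(\log\lambda)\int_N(\tau+\alpha)\,dm_x$, so they vary in a continuum; regularity follows because the Birkhoff limits exist everywhere by Ratner; and transitivity follows for a generic $\alpha$ from the Pugh--Shub theorem on ergodic elements of an $\R\times\Z$-action. The dimension $8+3=11$ is forced by this construction, not by any abstract count of the sort you sketched.
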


For our construction we need the following objects:
\begin{enumerate}
	
\item The group $G \coloneqq \mathrm{SL}(3,\mathbb{R})$, which is a simple connected Lie group; we identify $\mathrm{SL}(2,\R)$ with the subgroup 
consisting of matrices of the form $\left( \begin{smallmatrix}  * & * & 0 \\ * & * & 0 \\ 0 & 0  & 1 \end{smallmatrix} \right)$.

\item A co-compact lattice $\Gamma\subset G$ such that $\Gamma\cap\mathrm{SL}(2,\R)$ is a co-compact lattice in $\mathrm{SL}(2,\R)$ (see \cite[Exerc.~4, {\S}18.6]{WMorris_groups}).

\item The quotient $N\coloneqq{_\Gamma}\backslash^G$, that is, the set of right cosets $\Gamma g$; this is a compact smooth manifold of dimension $8$; we denote by $m_N$ the normalized volume in $N$ which is induced on $N$ by the Haar measure on the group $G$. 

\item The one-parameter subgroup of unipotent matrices 
\begin{equation*}
u^t \coloneqq \begin{pmatrix} 1&t&0\\0&1&0\\0&0&1\end{pmatrix}
\end{equation*}
and the corresponding unipotent flow 
\begin{align*}
\varphi^t \colon N &\to N \\ 
\Gamma g &\mapsto \Gamma g u^t \, ,
\end{align*}
which preserves the volume measure $m_N$.

\item A $C^\infty$ strictly positive function $\tau:N\to\mathbb{R}$ whose choice will be specified later.

\item The Anosov flow $\{\psi^t\}$ which is a suspension flow with constant roof function $1$ over an Anosov automorphism of the two-dimensional torus $\mathbb{T}^2$ given by the matrix $A= \left( \begin{smallmatrix} 2&1\\1&1 \end{smallmatrix} \right)$; we denote by $S$ the $3$-dimensional suspension manifold; the flow $\{\psi^t\}$ preserves volume in $S$ which we denote by $m_S$.
\end{enumerate}

Denote $M \coloneqq N\times S$. Given $\alpha>0$, consider a skew-product map 
$f_\alpha: M\to M$ given by: 
$$
f_\alpha(x, y) \coloneqq (\varphi^1(x), \psi^{\tau(x)+\alpha}(y)).
$$
For every $\alpha$, the map $f_\alpha$ is a $C^\infty$ diffeomorphism preserving a smooth measure on $M$. We will show that with an appropriate choice of the parameter $\alpha$ and the function $\tau$, the map $f_\alpha$  has the desired properties. 

For every point $(x,y)\in M$ we have the following splitting of the tangent space 
\begin{equation}\label{e.splitting}
T_{(x,y)}M=T_{(x,y)} (N\times \{y\})\oplus E^\mathrm{s}_{f_\alpha}(x,y)\oplus E^\mathrm{u}_{f_\alpha}(x,y)\oplus E^0_{f_\alpha}(x,y),
\end{equation}
where 
\begin{equation}\label{e.splitting1}
E^\mathrm{s}_{f_\alpha}(x,y)=E^\mathrm{s}_{\psi^1}(y) \text{ and } E^\mathrm{u}_{f_\alpha}(x,y)=E^\mathrm{u}_{\psi^1}(y)
\end{equation}
are the one-dimensional stable and, respectively, the one-dimensional unstable subspaces  and 
\begin{equation}\label{e.splitting2}
E^0_{f_\alpha}(x,y)=\mathrm{Span}\left\{\tfrac{\partial}{\partial t}\right\}=E^0_{\psi^t}(y).
\end{equation} 

By Ratner's equidistribution theorem \cite[Theorem~B]{Ratner}, for every $x \in N$, the future and past orbits $\{\varphi^{\pm n} x\}_{n \ge 0}$ are equidistributed with respect to a probability measure $m_x$ on~$N$, that is, for every continuous function $h \colon N \to \R$, we have 
\begin{equation}\label{e.discrete_Ratner}
\lim_{n \to \infty} \frac{1}{n} \sum_{j=0}^{n-1} h(\varphi^{\pm j} x) = \int_N h \, dm_x \, .
\end{equation}
Furthermore (see \cite[p.~255]{Ratner}), the measures $m_x$ are ergodic, and thus $\varphi$ is \emph{pointwise ergodic} in the terminology of \cite{DownWeiss}.

The desired properties of the map $f_\alpha$ follow immediately from the following three lemmas.

\begin{lemma}\label{lem-continuum}
There is a smooth strictly positive function $\tau: N\to\mathbb{R}$ such that the set 
$$
\Bigl\{\int_{N}\tau\,dm_x: x \in N\Bigr\}
$$
has the power of continuum.
\end{lemma}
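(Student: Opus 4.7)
My proof plan is to exhibit a continuous one-parameter family of closed $\mathrm{SL}(2,\R)$-orbits in $N$, each carrying a distinct $\varphi^t$-ergodic invariant measure, and then to pick $\tau$ so that the resulting continuous map $s \mapsto \int \tau \, d\mu_s$ is non-constant; its image will then be an interval of positive length, hence of cardinality $2^{\aleph_0}$.

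The key structural fact I will exploit is that $\mathrm{SL}(2,\R) \subset G = \mathrm{SL}(3,\R)$ has the nontrivial centralizer
\[
Z \coloneqq \{a_s \coloneqq \mathrm{diag}(e^s, e^s, e^{-2s}) : s \in \R\}.
\]
For each $s$, set $Y_s \coloneqq \Gamma a_s \mathrm{SL}(2,\R) \subset N$. Since $a_s$ commutes with $\mathrm{SL}(2,\R)$, the stabilizer of $\Gamma a_s$ under the right $\mathrm{SL}(2,\R)$-action is exactly $\Gamma_2 = \Gamma \cap \mathrm{SL}(2,\R)$, so $\Gamma_2 h \mapsto \Gamma a_s h$ identifies $Y_s$ diffeomorphically with the compact quotient $\Gamma_2 \backslash \mathrm{SL}(2,\R)$. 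Under this identification $\varphi^t|_{Y_s}$ becomes the classical horocycle flow on $\Gamma_2 \backslash \mathrm{SL}(2,\R)$, which is uniquely ergodic by Furstenberg's theorem. Consequently, for every $x \in Y_s$, the measure $m_x$ of \eqref{e.discrete_Ratner} equals the unique $\mathrm{SL}(2,\R)$-invariant Borel probability measure $\mu_s$ supported on $Y_s$.

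Because right-translation $R_{a_s} \colon N \to N$ commutes with the $\mathrm{SL}(2,\R)$-action and maps $Y_0$ onto $Y_s$, the pushforward $(R_{a_s})_* \mu_0$ is an $\mathrm{SL}(2,\R)$-invariant probability measure on $Y_s$, so by unique ergodicity $\mu_s = (R_{a_s})_* \mu_0$. For any continuous $\tau \colon N \to \R$ one therefore has
\[
F_\tau(s) \coloneqq \int_N \tau \, d\mu_s = \int_N \tau(y \cdot a_s) \, d\mu_0(y),
\]
which depends continuously on $s$. If $F_\tau$ is non-constant, its image is a connected subset of $\R$ of positive length, thus has cardinality $2^{\aleph_0}$ and is contained in $\{\int_N \tau \, dm_x : x \in N\}$, finishing the argument.

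The final step is to exhibit smooth strictly positive $\tau$ with $F_\tau$ non-constant. If $F_\tau$ were constant for every continuous $\tau$, then $\mu_s = \mu_0$ for all $s$ by Riesz representation, which is impossible: the velocity $\tfrac{d}{ds}\big|_{s=0} a_s = \mathrm{diag}(1,1,-2)$ has top-left $2\times 2$ block $I_2$ of trace $2$, so it is not tangent to the embedded $\mathrm{SL}(2,\R)$ at the identity; hence the curve $s \mapsto \Gamma a_s$ leaves $Y_0$ transversally, forcing $Y_s \neq Y_0$ and $\mu_s \neq \mu_0$ for small $s \neq 0$. Thus some smooth $\tau_0$ makes $F_{\tau_0}$ non-constant, and adding a large positive constant (legitimate since $N$ is compact) yields the required strictly positive smooth $\tau$. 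The only step I expect to require genuine care is this transversality check — that the family $\{Y_s\}$ is genuinely non-degenerate; the rest is routine unique-ergodicity and Riesz-duality bookkeeping.
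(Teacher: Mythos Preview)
Your proof is correct and follows essentially the same route as the paper: both construct the one-parameter family of closed $\mathrm{SL}(2,\R)$-orbits $Y_s = \Gamma a_s \mathrm{SL}(2,\R)$ (the paper's $N_s$) via the centralizing subgroup $a_s = \mathrm{diag}(e^s,e^s,e^{-2s})$, invoke unique ergodicity of the horocycle dynamics to identify $m_x$ with the homogeneous measure $\mu_s$ on $Y_s$, and then show $s \mapsto \mu_s$ is non-constant by checking that $Y_s \neq Y_0$ for small $s \neq 0$. One small point to tighten: since $m_x$ in \eqref{e.discrete_Ratner} is defined through the time-$1$ map $\varphi^1$, you need unique ergodicity of the time-$1$ horocycle map (not just the flow) to conclude $m_x = \mu_s$ for every $x \in Y_s$; the paper handles this by citing \cite{EP}.
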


\begin{lemma}\label{lem-transitivity}
For every smooth strictly positive  function $\tau:N\to \mathbb{R}$ there exists a countable set $A \subset \R_+$ such that for every $\alpha \in \R_+ \setminus A$, the map $f_\alpha$ is topologically transitive.
\end{lemma}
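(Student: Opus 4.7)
My plan is to establish measure-theoretic ergodicity of $f_\alpha$ with respect to the product measure $\mu \coloneqq m_N \times m_S$ for every $\alpha$ outside a countable exceptional set $A$, and then deduce topological transitivity from the full support of $\mu$ (ergodicity forces $\mu$-almost every orbit to be dense in $M$). Invariance of $\mu$ under $f_\alpha$ is immediate. To analyze ergodicity, I will exploit the structure of $S$ as a $\mathbb{T}^2$-bundle over $S^1$ arising from the suspension of $A$ with constant roof $1$: one has $M \cong N \times \mathbb{T}^2 \times [0,1)$ under the identification $(x, z, s+1) \sim (x, A z, s)$.

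Given an $f_\alpha$-invariant $F \in L^2(M)$, I expand in Fourier series along the $\mathbb{T}^2$-fibre, writing $F(x, z, s) = \sum_{m \in \Z^2} \hat F_m(x, s) \, e^{2\pi i m \cdot z}$. A direct computation translates the invariance equation $F \circ f_\alpha = F$ into
\[
\hat F_m(x, s) = \hat F_{(A^T)^{-n(x, s, \alpha)} m}\bigl(\bar f_\alpha(x, s)\bigr), \quad m \in \Z^2,
\]
where $\bar f_\alpha(x, s) \coloneqq (\varphi(x), \{s + \tau(x) + \alpha\})$ is the induced circle extension of $\varphi^1$ on $N \times S^1$ and $n(x, s, \alpha) \coloneqq \lfloor s + \tau(x) + \alpha\rfloor$.

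For $m = 0$, the rule reduces to $\bar f_\alpha$-invariance of $\hat F_0$. Since $\varphi^1$ is mixing on $(N, m_N)$ by the Howe--Moore theorem (and hence has continuous spectrum on $L^2(N) \ominus \C$), Anzai's criterion applied to $\bar f_\alpha$ gives ergodicity for all $\alpha$ outside a countable set $A$: ergodicity fails only at resonances where some $k \ne 0$ satisfies both that $e^{2\pi i k \tau}$ is a measurable coboundary of $\varphi^1$ \emph{and} $k \alpha \in \Z$. Hence for $\alpha \notin A$ the coefficient $\hat F_0$ is constant.

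For $m \ne 0$, iteration of the transformation rule yields $\hat F_m(x, s) = \hat F_{(A^T)^{-N_n(x, s)} m}\bigl(\bar f_\alpha^n(x, s)\bigr)$ with $N_n(x, s) = \sum_{j=0}^{n-1} n(\bar f_\alpha^j(x, s), \alpha)$. Because $\tau > 0$ and $\alpha \ge 0$, the telescoping identity $N_n(x, s) = T_n(x) + n\alpha + O(1)$ combined with Ratner's pointwise equidistribution gives $N_n(x, s) \to +\infty$ for every $(x, s)$; hyperbolicity of $A^T$ then forces $(A^T)^{-N_n(x, s)} m \to \infty$ in $\Z^2$ for any $m \ne 0$. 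Taking $L^2$-norms and changing variables by the measure-preserving $\bar f_\alpha^n$, the problem reduces to showing $\int_{N \times S^1} |\hat F_{k_n(x', s')}(x', s')|^2 \, dx' \, ds' \to 0$ as $n \to \infty$, where $|k_n(x', s')| \to \infty$ almost everywhere. This I handle by splitting the integral into a finite-index window $|k| \le K$ (which vanishes by dominated convergence, since the level set $\{k_n = k\}$ has measure going to zero for each fixed $k$) and the tail $|k| > K$ (bounded by $\sum_{|k| > K} \|\hat F_k\|^2$, small by Bessel's identity). The hardest technical step is this final tail estimate; the explicit hyperbolic structure of $A^T$ makes it routine. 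Combining the two cases gives that $F$ is constant, which establishes ergodicity of $f_\alpha$ for every $\alpha \in \R_+ \setminus A$ and completes the argument.
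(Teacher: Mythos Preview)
Your overall strategy---prove ergodicity of $(f_\alpha, m_N \times m_S)$ for all but countably many $\alpha$, then deduce transitivity from full support---matches the paper, but the mechanism is entirely different. The paper observes that $f_\alpha$ is the element $(\alpha,1)$ of the commuting $\R\times\Z$-action generated by the fibre flow $\mathrm{id}\times\psi^t$ and the map $f_0$; ergodicity of the full action is easy (reduce to ergodicity of $\psi^t$ on $S$ and of $\varphi^1$ on $N$), and then the Pugh--Shub theorem gives countability of the non-ergodic parameters in one stroke. Your route instead exploits the concrete realisation of $S$ as a $\T^2$-suspension: Fourier-expand along the torus fibre, handle the zero mode via an Anzai-type analysis of the circle extension $\bar f_\alpha$, and kill nonzero modes using hyperbolicity of $A$. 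The paper's argument is shorter and would apply verbatim if $\psi^t$ were replaced by any ergodic flow, whereas your argument is self-contained (no black-box Pugh--Shub) but tied to the explicit toral structure.

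One point needs correcting. Your description of the bad set in the Anzai step is not right: non-ergodicity of $\bar f_\alpha$ for some $k\neq 0$ means $e^{2\pi i k(\tau+\alpha)}$ is a multiplicative coboundary, which is \emph{not} equivalent to $e^{2\pi i k\tau}$ being a coboundary together with $k\alpha\in\Z$; rather it says $e^{2\pi i k\tau}$ is cohomologous to the constant $e^{-2\pi i k\alpha}$. The countability conclusion is nevertheless correct: if $\alpha_1,\alpha_2$ are both bad for the same $k$, then the constant $e^{2\pi i k(\alpha_1-\alpha_2)}$ is a coboundary, hence an $L^2$-eigenvalue of $\varphi^1$, hence equal to $1$ by Howe--Moore (weak mixing), so $\alpha_1-\alpha_2\in\frac{1}{k}\Z$ and each $B_k$ lies in a single coset of $\frac{1}{k}\Z$. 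With this fix your argument goes through; the $m\neq 0$ part is fine since $\tau>0$ already forces $N_n\to+\infty$ (Ratner is not needed there) and the eigendirections of $A$ are irrational, so $A^{-N}m\to\infty$ for every nonzero $m\in\Z^2$.
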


\begin{lemma}\label{lem-formulas}
For any choice of a smooth function $\tau$ and any $\alpha\in(0,1)$ the splitting \eqref{e.splitting} is continuous, invariant under $df_\alpha$, and for every $(x,y)\in M$,
\begin{align}
\label{e.lyap-exp-1}
\lim_{n\to\pm\infty}&\frac1{n}\log\|Df_\alpha^n (v)\|=0 \quad \text{for every } v \in T_{(x,y)}(N\times\{y\}) \, ,
\\
\label{e.lyap-exp-2}
\lim_{n\to\pm\infty}&\frac1{n}\log\|Df_\alpha^n|E^\mathrm{s}_{f_\alpha}(x,y)\|
=-(\log\lambda)\int_{N}(\tau+\alpha)\,dm_x \, ,
\\
\label{e.lyap-exp-3}
\lim_{n\to\pm\infty}&\frac1{n}\log\|Df_\alpha^n|E^\mathrm{u}_{f_\alpha}(x,y)\|
=\phantom{-}(\log\lambda)\int_{N}(\tau+\alpha)\,dm_x \, ,
\\
\label{e.lyap-exp-4}
\lim_{n\to\pm\infty}&\frac1{n}\log\|Df_\alpha^n|E^0_{f_\alpha}(x,y)\|=0 \, ,
\end{align}
where $\lambda>1$ is the top eigenvalue of the matrix $A$. In particular, every point 
$(x,y)\in M$ is LP-regular.
\end{lemma}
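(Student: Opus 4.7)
The plan is to verify invariance and continuity of the splitting, then compute each of the four Lyapunov limits, and finally confirm LP-regularity. Writing out the skew-product derivative, $df_\alpha$ sends a vertical vector $(0,w)\in T_yS$ to $(0,d\psi^{\tau(x)+\alpha}(y)\,w)$, and a horizontal vector $(\xi,0)\in T(N\times\{y\})$ to $(d\varphi^1\xi,\,d\tau_x(\xi)\,\dot\psi)$, where $\dot\psi\in E^0_\psi$ is the suspension flow vector field. Since the Anosov splitting $E^s_\psi\oplus E^u_\psi\oplus E^0_\psi$ is $d\psi^t$-invariant and continuous, the line bundles $E^s_{f_\alpha}$, $E^u_{f_\alpha}$, $E^0_{f_\alpha}$ defined in \eqref{e.splitting1}--\eqref{e.splitting2} are individually $df_\alpha$-invariant and continuous; the bundle $T(N\times\{y\})$ is smooth and hence continuous, but fails strict invariance. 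However, the $9$-dimensional sub-bundle $T(N\times\{y\})\oplus E^0_{f_\alpha}$ \emph{is} invariant, so the true Oseledets decomposition is the coarser three-piece splitting
\[
T_{(x,y)}M \;=\; E^u_{f_\alpha}\,\oplus\,\bigl[T(N\times\{y\})\oplus E^0_{f_\alpha}\bigr]\,\oplus\,E^s_{f_\alpha},
\]
and \eqref{e.splitting} merely refines its central piece continuously, though not invariantly.

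\textbf{Hyperbolic and flow directions.} For $v\in E^s_\psi(y)$ the vertical-vector formula iterates to $df_\alpha^n(0,v)=(0,d\psi^{T_n(x)+n\alpha}(y)v)$ with $T_n(x)\coloneqq\sum_{j=0}^{n-1}\tau(\varphi^j x)$ (and the analogous telescoping expression for $n<0$). Uniform contraction of $E^s_\psi$ by $\psi^t$ at rate $\lambda^{-t}$ gives
\[
\tfrac{1}{n}\log\|df_\alpha^n(0,v)\| \;=\; -(\log\lambda)\,\Bigl(\tfrac{T_n(x)}{n}+\alpha\Bigr)+o(1),
\]
and the pointwise equidistribution \eqref{e.discrete_Ratner} applied to the continuous function $\tau$ yields $T_n(x)/n\to\int_N\tau\,dm_x$ for \emph{every} $x\in N$ as $n\to\pm\infty$; this proves \eqref{e.lyap-exp-2}, and the same argument with the opposite sign proves \eqref{e.lyap-exp-3}. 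On $E^0_{f_\alpha}=E^0_\psi$ the flow acts by translation so that $d\psi^t\dot\psi=\dot\psi$, forcing $df_\alpha^n|E^0_{f_\alpha}$ to have uniformly bounded norm and establishing \eqref{e.lyap-exp-4}.

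\textbf{Central direction and LP-regularity.} For $(\xi,0)\in T(N\times\{y\})$, iteration produces $df_\alpha^n(\xi,0)=(d\varphi^n\xi,W_n)$, where $W_n$ is a sum of at most $|n|$ uniformly bounded vectors in $E^0_\psi$. Since $\varphi^t$ is unipotent, both $\|d\varphi^n\xi\|$ and $\|W_n\|$ are polynomial in $|n|$, giving \eqref{e.lyap-exp-1}. Continuity of the three-piece invariant splitting on the compact manifold $M$ produces a uniform lower bound on angles between its pieces, so the angle condition in the definition of LP-regularity is automatic; combined with the limits above, every $(x,y)\in M$ is LP-regular. The principal obstacle is the failure of strict $df_\alpha$-invariance of $T(N\times\{y\})$, caused by the twist $d\tau_x(\xi)\,\dot\psi$ leaking into the flow direction; this is dissolved by passing to the coarser invariant splitting and noting that the leakage lands in a bundle already carrying exponent zero, so no mixing with the hyperbolic directions can occur and the reduction to Ratner's pointwise theorem proceeds cleanly.
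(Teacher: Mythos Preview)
Your argument is correct and follows essentially the same route as the paper: iterate the skew-product formula, invoke Ratner's pointwise equidistribution \eqref{e.discrete_Ratner} to handle the Birkhoff sums of $\tau$, and use the uniform contraction/expansion rate $\lambda^{\mp t}$ on $E^{\mathrm{s}/\mathrm{u}}_\psi$. You are in fact more careful than the paper on one point: you correctly observe that $T(N\times\{y\})$ is \emph{not} $df_\alpha$-invariant, because of the twist term $d\tau_x(\xi)\,\dot\psi$, so that the four-piece splitting \eqref{e.splitting} is not strictly invariant as the lemma literally asserts; your remedy of passing to the coarser three-piece splitting $E^{\mathrm u}\oplus[T(N\times\{y\})\oplus E^0]\oplus E^{\mathrm s}$ is the right one and is implicitly what the paper's terse proof of \eqref{e.lyap-exp-1} relies on.

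One small slip: the summands making up $W_n$ are \emph{not} uniformly bounded. The $j$-th term is (up to bounded distortion from moving $\dot\psi$ along the flow) $d\tau_{\varphi^j x}(d\varphi^j\xi)\,\dot\psi$, whose size is of order $\|d\varphi^j\xi\|$, and this grows polynomially in $j$ since $\varphi$ is unipotent. Your stated conclusion that $\|W_n\|$ is polynomial in $|n|$ is nevertheless correct, since a sum of $|n|$ polynomially bounded terms is still polynomial; just adjust the justification.
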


We proceed with proofs of these three lemmas.

\begin{proof}[Proof of Lemma \ref{lem-continuum}]
Let $\tilde{G}$ be the subgroup of $G$ consisting of the matrices of the form $\left( \begin{smallmatrix}  * & * & 0 \\ * & * & 0 \\ 0 & 0  & 1 \end{smallmatrix} \right)$, that is, the standard copy of $\mathrm{SL}(2,\R)$ in $\mathrm{SL}(3,\R)$. Let $\pi \colon G \to N$ be the quotient map $\pi (g) \coloneqq \Gamma g$ and let $N_0 \coloneqq \pi(\tilde G)$. Since the unipotent subgroup $(u^t)$ is contained in $\tilde G$, the set $N_0$ is invariant under the unipotent flow $\{\varphi^t\}$. Recall that our lattice $\Gamma < G$ was chosen so that 
$\tilde \Gamma \coloneqq \Gamma \cap \tilde G$ is a co-compact lattice in $\tilde G$. Therefore, the quotient  $\tilde N\coloneqq{_{\tilde\Gamma}}\backslash^{\tilde G}$ is a compact $3$-manifold which is actually the unit tangent bundle of a compact hyperbolic surface. We define a unipotent flow on $\tilde N$ by the formula 
$\tilde{\varphi}^t (\tilde \Gamma \tilde g) \coloneqq \tilde \Gamma \tilde g u^t$; this is actually the horocycle flow on the unit tangent bundle. Define a map $\tilde{N} \to N_0$ by 
$\tilde \Gamma \tilde{g} \mapsto \Gamma \tilde{g}$. This map is a homeomorphism, and it conjugates the flows $\tilde{\varphi}^t$ and $\varphi^t|_{N_0}$. By a theorem of Furstenberg, the horocycle flow $\tilde{\varphi}^t$ is strictly ergodic (i.e., minimal and uniquely ergodic); furthermore, the time-$1$ map $\tilde{\varphi}^1$ is also strictly ergodic: see \cite{EP}. We conclude that the submanifold $N_0$ of $N$ supports a unique $\varphi^1$-invariant measure, which we call $m_{N_0}$. 

Next, consider the one-parameter subgroup of $G$ formed by the matrices of the form
\[
a^s \coloneqq \begin{pmatrix} e^s & 0 & 0 \\ 0 & e^s & 0 \\ 0 & 0 & e^{-2s} \end{pmatrix} \, , \quad s \in \R \, .
\]
Note that $a^s \tilde{g} = \tilde{g} a^s$ for every $\tilde{g} \in \tilde{G}$, and in particular, $a^s u^t = u^t a^s$.
As a consequence, the flow 
\begin{align*}
\zeta^s \colon N &\to N \\ 
\Gamma g &\mapsto \Gamma g a^s \, ;
\end{align*}
commutes with the unipotent flow, that is,
\[
\zeta^s \circ \varphi^t = \varphi^t \circ \zeta^s \, .
\]
Therefore, the submanifold $N_s \coloneqq \zeta^s(N_0)$ supports a unique $\varphi^1$-invariant measure, namely $m_{N_s} \coloneqq (\zeta^s)_*(m_{N_0})$. For every point $x$ in $N_s$, the measure $m_x$ defined by \eqref{e.discrete_Ratner} coincides with $m_{N_s}$.

We have constructed a continuous curve $s \mapsto m_{N_s}$ in the space of $\varphi^1$-invariant Borel probability measures on $N$ (endowed with the weak topology). We will prove that this curve is \emph{non-constant}. Once we do that, the lemma will immediately follow, as it will be sufficient to take any $C^\infty$ strictly positive function $\tau$ for which the function $s \mapsto \int_{N}\tau\,dm_{N_s}$ is non-constant. 

Note that $a^s \not \in \tilde G$ for all nonzero $s$. Since the projection $\pi \colon G \to N$ is a covering map and $\pi(\tilde{G})$ is the compact set $N_0$, it follows that 
$\pi(a^s) \not\in N_0$ for all sufficiently small $s \neq 0$. But $\pi(a_s) \in N_s$, and so $N_s \neq N_0$ for all sufficiently small $s \neq 0$. Since $N_s$ is the support of the measure $m_{N_s}$, this proves that the curve $s \mapsto m_{N_s}$ is non-constant, as we wanted to show.
\end{proof}

\begin{proof}[Proof of Lemma \ref{lem-transitivity}]
Consider the following two dynamical systems on the manifold $M = N \times S$:
\begin{itemize} 
	\item the flow $\mathrm{id} \times \psi^t$ whose time $t$ map is $(x,y) \mapsto (x,\psi^t(y))$;
	\item the diffeomorphism $f_0 (x,y) = (\varphi^1(x),\psi^{\tau(x)} y)$.
\end{itemize}
Note that the flow and the diffeomorphism above commute. Therefore, we obtain an action of the group $\R \times \Z$ on $M$. Note that the action preserves the product measure $m_N \times m_S$. 

We claim that the product measure is ergodic with respect to the $\R \times \Z$-action. Indeed, if $h \colon M \to \R$ is an invariant bounded Borel measurable function, then it is also invariant under the flow $\mathrm{id} \times \psi^t$. Since the measure $m_S$ is ergodic for the flow $\psi^t$, there exists some bounded Borel measurable function $h_1 \colon N \to \R$ such that $h(x,y) = h_1(x)$ for $m_N \times m_S$-almost every $(x,y)$. Since $h \circ f_0 = h$, we have $h_1 \circ \varphi^1 = h_1$ $m_N$-almost everywhere. Ergodicity implies that $h_1$ coincides with a constant $m_N$-almost everywhere. Hence, $h$ is constant $m_N \times m_S$-almost everywhere, completing the proof of ergodicity.

Note that the element $(\alpha,1)$ of the $\R \times \Z$-action is the diffeomorphism $f_\alpha$.
Let $A$ be the set of $\alpha \in \R_+$ such that $(f_\alpha, m_N \times m_S)$ is \emph{not} ergodic. By a result of Pugh and Shub \cite[Theorem~2]{PS}, the set $A$ is countable. Since the measure $m_N \times m_S$ is positive on open sets, the map $f_\alpha$ is topologically transitive whenever it is ergodic.
\end{proof}

\begin{proof}[Proof of Lemma \ref{lem-formulas}]
Equality \eqref{e.lyap-exp-1} follows from the definition of the map $f_\alpha$, and the following two facts: (a) all the Lyapunov exponents of $\varphi^t$ are zero; and (b) the Lyapunov exponent of the Anosov flow in its flow direction is zero. Equality \eqref{e.lyap-exp-4} follows from \eqref{e.splitting2} and the fact that the function $\tau$ is bounded. To prove equality \eqref{e.lyap-exp-2}, observe that for every $n>0$,
\[
f^n_\alpha(x,y)=(\varphi^n(x), \psi^{n\alpha+\tau_n(x)}(y)),
\]
where $\tau_n(x)=\sum_{k=0}^{n-1}\tau(\varphi^k(x))$. 
By property \eqref{e.discrete_Ratner}, for every $x \in N$
we have  
\[
\tau_n(x)=n\int_{N}\tau \,dm_x + o(n).
\]
The desired equality now follows from the fact that every $v\in E^\mathrm{s}(y)$ contracts under the Anosov flow by a constant factor $\lambda^t$. The proof of equality \eqref{e.lyap-exp-3} is similar. 
\end{proof}
We proved the lemmas and, as explained before, this completes the proof of Theorem~\ref{thm-unipotent}.

\subsection{Agreement of Lyapunov spectra does not imply complete regularity}

In this subsection we discuss the problem of whether Condition \eqref{i.again_CR2} alone is sufficient for complete regularity. Strictly speaking, as stated this condition subsumes the first and so the problem seems hollow. However, we may ask the following meaningful question: Suppose every two points that are LP-regular have the same Lyapunov spectrum. Does it follow that every point is LP-regular?

The answer is negative, as Example~\ref{ex.tangency} illustrates. In that example, most points are wandering; in fact, there is no invariant Borel probability measure whose support is $X$. 
The next result provides a more interesting example:

\begin{theorem}\label{t.2doesnotimply1}
There exists a homeomorphism $f \colon X \to X$ of a compact space $X$ admitting a fully-supported invariant Borel probability measure
and there exists a $2$-dimensional continuous linear cocycle $\mathcal{A}$ over $f$ with the following properties:
\begin{itemize}
\item all LP-regular points $x \in X$ have Lyapunov exponents
$\chi_1(x) = c$ and $\chi_2(x) = -c$, where $c$ is a positive constant;
\item there exist points in $X$ that are not LP-regular.
\end{itemize}
\end{theorem}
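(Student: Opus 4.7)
The plan is to construct the example as a non-uniformly hyperbolic $\mathrm{SL}^\pm(2,\R)$-cocycle over a uniquely ergodic minimal homeomorphism. Take $X = S^1$ with the irrational rotation $f = R_\alpha$, which is minimal and uniquely ergodic with fully-supported Lebesgue measure $\mu$. Choose a continuous $\mathrm{SL}^\pm(2,\R)$-valued cocycle $\mathcal{A}$ over $f$ that falls into case~\eqref{i:NUH} of Example~\ref{ex3}, so that $\chi_1(\mathcal{A},\mu) = c > 0$ but $\mathcal{A}$ is not uniformly hyperbolic. Such cocycles are provided by Herman~\cite{Herman} in the original construction and by Avila--Damanik~\cite{AD} in greater generality.

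The existence of non-LP-regular points is then immediate from the theory developed in the paper: by Proposition~\ref{p.trichotomy}, the cocycle $\mathcal{A}$ is not completely regular, and hence by Theorem~\ref{t.dichotomy_minimal} (applied to the minimal $f$) the set $\mathcal{R}$ of LP-regular points is meager in $X$; in particular $\mathcal{R} \neq X$. For the main property --- that every LP-regular point has Lyapunov exponents exactly $\pm c$ --- note first that, since $\mathcal{A}$ is $\mathrm{SL}^\pm(2,\R)$-valued, we have $\chi_1(x) + \chi_2(x) = 0$ at every LP-regular $x$, so it suffices to show $\chi_1(x) = c$. The upper bound $\chi_1(x) \le c$ follows from Proposition~\ref{p.SUSAET} applied to the subadditive sequence $\log\|\mathcal{A}(x,n)\|$ and combined with unique ergodicity of $f$, which yields $\sup_{x \in X}\tfrac{1}{n}\log\|\mathcal{A}(x,n)\| \to c$.

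The matching lower bound $\chi_1(x) \ge c$ at every LP-regular $x$ is the main obstacle: it is \emph{not} automatic from $\mathcal{A}$ being non-uniformly hyperbolic, because exceptional LP-regular points with strictly smaller exponent could a priori exist (for example, in the presence of some local reducibility of the cocycle at $x$ making $\mathcal{A}(x,n)v$ bounded along some direction). To rule out such pathologies I would strengthen the construction by choosing $\mathcal{A}$ so that the associated projective skew-product on $X \times \mathbb{P}(\R^2)$ is uniquely ergodic; combined with unique ergodicity of the base, this forces $\chi_1(x) = c$ at every LP-regular point via a variational argument. Cocycles satisfying this additional rigidity property can be engineered by adapting the Avila--Damanik technique, thereby delivering the example.
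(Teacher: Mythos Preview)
Your overall strategy --- take a non-uniformly hyperbolic $\mathrm{SL}^\pm(2,\R)$-cocycle over a uniquely ergodic minimal base and invoke Proposition~\ref{p.trichotomy} and Theorem~\ref{t.dichotomy_minimal} to get non-LP-regular points --- is sound, and you correctly pinpoint the real difficulty: the lower bound $\chi_1(x) \ge c$ at every LP-regular point $x$.  However, your proposed fix is impossible.  If the cocycle has $\chi_1(\mu) = c > 0$ with respect to the unique invariant measure $\mu$, then Oseledets produces two $\mu$-a.e.\ defined measurable invariant line fields $E^+$ and $E^-$, and pushing $\mu$ forward along the sections $x \mapsto (x,[E^\pm(x)])$ yields \emph{two distinct} ergodic invariant measures for the projective skew-product on $X \times \mathbb{P}(\R^2)$.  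Thus, the projective action is never uniquely ergodic when $c>0$, and the ``adapting Avila--Damanik to get projective unique ergodicity'' step cannot be carried out.  Even if one relaxes to ``exactly two ergodic projective measures'' (which is what one actually expects here), the variational argument you sketch only gives $\chi_1(x) \in [-c,c]$ for LP-regular $x$, not $\chi_1(x)=c$.

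The paper sidesteps this obstruction by abandoning the generic-cocycle approach entirely.  It uses a very specific Walters-type cocycle over a Veech-like subshift (constructed in~\cite{mono}) for which the constancy of the Lyapunov exponent at all LP-regular points is established by a direct combinatorial computation: the norm $\|\mathcal{A}(\omega,n)\|$ reduces to an explicit signed Birkhoff sum, and a lemma in~\cite{mono} shows that whenever this signed sum has a limit, that limit must equal a specific constant $c>0$.  In other words, the property ``all LP-regular points share the same spectrum'' is engineered into the example at the level of its symbolic description rather than deduced from abstract ergodic principles --- which is necessary precisely because, as you recognized, the general theory does not forbid exceptional LP-regular points with smaller exponent.
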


In fact, the map $f$ in our example is uniquely ergodic, and so, by Theorem~\ref{t.dichotomy_minimal}, points that are not LP-regular for the cocycle $\mathcal{A}$ form a residual subset of $X$.

We do not know if there exist examples of cocycles over a hyperbolic base dynamics satisfying the conclusions of Theorem~\ref{t.2doesnotimply1}. If that is the case, the matrix maps cannot be H\"older continuous, by Theorem~\ref{t.amenable_reduction}.

Another question is the following: Assuming Condition \eqref{i.again_CR1} in the definition of complete regularity, can Condition \eqref{i.again_CR2} be weakened to the following condition~(2')?
\begin{itemize}
	\item[(2')] all ergodic measures have the same Lyapunov spectrum.
\end{itemize}

We now turn to the proof of Theorem~\ref{t.2doesnotimply1}. 

If  $f \colon X \to X$ is a homeomorphism of a compact metric space $X$, we say that $f$ is a \emph{Veech-like map} if it is strictly ergodic (i.e.\ minimal and uniquely ergodic) and $f^2$ is minimal but \emph{not} uniquely ergodic. Veech-like maps were first constructed by Veech \cite{Veech69}. 

Walters \cite{Walters} has shown that given a Veech-like map $f$, one can construct a non-uniformly hyperbolic cocycle with base $f$. We will construct a specific Walters cocycle with some additional properties which will be  instrumental in our proof of  Theorem~\ref{t.2doesnotimply1}. For this cocycle we need the base Veech-like map constructed in \cite[Sec.~4]{mono} (the idea of this construction is due to Scott Schmieding).

\begin{proof}[Proof of Theorem~\ref{t.2doesnotimply1}]
Let $e_1, e_2, \dots$ be words in the alphabet $\mathsf{A} \coloneqq  \{ \mathord{\uparrow}, \mathord{\downarrow} , 0 \}$, defined in a recursive manner as follows: 
\begin{align}
\notag
e_{1}	&\coloneqq \mathord{\uparrow} \mathord{\downarrow}  \, ,  
\\ 
\label{e.next_word}
e_{k+1} &\coloneqq 
e_k^{k^2+1} \, 0 \, e_k \, 
\overline{e_k}^{k^2+1} \,  0 \, \overline{e_k} \, ,
\end{align}
where $\overline{e_k}$ denotes the word obtained from $e_k$ by switching the arrows.
Let $X$ be the subset of $\mathsf{A}^\Z$ formed by those bi-infinite sequences $\omega=(\omega_i)_{i\in\Z}$ 
such that for every $n > 0$, the word $\omega_{-n}\omega_{-n+1}\cdots\omega_n$ occurs as a subword (i.e., a factor) of some $e_k$.
According to \cite[Theorem~4.1]{mono}, 
$X$ is a closed, shift-invariant subset of $\mathsf{A}^\Z$, and if $f \colon X \to X$ is the restricted shift, then $f$ is a Veech-like map. 
Define a continuous matrix valued map 
\[
A(\omega) \coloneqq \begin{pmatrix} 0 & e^{\varphi(\omega)} \\ e^{-\varphi(\omega)} & 0 \end{pmatrix} 
\quad \text{where} \quad
\varphi(\omega) \coloneqq
\begin{cases}
\phantom{-}1	&\text{if } \omega_0 = \mathord{\uparrow} \, , \\ 
-1 				&\text{if } \omega_0 = \mathord{\downarrow} \, , \\
\phantom{-}0	&\text{if } \omega_0 = 0 \, .
\end{cases}
\]
Then \cite[Theorem~4.11]{mono} states that the corresponding cocycle $\mathcal{A}$ has nonzero Lyapunov exponents (with respect to the unique $f$-invariant measure), and it is not uniformly hyperbolic.
Note that for any $\omega \in X$ and $n>0$, 
\begin{align*}
\frac{1}{n}\log \left\|\mathcal{A}(\omega,n)  \right\| &= 
\frac{1}{n} \left| \sum_{j=0}^{n-1}(-1)^j \varphi(f^j(\omega)) \right| \, , \\ 
\frac{1}{n} \log \left\|\mathcal{A}(\omega,-n) \right\| &= 
\frac{1}{n} \left| \sum_{j=-n}^{-1}(-1)^j \varphi(f^j(\omega)) \right| \, .
\end{align*}
If $\omega$ is a LP-regular point, then the two sequences above must converge to the same limit as $n \to \infty$.
However, according to \cite[Lemma~4.12]{mono}, this common limit, whenever it exists, is a number $c>0$ independent of $\omega$.
Therefore all LP-regular points of our Walters cocycle have the same Lyapunov exponents $c$ and $-c$. 

At the same time, since $f$ is uniquely ergodic and $\mathcal{A}$ is not uniformly hyperbolic, we must be in case~\eqref{i:NUH} of Proposition~\ref{p.trichotomy}. Consequently, $\mathcal{A}$ cannot be completely regular (even though, as we saw above, all LP-regular points have the same spectra).
 \end{proof}


\bigskip

\noindent \textbf{Acknowledgements.} 
We express our thanks to Boris Kalinin and Federico Rodriguez Hertz for helpful discussions and to Mat\'{\i}as Ures for pointing out the missing requirement of uniform convergence in the definition of regularity. 
We thank the referee for their careful revision and suggestions. 
Y.P.\ was partially supported by NSF grant DMS 2153053.
O.S.\ was partially supported by ISF grant 264/22 and by Minerva grant ``Chaotic non-Anosov dynamical systems.''



\begin{thebibliography}{00} 

\bibitem{ABC} F.~Abdenur, C.~Bonatti, and S.~Crovisier, \emph{Nonuniform hyperbolicity for $C^1$-generic diffeomorphisms}, Israel J.\ Math.\ \textbf{183} (2011), 1--60. \doi{10.1007/s11856-011-0041-5}

\bibitem{Akin} E.~Akin, \emph{The general topology of dynamical systems},
Graduate Studies in Mathematics, vol.~1, American Mathematical Society, Providence, RI, 1993.

\bibitem{AAN} E.~Akin, J.~Auslander, and A.~Nagar, \emph{Variations on the concept of topological transitivity}, Stud.\ Math.\ \textbf{235} (2016), no.\ 3, 225--249. \doi{10.4064/sm8553-7-2016}

\bibitem{AY} V.M.~Alekseev and M.V.~Yakobson, \emph{Symbolic dynamics and hyperbolic dynamic systems}, Phys.\ Rep.\ \textbf{75} (1981), no.\ 5, 287--325. \doi{10.1070/RM1981v036n04ABEH003025}

\bibitem{Ar} L.~Arnold, \emph{Random dynamical systems}, Springer Monographs in Mathematics, Springer-Verlag, Berlin, 1998.

\bibitem{AD} A.~Avila and D.~Damanik, \emph{Dense phenomena for ergodic Schr\"odinger Operators: I. Spectrum, integrated density of states, and Lyapunov exponent},
\href{http://arxiv.org/abs/2506.14259}{arXiv:2506.14259 [math.SP]}

\bibitem{BP} L.~Barreira and Ya.~Pesin, \emph{Nonuniformly hyperbolicity: dynamics of systems with nonzero Lyapunov exponents}, Encyclopedia of Mathematics and Its Applications, Cambridge Univ.\ Press, 2007.

\bibitem{BlumLat} A.~Blumenthal and Yu.~Latushkin, \emph{The Selgrade decomposition for linear semiflows on Banach spaces}, J.\ Dyn.\ Diff.\ Equat.\ \textbf{31} (2019), 1427--1456. \doi{10.1007/s10884-018-9648-0}

\bibitem{mono} J.~Bochi, \emph{Monochromatic nonuniform hyperbolicity}, to appear in Trans.\ Amer.\ Math.\ Soc. \doi{10.1090/tran/9639}

\bibitem{BochiGaribaldi} J.~Bochi and E.~Garibaldi, \emph{Extremal norms for fiber-bunched cocycles}, J.\ \'Ec.\ polytech.\ Math.\ \textbf{6} (2019), 947--1004. \doi{10.5802/jep.109}

\bibitem{BochiGourmelon} J.~Bochi and N.~Gourmelon, \emph{Some characterizations of domination}, Math.\ Z.\ \textbf{63} (2009), no.\ 1, 221--231.  \doi{10.1007/s00209-009-0494-y}

\bibitem{BDV} C.~Bonatti, L.J.~D\'iaz, and M.~Viana, \emph{Dynamics beyond uniform hyperbolicity: a global geometric and probabilistic perspective}, Encyclopaedia of Mathematical Sciences, \textbf{102}, Springer, 2005.

\bibitem{BJ} T.~Bousch and O.~Jenkinson, \emph{Cohomology classes of dynamically non-negative $C^k$ functions}, Invent.\ Math.\ \textbf{148}, No.~1 (2002), 207--217. \doi{10.1007/s002220100194}

\bibitem{Brin} M.~Brin, \emph{H\"older continuity of invariant distributions},
A.~Katok et al.\ (eds.), Proceedings of the AMS Summer Research Institute held at the Univ.\ of Washington, 
Proceeding of Symposia in Pure Mathematics, \textbf{69}, Amer.\ Math.\ Soc., Providence, RI, 2001, 91--93.

\bibitem{CaVar} M.~Carvalho and P.~Varandas, \emph{Genericity of historic behavior for maps and flows}, Nonlinearity, \textbf{34} No.~10 (2021), 7030--7044. \doi{10.1088/1361-6544/ac1f77}

\bibitem{CCZ} Y.~Chen, Y.~Cao, and R.~Zou, \emph{Uniformity of singular value exponents for typical cocycles}, Stoch.\ Dyn.\ \textbf{23}, No.~4 (2023), Article ID 2350036. \doi{10.1142/S0219493723500363}

\bibitem{CZZ} Y.~Chen, Y.~Zang, and R.~Zou, \emph{Lyapunov irregular set of Banach cocycles}, Nonlinearity \textbf{36} (2023), no.\ 10, 5474--5497. \doi{10.1088/1361-6544/acf4dd}

\bibitem{ChiLat} C.~Chicone and Yu.~Latushkin, \emph{Evolution semigroups in dynamical systems and differential equations}, Mathematical Surveys and Monographs, \textbf{70},  Am.\ Math.\ Soc., Providence, RI, 1999.

\bibitem{CroPo} S.~Crovisier and R.~Potrie, \emph{Introduction to partially hyperbolic dynamics}, course notes, School on Dynamical Systems, ICTP, Trieste, 2015.

\bibitem{DL} D.~Damanik and D.~Lenz, \emph{Uniformity aspects of $\mathrm{SL}(2,\mathbb{R})$ cocycles and applications to Schr\"odinger operators defined over Boshernitzan subshifts},  
Ergodic Theory Dyn.\ Syst.\ \textbf{45}, No.~6, 1734--1756 (2025). \doi{10.1017/etds.2024.66}

\bibitem{Dani} S.G.~Dani, \emph{On uniformly distributed orbits of certain horocycle flows}, Ergodic Theory Dyn.\ Syst.\ \textbf{2} (1982), 139--158. \doi{10.1017/s0143385700001474}

\bibitem{DG} J.~DeWitt and A.~Gogolev, \emph{Dominated splitting from constant periodic data and global rigidity of Anosov automorphisms}, Geom.\ Funct.\ Anal.\ \textbf{34} (2024), 1370--1398. \doi{10.1007/s00039-024-00680-z}


\bibitem{DownWeiss} T.~Downarowicz and B.~Weiss, \emph{When all points are generic for ergodic measures}, Bull.\ Pol.\ Acad.\ Sci., Math.\ \textbf{68} (2020), no.\ 2, 117--132. \doi{10.4064/ba210113-15-1}

\bibitem{EP} R.~Ellis and W.~Perrizo, \emph{Unique ergodicity of flows on homogeneous spaces}, Israel J.\ Math. \textbf{29} (1978), no.\ 2--3, 276--284. \doi{10.1007/BF02762015}

\bibitem{FerreiraV} G.~Ferreira and P.~Varandas, \emph{Lyapunov non-typical behavior for linear cocycles through the lens of semigroup actions}, 
\href{http://arxiv.org/abs/2106.15676}{arXiv:2106.15676 [math.DS]}.

\bibitem{FisherH} T.~Fisher and B.~Hasselblatt, \textit{Hyperbolic flows}, Zurich Lectures in Advanced Mathematics, EMS Publishing House, Berlin, 2019.

\bibitem{Furman} A.~Furman, \emph{On the multiplicative ergodic theorem for uniquely ergodic systems}, Ann.\ Inst.\ Henri Poincar\'e, Probab.\ Stat.\ \textbf{33}, No.~6 (1997), 797--815. \doi{10.1016/S0246-0203(97)80113-6}

\bibitem{Gourmelon} N.~Gourmelon, \emph{Adapted metrics for dominated splittings}, Ergodic Theory Dyn.\ Syst.\ \textbf{27} (2007), no.~6, 1839--1849. \doi{10.1017/S0143385707000272}

\bibitem{Guy} M.~Guysinsky, \emph{Linear extensions with narrow Lyapunov spectrum}, unpublished manuscript.

\bibitem{Herman} M.R.~Herman, \emph{Construction d'un diff\'eomorphisme minimal d'entropie topologique non nulle}, Ergodic Theory Dyn.\ Syst.\ \textbf{1} (1981), 65--76. \doi{10.1017/s0143385700001164}


\bibitem{HornJ} R.A.~Horn and C.R.~Johnson, \emph{Matrix analysis}, 2\textsuperscript{nd} ed., Cambridge Univ.\ Press, Cambridge, 2013.

\bibitem{HLT}  X.~Hou, W.~Lin, and X.~Tian, \emph{Ergodic average of typical orbits and typical functions}, Discrete Contin.\ Dyn.\ Syst.\ \textbf{43}, no.\ 7 (2023), 2781--2811. \doi{10.3934/dcds.2023029}

\bibitem{John78} R.A.~Johnson, \emph{Minimal functions with unbounded integral}, Israel J.\ of Math., \textbf{31}, No.~2 (1978), 133--141. \doi{10.1007/BF02760544}

\bibitem{JPS} R.A.~Johnson, K.J.~Palmer, and G.R.~Sell, \emph{Ergodic properties of linear dynamical systems}, SIAM J.\ Math.\ Anal.\ \textbf{18} (1987), no.\ 1, 1--33. \doi{10.1137/0518001}

\bibitem{JohnZamp} R.A.~Johnson and L.~Zampogni, \emph{Remarks concerning the Lyapunov exponents of linear cocycles}, Rend.\ Ist.\ Mat.\ Univ.\ Trieste \textbf{44} (2012), 89--116.

\bibitem{Kalinin} B.~Kalinin, \emph{Livsic theorem for matrix cocycles}, Ann.\ of Math.\ \textbf{173}, No.~2 (2011), 1025--1042. \doi{10.4007/annals.2011.173.2.11}

\bibitem{KalSad} B.~Kalinin and V.~Sadovskaya, \emph{Cocycles with one exponent over partially hyperbolic systems}, Geom.\ Dedicata \textbf{167} (2013), 167--188. \doi{10.1007/s10711-012-9808-z}

\bibitem{Kocsard} A.~Kocsard, \emph{On cohomological $C^0$-(in)stability}, Bull.\ Braz.\ Math.\ Soc.\ \textbf{44}, no.\ 3 (2013), 489--495. \doi{10.1007/s00574-013-0023-9}

\bibitem{KLNS} S.~Kiriki, X.~Li, Y.~Nakano, and T.~Soma, \emph{Abundance of observable Lyapunov irregular sets}, Commun.\ Math.\ Phys.\ \textbf{391} (2022), no.\ 3, 1241--1269. \doi{10.1007/s00220-022-04337-6}

\bibitem{LinTian} W.~Lin and X.~Tian, \emph{Uniqueness of ergodic optimization of top Lyapunov exponent for typical matrix cocycles}, Proc.\ Am.\ Math.\ Soc.\ \textbf{151} (2023), no.\ 7, 3037--3047. \doi{10.1090/proc/16354}

\bibitem{Man} R.~Ma\~n\'e, \emph{Quasi-Anosov diffeomorphisms and hyperbolic manifolds}, Trans.\ Amer.\ Math.\ Soc.\ \textbf{229} (1977), 351--370. \doi{10.2307/1998515}

\bibitem{Mather} J.N.~Mather, \emph{Characterization of Anosov diffeomorphisms}, Indag.\ Math.\ \textbf{30} (1968), 479--483.

\bibitem{MdL} F.~Micena and R.~de la Llave, \emph{Lyapunov exponents everywhere and rigidity}, J.\ Dyn.\ Control Syst.\ \textbf{27} (2021), no.~4, 819--831. \doi{10.1007/s10883-021-09563-0}

\bibitem{Milnor} J.~Milnor, \textit{Dynamics in one complex variable}, 3\textsuperscript{rd}~ed., Annals of Mathematics Studies, \textbf{160}, Princeton Univ.\ Press, Princeton, NJ, 2006. 

\bibitem{Mor} I.D.~Morris, \emph{Mather sets for sequences of matrices and applications to the study of joint spectral radii}, Proc.\ Lond.\ Math.\ Soc.\ (3) \textbf{107}, No.~1 (2013), 121--150. \doi{10.1112/plms/pds080}

\bibitem{MOP} J.~Moulin Ollagnier and D.~Pinchon, \emph{Mesures quasi invariantes \`a d\'eriv\'ee continue},
C.~R.\ Acad.\ Sci.\ Paris S\'er.\ A \textbf{282} (1976), 1371--1373. 

\bibitem{Newhouse} S.~Newhouse, \emph{Hyperbolic limit sets}, Trans.\ Amer.\ Math.\ Soc., \textbf{167} (1972), 125--150. \doi{10.2307/1996131}

\bibitem{Oxtoby} J.C.~Oxtoby, \emph{Measure and category}, 2\textsuperscript{nd}~ed., Graduate Texts in Mathematics, \textbf{2}, Springer-Verlag, New York-Berlin, 1980.

\bibitem{Pesin} Ya.~Pesin, \emph{Lectures on partial hyperbolicity and stable ergodicity}, Zurich Lectures in Advanced Mathematics, EMS, Z\"urich, 2004.

\bibitem{PS} C.~Pugh and M.~Shub, \emph{Ergodic elements of ergodic actions}, Compos.\ Math.\ \textbf{23} (1971), 115--122.


\bibitem{Rag} M.S.~Raghunathan, \emph{A proof of Oseledec’s multiplicative ergodic theorem}, Israel J.\ Math.\ \textbf{32} (1979), no.\ 4, 356--362. \doi{10.1007/BF02760464}

\bibitem{Ratner} M.~Ratner, \emph{Raghunathan's topological conjecture and distributions of unipotent flows}, Duke Math.\ J.\ \textbf{63} (1991), no.\ 1, 235--280. \doi{10.1215/S0012-7094-91-06311-8}

\bibitem{Ruelle} D.~Ruelle, \emph{Thermodynamic formalism}, 2\textsuperscript{nd} edition, Cambridge Mathematical Library, Cambridge Univ.\ Press, Cambridge, 2004.

\bibitem{SS74} R.~Sacker and G.~Sell, \emph{A note on Anosov diffeomorphisms}, Bull.\ Amer.\ Math.\ Soc.\ \textbf{80} (1974) 278--280. \doi{10.1090/S0002-9904-1974-13460-9}

\bibitem{SS78} R.J.~Sacker and G.R.~Sell, \emph{A spectral theory for linear differential systems}, J.\ Differential Equations \textbf{27} (1978), no.\ 3, 320--358. \doi{10.1016/0022-0396(78)90057-8}

\bibitem{Sad24} V.~Sadovskaya, \emph{Linear cocycles over hyperbolic systems}, D.~Damjanovi\'{c} et al.\ (eds.), A vision for dynamics in the 21st century. The legacy of Anatole Katok, Cambridge Univ.\ Press, Cambridge, 2024, 384--433. 

\bibitem{Sch} S.~Schreiber, \emph{On growth rates of subadditive functions for semiflows}, J.\ Differ.\ Equations \textbf{148}, No.~2 (1998) 334--350. \doi{10.1006/jdeq.1998.3471}

\bibitem{Sel} J.~Selgrade, \emph{Isolated invariant sets for flows on vector bundles}, Trans.\ Amer.\ Math.\ Soc.\ \textbf{203} (1975) 359--390. \doi{10.2307/1997089}

\bibitem{Stu} R.~Sturman and J.~Stark, \emph{Semi-uniform ergodic theorems and applications to forced systems}, Nonlinearity \textbf{13}, No.~1 (2000), 113--143. \doi{10.1088/0951-7715/13/1/306}

\bibitem{Tian} X.~Tian, \emph{Nonexistence of Lyapunov exponents for matrix cocycles}, Ann.\ Inst.\ Henri Poincar\'e, Probab.\ Stat.\ \textbf{53}, No.~1 (2017), 493--502. \doi{10.1214/15-AIHP733}


\bibitem{Varandas} P.~Varandas, \emph{Some questions and remarks on Lyapunov irregular behavior for linear cocycles}, J.L.~Dias et al.\ (eds.), New trends in Lyapunov exponents, CIM Series in Mathematical Sciences, Springer, Cham, 2023, 161--179. \doi{10.1007/978-3-031-41316-2_8}

\bibitem{Veech69} W.A.~Veech, \emph{Strict ergodicity in zero dimensional dynamical systems and the Kronecker-Weyl theorem mod~$2$}, Trans.\ Am.\ Math.\ Soc.\ \textbf{140} (1969), 1--33. \doi{10.2307/1995120}

\bibitem{Walters} P.~Walters, \emph{Unique ergodicity and random matrix products}, L.~Arnold and V.~Wihstutz (eds.), Lyapunov exponents: Proceedings of a Workshop held in Bremen, 1984, Lect.\ Notes Math., \textbf{1186} (1986), 37--55. \doi{10.1007/BFb0076832}


\bibitem{Wen} L.~Wen, \emph{Differentiable dynamical systems. An introduction to structural stability and hyperbolicity}, Graduate Studies in Mathematics, vol.~173, American Mathematical Society, Providence, RI, 2016.


\bibitem{WMorris_groups} D.~Witte Morris, \emph{Introduction to arithmetic groups},
Deductive Press, 2015. \href{http://arxiv.org/abs/math/0106063}{arXiv:math/0106063 [math.DG]}



\end{thebibliography}
\end{document}